\documentclass[12pt]{amsart}
\usepackage{amsmath}
\usepackage{amsxtra}
\usepackage{amstext}
\usepackage{amssymb}
\usepackage{amsthm}
\usepackage{latexsym}
\usepackage{dsfont} 
\usepackage{verbatim}
\usepackage{tabls}
\usepackage{graphicx}
\usepackage{rotating}
\usepackage{caption}
\captionsetup{font={small,sf}, margin=1pt}


\theoremstyle{plain}
\newtheorem{thm}{Theorem}[section]
\newtheorem{cor}[thm]{Corollary}
\newtheorem{lem}[thm]{Lemma}
\newtheorem{prop}[thm]{Proposition}
\newtheorem{ques}[thm]{Question}

\theoremstyle{definition}
\newtheorem{defn}[thm]{Definition}

\newtheorem{rem}[thm]{Remark}

\newtheorem{cla}[thm]{Claim}

\numberwithin{equation}{section}
\setcounter{tocdepth}{1}

\newcommand{\ci}[1]{_{{}_{\scriptstyle{#1}}}}

\def\T1{\overline{T}}
\def\dist{\operatorname{dist}}
\def\supp{\operatorname{supp}}
\def\Lip{\operatorname{Lip}}
\def\eps{\varepsilon}

\def\diam{\operatorname{diam}}

\def\R{\mathbb{R}}

\def\XXint#1#2#3{{\setbox0=\hbox{$#1{#2#3}{\int}$}
     \vcenter{\hbox{$#2#3$}}\kern-.5\wd0}}


\begin{document}

\title[Reflectionless measures II]
{Reflectionless measures for Calder\'{o}n-Zygmund operators II:\\
Wolff potentials and rectifiability.}

\author[B. Jaye]
{Benjamin Jaye}
\address{Department of Mathematical Sciences,
Kent State University,
Kent, OH 44240, USA}
\email{bjaye@kent.edu}

\author[F. Nazarov]
{Fedor Nazarov}
\email{nazarov@math.kent.edu}

\date{\today}

\maketitle

\section{Introduction}

We continue our study of the reflectionless measures associated to an $s$-dimensional Calder\'{o}n-Zygmund operator (CZO) acting in $\mathbb{R}^d$ with $s\in (0,d)$. Here, our focus will be the study of CZOs that are \textit{rigid}, in the sense that they have few reflectionless measures associated to them.  Our goal is to prove that the rigidity properties of a CZO $T$ impose strong geometric conditions upon the support of any measure $\mu$ for which $T$ is a bounded operator in $L^2(\mu)$.  In this way, we shall reduce certain well-known problems at the interface of harmonic analysis and geometric measure theory to a description of reflectionless measures of singular integral operators.  What's more, we show that this approach yields promising new results.

Our rigidity results split into two cases, depending on whether the dimension of the CZO is integer or not.

\subsection{Ahlfors-David rigidity.}  We begin by describing a rigidity result that appeared in our previous paper \cite{JN1}.  In \cite{JN1}, we gave a new proof of the Mattila-Melnikov-Verdera theorem \cite{MMV}, which states that the support of an Ahlfors-David (AD) regular measure $\mu$ for which the associated Cauchy transform operator is bounded in $L^2(\mu)$ is uniformly rectifiable.    The key element of the proof was showing that the Cauchy transform is AD-\textit{rigid} in the sense that the only AD-regular reflectionless measures associated to it are of the form $c\mathcal{H}^1|L$ for a line $L$, and a constant $c>0$. \\

Amongst the results we want to present here is a generalization of this idea to general integer dimensional CZOs acting in $\mathbb{R}^d$. Fix $s\in \mathbb{Z}$.  We call an $s$-dimensional CZO $T$ AD-\textit{rigid} if every AD-regular reflectionless measure associated to it takes the form $c\mathcal{H}^s|L$ for an $s$-plane $L$.  (On the other hand, every measure of this form is reflectionless for any $s$-dimensional CZO $T$, and so this rigidity condition is that a CZO $T$ should have as few AD-regular reflectionless measures associated to it as possible.)

 We shall show that \textit{if $T$ is AD-rigid, and $\mu$ is an AD-regular measure for which $T$ is bounded in $L^2(\mu)$, then $\mu$ is $s$-uniformly rectifiable}.  More precisely, we shall show that one of the geometric criteria for $s$-uniform rectifiability given by David and Semmes \cite{DS} is satisfied under the rigidity assumption, see Proposition \ref{intcase}.

\subsection{Wolff rigidity.}  Fix $s\not\in \mathbb{Z}$.  A theorem of Vihtill\"{a} \cite{Vih} states that the $s$-dimensional Riesz transform, the CZO with kernel $K(x) = \tfrac{x}{|x|^{s+1}}, x\in \mathbb{R}^d$, cannot be bounded in $L^2(\mu)$ if $\mu$ has positive lower density on a set of positive $\mu$-measure, i.e.,
$$\mu\Bigl(\Bigl\{x\in \R^d:\liminf_{r\rightarrow 0}\frac{\mu(B(x,r))}{r^s}>0\Bigl\}\Bigl)>0.$$

Because of this, the condition of AD-regularity is too strong to develop an interesting theory of measures with bounded non-integer dimensional CZOs.  We shall therefore remove the lower bound condition in the definition of AD-regularity, and consider measures $\mu$ satisfying the growth condition \begin{equation}\label{growth}\mu(B(x,r))\leq r^s, \text{ for any }x\in \mathbb{R}^d \text{ and }r>0.\end{equation}

\begin{rem}\label{growthnecessary}  For a wide class of non-degenerate CZOs including the $s$-Riesz transform, the condition (\ref{growth}) is in fact a necessary condition for the CZO associated to a non-atomic measure $\mu$ to be bounded in $L^2(\mu)$, see for instance \cite{Dav1}.
\end{rem}

The question of most interest for non-integer dimensional CZOs is to find the correct quantitative version of Vihtill\"{a}'s theorem.  Following Mateu, Prat and Verdera \cite{MPV}, we introduce the Wolff potential of a measure.  For $p\in (0,\infty)$, the $p$-Wolff potential of $\mu$ is defined by\footnote{In the potential theory literature (e.g. \cite{AH}), our $p$-Wolff potential of $\mu$ would be denoted by $\mathbb{W}_{\tfrac{p(d-s)}{p+1},\tfrac{p+1}{p}}(\mu)$.},
\begin{equation}\label{powerwolff}\mathbb{W}_p(\mu)(x) = \int_0^{\infty}\Bigl(\frac{\mu(B(x,r))}{r^s}\Bigl)^p \frac{dr}{r}.
\end{equation}
The Mateu-Prat-Verdera criterion states that \textit{if $s\in (0,d)$ and $\mu$ is a measure that satisfies the condition
\begin{equation}\label{MPVcond}\int_{Q}\mathbb{W}_2(\chi_{Q}\mu)d\mu\leq \mu(Q) \text{ for every cube }Q\subset \R^d,
\end{equation}
then every CZO $T$ associated to $\mu$ is bounded in $L^2(\mu)$}.  We include a proof of this fact in Appendix \ref{squarewolffcond} for the benefit of the reader, as it is not readily found in the literature in the generality stated here.

\medskip

We are interested in the extent to which conditions such as the Mateu-Prat-Verdera condition (\ref{MPVcond}) are necessary for the $L^2(\mu)$ boundedness of a particular CZO $T$.

We declare that an $s$-dimensional CZO $T$ is \textit{Wolff}-\textit{rigid} if the only reflectionless measure associated to it satisfying the growth condition (\ref{growth}) is the zero measure.
\medskip

In Proposition \ref{thm1} below, we shall show that \emph{if a CZO $T$ is Wolff-rigid, then there exists $p\in (0,\infty)$, depending on $s$, $d$, and the regularity of the kernel of $T$, such that for every measure $\mu$ satisfying (\ref{growth}) for which $T$ is bounded in $L^2(\mu)$,  we have
$$\int_{B(x,r)}\mathbb{W}_p(\chi_{Q}\mu)d\mu\leq C\mu(Q) \text{ for every cube }Q\subset \R^d,
$$
where $C>0$ depends on $s$, $d$, and the operator norm of $T$.}

\subsection{The Riesz transform}

Our interest in Propositions \ref{intcase} and \ref{thm1} comes from certain well known questions regarding  the $s$-Riesz transform, the CZO with kernel $K(x) = \tfrac{x}{|x|^{s+1}}, x\in \mathbb{R}^d$.  Throughout this section, $\mu$ will denote a measure for which the associated $s$-Riesz transform operator bounded in $L^2(\mu)$.

David and Semmes \cite{DS} asked whether, in the case when $s\in \mathbb{Z}$ and $\mu$ is AD-regular measure, $\mu$ is  $s$-uniformly rectifiable.  This was settled when $s=1$ by Mattila, Melnikov, and Verdera \cite{MMV}, and when $s=d-1$ by Nazarov, Tolsa and Volberg \cite{NToV12}.  At the same time as \cite{NToV12}, a series of papers by Hofmann, Martel, Mayboroda and Uriate-Tuero \cite{HM, HMM, HMU} proved the result under an additional hypothesis on the support of $\mu$.  The cases $s=2, \dots, d-2$ remain open.
\medskip

Regarding the non-integer dimensional case, Mateu, Prat and Verdera \cite{MPV} proved that if $s\in (0,1)$ then $\mu$ satisfies (\ref{MPVcond}).  Thus, if $s\in (0,1)$, then the $s$-Riesz transform associated to $\mu$ is bounded in $L^2(\mu)$ if and only if (\ref{MPVcond}) holds.

This result is rather surprising due to the fact that the Riesz kernel is a sign-changing vector field whereas the Wolff potential has a positive kernel.  In particular, the estimate implies that the Calder\'{o}n-Zygmund capacity defined by the Riesz transform is equivalent to a certain positive non-linear capacity from potential theory.  

In \cite{MPV} (and elsewhere, for instance \cite{ENV, Tol11}) it was conjectured that (\ref{MPVcond}) should hold true for $s>1$, $s\not\in \mathbb{Z}$.  This conjecture is open for $s>1$\footnote{While this paper was in preparation, Reguera, Tolsa and the two authors proved this conjecture for $s\in (d-1,d)$ by combining the ideas developed here with those developed in the paper \cite{RT} mentioned below.}.  For $s\in (d-1,d)$, Eiderman, Nazarov and Volberg \cite{ENV} showed that the support of $\mu$ cannot have finite $s$-dimensional Hausdorff measure.  This is a qualitative version of the condition (\ref{MPVcond}).  For $s\in (1,d-1)$, $s\not\in \mathbb{Z}$ even showing that this qualitative property holds remains an open problem.


The sharp estimate (\ref{MPVcond}) was recently verified for $s\in (d-1,d)$ for measures supported on uniformly disconnected sets by Reguera and Tolsa \cite{RT}.   The problem is understood for all $s\in (0,d)$ if the measure is precisely the $s$-dimensional Hausdorff measure restricted to a Cantor-type set  \cite{Tol11,EV}.

In short, the results that are known for general measures split into two cases:   $s\in (0,1]$ and $s\in [d-1,d)$.  In the first case, the powerful Menger curvature formula, first introduced to the area by Melnikov, is available.  In the latter case, one can make use of a strong maximum principle for the operator $(-\Delta)^{\alpha}$ when $\alpha\leq 1$.

The main challenge is to come up with techniques that can apply to intermediate cases in which neither the Menger curvature formula, nor the strong maximum principle, is readily available.  It is our hope that reflectionless measures may provide such a tool.  As such, we pose the following question regarding the rigidity of the $s$-Riesz transform.

\begin{ques}\label{reflconj}  Is the $s$-Riez transform sufficiently rigid?  In other words, suppose that $\mu$ is a reflectionless measure for the $s$-Riesz transform satisfying (\ref{growth}).

\indent (a).   If $s\not\in \mathbb{Z}$, then is $\mu$ necessarily the zero measure?

\indent (b).  If $s\in \mathbb{Z}$, and $\mu$ is $s$-AD regular, then does $\mu$ coincide with a constant multiple of the $s$-dimensional Hausdorff measure restricted to an $s$-plane?
\end{ques}


Out of the two parts of this question, we are more confident that part (a) of the question should be correct as stated, and in this paper we verify that this is the case when $s\in (d-1,d)$ (Proposition \ref{onlytrivmeas}).  Combining this with the non-integer rigidity result mentioned above (Proposition \ref{thm1}) yields the following theorem.

\begin{thm}  Let $s\in (d-1,d)$.  There exists $p\in (0,\infty)$, depending on $s$, $d$, such that if $\mu$ is a non-atomic measure for which the associated $s$-Riesz transform operator is bounded in $L^2(\mu)$, then
$$\int_{Q}\mathbb{W}_{p}(\chi_{Q}\mu)d\mu\leq C\mu(Q), \text{ for every cube }Q\subset \R^d,
$$
for a constant $C>0$ depending on $s$, $d$, and the operator norm of the Riesz transform.
\end{thm}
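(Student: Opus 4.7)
The plan is to combine the two main structural propositions advertised in the introduction: Proposition \ref{thm1}, which converts Wolff-rigidity of a CZO into a Wolff potential inequality for every measure bounding it in $L^2$, together with Proposition \ref{onlytrivmeas}, which asserts that the $s$-Riesz transform is itself Wolff-rigid when $s\in (d-1,d)$. With these two ingredients in hand, the theorem is essentially a formal deduction; my job is mainly to verify that the hypotheses line up.

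First, I would reduce to the situation in which $\mu$ satisfies the growth bound (\ref{growth}) exactly, rather than merely up to a constant. Since $\mu$ is non-atomic and the $s$-Riesz transform is bounded in $L^2(\mu)$, Remark \ref{growthnecessary} (using the standard argument from \cite{Dav1}) yields a pointwise growth bound $\mu(B(x,r))\le C_0 r^s$ with $C_0$ depending only on $s$, $d$, and the operator norm of the Riesz transform. Replacing $\mu$ by $C_0^{-1}\mu$, the rescaled measure satisfies (\ref{growth}), and the $s$-Riesz transform is still bounded in $L^2$ of it with a comparable operator norm; both the left-hand and right-hand sides of the desired inequality scale the same way in $\mu$ (the integrand $\mathbb{W}_p(\chi_Q\mu)$ is homogeneous of degree $p$ in $\mu$ and $d\mu$ contributes one more power, matching $\mu(Q)$ after absorbing powers of $C_0$ into the constant $C$), so it suffices to establish the conclusion under (\ref{growth}).

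Second, Proposition \ref{onlytrivmeas} tells us that for $s\in (d-1,d)$ the $s$-Riesz transform is Wolff-rigid: any reflectionless measure for it satisfying (\ref{growth}) vanishes identically. This is the non-trivial geometric input that is specific to the codimension $<1$ regime (where a maximum principle for $(-\Delta)^{\alpha}$ with $\alpha\le 1$ is available, as alluded to in the discussion above).

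Third, Proposition \ref{thm1} applies to any Wolff-rigid $s$-dimensional CZO and produces a Wolff potential inequality
\[
\int_{Q}\mathbb{W}_{p}(\chi_{Q}\mu)\,d\mu\le C\,\mu(Q)
\]
with $p\in (0,\infty)$ depending on $s$, $d$, and the regularity of the kernel of $T$, and with $C$ depending also on the operator norm of $T$. Since the Riesz kernel $K(x)=x/|x|^{s+1}$ has dimension-free regularity bounds depending only on $s$ and $d$, the exponent $p$ depends only on $s$ and $d$, and $C$ only on $s$, $d$, and the operator norm. Undoing the rescaling at the end affects only the constant $C$, completing the proof.

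The only real obstacle in this deduction is the verification that Proposition \ref{thm1} genuinely applies to the Riesz kernel with kernel-regularity constants controlled by $s,d$ alone, so that the resulting $p$ is an absolute function of $s$ and $d$; once that bookkeeping is done, the theorem follows immediately from the two propositions.
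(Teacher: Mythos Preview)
Your proposal is correct and follows exactly the approach the paper intends: the theorem is stated in the introduction as an immediate combination of Proposition \ref{onlytrivmeas} (Wolff-rigidity of the $s$-Riesz transform for $s\in(d-1,d)$) and Proposition \ref{thm1} (Wolff-rigidity implies the Wolff potential inequality), together with Remark \ref{growthnecessary} to obtain the niceness condition. Your additional bookkeeping on the rescaling and the dependence of constants is valid, though note that Proposition \ref{thm1} already applies to $\Lambda$-nice measures with $C$ depending on $\Lambda$, so the rescaling step is not strictly necessary.
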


As we have already mentioned, in a subsequent paper written in collaboration with Reguera and Tolsa, the sharp exponent $p=2$ is proved.  We would like to emphasize that the proof in that paper builds upon, and so does not supersede, what is done here.


We are a long way from answering Question \ref{reflconj} either positively or negatively, but  at least we can say that reflectionless measures for the Riesz transform have some special structure.  More precisely, we show that for any $s\in (0,d)$, a reflectionless measure for the $s$-Riesz transform satisfying (\ref{growth}) has

\begin{itemize}
\item nowhere dense support (Section \ref{porous}), and
\item infinite energy in the sense that $\iint_{\R^d\times\R^d}\frac{1}{|x-y|^{s-1}}d\mu(x)d\mu(y)=\infty$ (Section \ref{behaviourinfinity}).
\end{itemize}

Neither property is true for a general CZO.  For instance, the two dimensional Lebesgue measure on the unit disc is a reflectionless measure for the 1-dimensional CZO with kernel $\tfrac{\overline{z}}{z^2}$ in $\mathbb{C}$, see \cite{JN2} or Part III of this series.


\section{Preliminaries}  

\subsection{General notation}
\begin{itemize}
\item By a measure, we shall always mean a non-negative locally finite Borel measure.  For a measure $\mu$, $\supp(\mu)$ denotes its closed support.  The $d$-dimensional Lebesgue measure is denoted by $m_d$.
\item For $\Lambda >0$, we say that a measure $\mu$ is  $\Lambda$-\textit{nice} if $\mu(B(x,r))\leq \Lambda r^s$ for every ball $B(x,r)\subset \mathbb{R}^d$.  A measure is nice if it is $\Lambda$-nice for some $\Lambda>0$.\footnote{Of course, we could always renormaize a measure so that if it is nice, then the growth condition (\ref{growth}) holds, but we will be renormalizing measures in a variety of ways, and so it makes some sense to keep track of this parameter.}
\item A measure $\mu$ is called $\Lambda$-\textit{Ahlfors-David (AD)-regular} if it is $\Lambda$-nice, and also $\mu(B(x,r))\geq \tfrac{1}{\Lambda}r^s$ for every $x\in \supp(\mu)$ and $r>0$. A measure is AD-regular if it is $\Lambda$-AD regular for some $\Lambda>0$.
\item For two scalar (complex) valued functions $f,g\in L^2(\mu)$, we define
$$\langle f,g\rangle_{\mu} = \int_{\mathbb{R}^d}  f  g \,d\mu,
$$
In the event that one of the two functions (say $f$) is $\mathbb{C}^{d'}$ valued, we shall write $\langle f,g\rangle_{\mu}$ to mean the vector with components $\langle f_j,g\rangle_{\mu} $, where $f_j$ are the components of $f$.
\item A function $f$ (either scalar or vector valued) is called Lipschitz continuous if
$$\|f\|_{\Lip} = \sup_{x,y\in \mathbb{R}^d,\; x\neq y} \frac{|f(x)-f(y)|}{|x-y|}<\infty.
$$
\item For an open set $U\subset \mathbb{R}^d$, $\Lip_0(U)$ denotes the set of Lipschitz continuous functions that are compactly supported in $U$.
\item We denote by $\mathcal{D}$ a lattice of \textit{open dyadic cubes} in $\mathbb{R}^d$.  (Our approach involves several limiting operations in which lattices will be shifted and rescaled, so we shall always be dealing with some dyadic lattice, rather than the standard one.)
\item  We introduce a graph structure $\Gamma(\mathcal{D})$ on a dyadic lattice $\mathcal{D}$ by connecting each dyadic cube with an edge to its children, and all neighbouring cubes of the same sidelength.  The graph distance on $\mathcal{D}$, denoted by $d(Q, Q')$, is the shortest path from $Q$ to $Q'$ in the graph $\Gamma(\mathcal{D})$.  This graph has vertex degree bounded by $2^d+2d+1$.
\item The \emph{density} of a measure $\mu$ at a cube $Q$ (not necessary dyadic) is denoted by $$D_{\mu}(Q)=\frac{\mu(Q)}{\ell(Q)^s}.$$We shall just write $D(Q)$ if the underlying measure is clear from the context.
\end{itemize}



\subsection{$s$-dimensional Calder\'{o}n-Zygmund operators}\label{CZOSec}  We recall that an $s$-dimensional CZ kernel is a function $K:\mathbb{R}^d\backslash\{0\}\rightarrow \mathbb{C}^{d'}$ satisfying

(i)  $|K(x)|\leq \tfrac{1}{|x|^s}$, for $x\in \mathbb{R}^d\backslash\{0\}$.

(ii) $K(-x) = -K(x)$ for $x\in \mathbb{R}^d\backslash\{0\}$, \text{ and }

(iii)  For some $\alpha\in (0,1]$, the function $x\rightarrow |x|^{s+\alpha}K(x)$ is H\"{o}lder continuous of order $\alpha$.

Throughout the paper, we shall be interested in homogeneous CZ kernels, and so we impose the following addition condition.

(iv)  $K(\lambda x) = \lambda^{-s}K(x)$ for $\lambda>0$.

Fix a CZ-kernel $K$.  For $\delta>0$, the regularized CZ kernel is defined by
$$K_{\delta}(x) = K(x)\Bigl(\frac{|x|}{\max(|x|,\delta)}\Bigl)^{s+\alpha}, \,x\in \mathbb{R}^d\backslash\{0\},
$$
and $K_{\delta}(0)=0$.

Notice that $|K_{\delta}(x)|\leq \tfrac{1}{\delta^s}$ for all $x\in \mathbb{R}^d$.

If $\mu$ is a $\Lambda$-nice measure, then the Cauchy-Schwarz inequality ensures that the regularized CZO transform
$$T_{\mu,\delta}(f)(x) = \int_{\mathbb{R}^d} K_{\delta}(x-y)f(y) d\mu(y), \; x\in \mathbb{R}^d,
$$
is uniformly bounded pointwise in absolute value in terms of $\delta$, $\Lambda$, and $\|f\|_{L^2(\mu)}$.  In particular, $T_{\mu,\delta}:L^2(\mu)\rightarrow L^2_{\text{loc}}(\mu)$ for any $\delta>0$.

We say that nice measure $\mu$ has associated CZO $T$ bounded in $L^2(\mu)$ if
\begin{equation}\label{opnorm}\|T\|_{\mu}:=\sup_{\delta>0}\| T_{\mu,\delta}\|_{L^2(\mu)\rightarrow L^2(\mu)}<\infty.
\end{equation}


\subsection{Reflectionless Measures}\label{reflmeas}

We briefly recall the definition of a reflectionless measure.  A more thorough description is given in Section 3 of Part I.

Fix a CZO $T$. We recall that a measure $\mu$ is said to be \textit{diffuse} if the function $(x,y)\rightarrow \tfrac{1}{|x-y|^{s-1}}\in L^1_{\text{loc}}(\mu\times\mu)$.  For a diffuse measure $\mu$, and for $f,\varphi \in \Lip_0(\mathbb{R}^d)$, we may define
$$\langle T(f\mu),\varphi \rangle_{\mu}=  \iint_{\mathbb{R}^d\times\mathbb{R}^d}K(x-y)H_{f,\varphi}(x,y)d\mu(x)d\mu(y),
$$
where
$$H_{f,\varphi} = \frac{1}{2}\bigl[f(y)\varphi(x) - \varphi(y) f(x)\bigl].
$$
If in addition $\mu$ has restricted growth at infinity, in the sense that $\int_{|x|\geq 1}\tfrac{1}{|x|^{s+\alpha}}d\mu(x)<\infty$, then we may define the pairing $\langle T(f\mu),\varphi \rangle_{\mu}$ when $f \in \Lip_0(\mathbb{R}^d)$ satisfies $\int_{\mathbb{R}^d}f\,d\mu=0$, and $\varphi$ is merely a bounded Lipschitz function.  To do this, fix $\psi\in \Lip_0(\mathbb{R}^d)$ that is identically equal to $1$ on a neighbourhood of the support of $f$, and set
$$\langle T(f\mu),\varphi \rangle_{\mu} = \langle T(f\mu),\psi\varphi \rangle_{\mu}+ \int_{\mathbb{R}^d}T(f\mu)(x)[1-\psi(x)]\varphi(x) \,d\mu(x).$$
The mean zero property of $f$ ensures that $|T(f\mu)(x)|\leq \tfrac{C_{f,\psi}}{(1+|x|)^{s+\alpha}}$ for $x\in \supp(1-\psi)$, from which the restricted growth at infinity ensures that the integral in the second term converges absolutely.\\

We say that a diffuse measure $\mu$ with restricted growth at infinity is \textit{reflectionless} for $T$ if
$$\langle T(f\mu),1 \rangle_{\mu}=0\text{ for every }f \in \Lip_0(\mathbb{R}^d)\text{ satisfying } \int_{\mathbb{R}^d}f\,d\mu=0.
$$

\subsection{The linear operator $T_{\mu}$}

Suppose that $\mu$ is a $\Lambda$-nice measure for which the associated CZO $T$ is bounded in $L^2(\mu)$.

For $f,\varphi \in \Lip_0(\mathbb{R}^d)$, we have that satisfies $H_{f,\varphi}\in \Lip_0(\mathbb{R}^d\times\mathbb{R}^d)$, and $H_{f,\varphi}(x,x)=0.$  Thus, for any $\delta>0$,
$$|K_{\delta}(x-y)H_{f,\varphi}(x,y)|\leq\frac{C(f,\varphi)}{|x-y|^{s-1}}\chi_{S\times S}(x,y),
$$
where $S\supset \supp(f)\times\supp(\varphi)$.  Now, note that $K_{\delta}(x-y)H_{f,\varphi}(x,y)$ converges to $K(x-y)H_{f,\varphi}(x,y)$ outside the set $\{(x,y)\in \mathbb{R}^d\times\mathbb{R}^d:x=y\}$.  Insofar as the measure $\mu$ is nice, this set has zero $\mu\times\mu$ measure, and also $\tfrac{\chi_S(x)\chi_S(y)}{|x-y|^{s-1}}\in L^1(\mu\times\mu)$.  Consequently, the dominated convergence theorem ensures that
$$\langle T_{\mu, \delta}(f),\varphi \rangle_{\mu} = \iint_{\mathbb{R}^d\times\mathbb{R}^d} K_{\delta}(x-y)H_{f,\varphi}(x,y) d\mu(x)d\mu(y)
$$
converges as $\delta\rightarrow 0$ to
$$ \iint_{\mathbb{R}^d\times\mathbb{R}^d}K(x-y)H_{f,\varphi}(x,y)d\mu(x)d\mu(y)=\langle T(f\mu),\varphi \rangle_{\mu}.
$$
Furthermore, since $|\langle T_{\mu, \delta}(f),\varphi \rangle_{\mu} |\leq C_0\|f\|_{L^2(\mu)}\|\varphi\|_{L^2(\mu)}$ for every $\delta>0$,
$$|\langle T(f\mu),\varphi \rangle_{\mu} |\leq C_0\|f\|_{L^2(\mu)}\|\varphi\|_{L^2(\mu)}.
$$
Consequently, by the Riesz-Fisher theorem, these exists a unique bounded linear operator $T_{\mu}:L^2(\mu)\rightarrow L^2(\mu)$ such that
$$\langle T_{\mu}(f), \varphi \rangle_{\mu} = \langle T(f\mu), \varphi \rangle_{\mu} \text{ whenever }f,\varphi\in \Lip_0(\mathbb{R}^d).
$$

\subsection{Uniform rectifiability and the local convexity condition}

Now fix $s\in \mathbb{Z}$.  In this section we recall some of the language and results from David and Semmes \cite{DS}.

We say that an AD-regular measure $\mu$ is \textit{uniformly rectifiable} if there exists $M>0$ such that for every cube $Q\in \mathcal{D}$, there is a Lipschitz mapping $F_Q:\mathbb{R}^s\rightarrow \mathbb{R}^d$ with $\|F_Q\|\leq M\ell(Q)$ and $\mu(F_Q(\R^s)\cap Q)\geq \tfrac{1}{2}\mu(Q)$.

We shall now recall one of the criterion for uniform rectifiability given in \cite{DS}.  Among the several equivalent conditions for uniform rectifiability given in \cite{DS}, the most convenient condition to work with when taking the weak limit of a sequence of measures appears to be the \textit{Local Weak Convexity} (LCV) condition:

Fix $\delta>0$.   For an AD-regular measure $\mu$, we say that a dyadic cube $Q\in \mathcal{D}$ is $\delta$-non-LCV if
\begin{equation}\begin{split}&\text{there exist points }x,y\in \overline{3Q}\cap\supp(\mu)\\& \text{ such that } B(\tfrac{x+y}{2},\delta\ell(Q))\cap \supp(\mu)=\varnothing.
\end{split}\end{equation}

According to Corollary 2.10 of Chapter 1 in \cite{DS} the is uniformly rectifiablity of $\supp(\mu)$ is equivalent to the fact that for each $\delta>0$, the family of $\delta$-non LCV dyadic cubes is a \textit{Carleson family}, i.e., for each $\delta>0$, there exists $C_{\delta}>0$ such that for every $P\in \mathcal{D}$
$$\sum_{\substack{Q\in \mathcal{D}: Q\subset P,\\Q \text{ is }\delta-\text{non LCV}}}\ell(Q)^s\leq C_{\delta}\ell(P)^s.
$$

\subsection{Stabilization of Dyadic Lattices}\label{stabdyadic}

We say that a sequence of dyadic lattices $\mathcal{D}_k$ stabilizes tn a dyadic lattice $\mathcal{D}'$ if every $Q'\in \mathcal{D}'$ lies in $\mathcal{D}_k$ for sufficiently large $k$.

\begin{lem} Suppose $\mathcal{D}_k$ is a sequence of dyadic lattices with $Q_0=(0,1)^d\in \mathcal{D}_k$ for all $k$.  Then there exists a subsequence of the lattices that stabilizes to some lattice $\mathcal{D}'$.
\end{lem}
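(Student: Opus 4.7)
The plan is to exploit the fact that requiring $Q_0=(0,1)^d \in \mathcal{D}_k$ determines the lattice $\mathcal{D}_k$ at all scales of sidelength at most $1$ uniquely, and leaves only finitely many choices at each larger scale. A Cantor-type diagonalization then extracts a stabilizing subsequence.

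First I would observe that if $Q_0 \in \mathcal{D}_k$, then $\mathcal{D}_k$ contains the standard unit grid $\{z + (0,1)^d : z \in \mathbb{Z}^d\}$ at scale $1$ (this is the unique tiling of $\mathbb{R}^d$ by open unit cubes that contains $Q_0$), and hence all of its dyadic bisections at smaller scales. So the portion of $\mathcal{D}_k$ consisting of cubes of sidelength at most $1$ is the same for every $k$, and we only need to stabilize the cubes of sidelength $2^n$ for $n \geq 1$. For each such $n$, let $P_n^{(k)}$ denote the unique cube of $\mathcal{D}_k$ of sidelength $2^n$ that contains $Q_0$; then the cubes of $\mathcal{D}_k$ of sidelength $2^n$ are precisely the translates $\{P_n^{(k)} + 2^n z : z \in \mathbb{Z}^d\}$, so the entire lattice $\mathcal{D}_k$ is determined by the nested sequence $(P_n^{(k)})_{n \geq 1}$.

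Next, I note that $P_n^{(k)}$ must be one of the $2^d$ dyadic cubes of sidelength $2^n$ having $P_{n-1}^{(k)}$ (or $Q_0$, when $n=1$) as a child quadrant. By iterating, $P_n^{(k)}$ takes values in a finite set of cardinality $2^{nd}$. A standard diagonal extraction now does the job: by the pigeonhole principle, pass to a subsequence along which $P_1^{(k)}$ equals some fixed cube $P_1'$; from that, extract a further subsequence on which $P_2^{(k)} = P_2'$; and so on. The diagonal subsequence has the property that for every fixed $n$, eventually $P_n^{(k)} = P_n'$. The nested family $(P_n')_{n \geq 1}$ together with the standard unit grid at scale $1$ (and its bisections) assembles into a dyadic lattice $\mathcal{D}'$, since the nesting $P_n' \subset P_{n+1}'$ is inherited from the approximating lattices, and by construction the diagonal subsequence stabilizes to $\mathcal{D}'$ in the sense of the definition above.

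The only point that requires any real care is the bookkeeping in the first step: identifying precisely the data that parametrize a dyadic lattice containing $Q_0$, and verifying that the only ambiguity lies in choosing, at each scale $2^n$ with $n \geq 1$, one of the $2^d$ dyadic super-cubes of $P_{n-1}^{(k)}$. Once this finiteness-per-scale observation is in place, the diagonal extraction is entirely routine, and I expect no substantive obstacle.
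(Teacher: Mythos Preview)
Your proposal is correct and follows essentially the same approach as the paper's proof: the paper simply observes that for each $n\geq 0$ there are only $2^{nd}$ distinct ways to arrange a dyadic lattice so that $Q_0$ sits inside an ancestor of sidelength $2^n$, and then invokes a standard diagonal argument. You have supplied exactly this argument with the bookkeeping made explicit, so there is nothing to add.
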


The lemma is proved via a standard diagonal argument:  For $n\geq0$, there are only $2^{nd}$ distinct ways to arrange a dyadic lattice so that $Q_0$ is a child of a cube of sidelength $2^n$.

\section{Main Results}

Having introduced the required notation and concepts, we can list our main results.  Firstly, the integer dimensional rigidity result.

\begin{prop}\label{intcase}  Let $s\in \mathbb{Z}$, $s\in (0,d)$.  Suppose that $T$ is an $s$-dimensional CZO, and that the only $s$-AD regular reflectionless measures associated to $T$ 
are of the form $c\mathcal{H}^1|L$  for a constant $c>0$ and an $s$-plane $L$.

If $\mu$ is an $s$-AD regular measure for which $T$ is bounded in $L^2(\mu)$, then for every $\delta>0$, the family of dyadic $\delta$-non LCV  cubes is a Carleson family.
\end{prop}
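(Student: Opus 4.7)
The plan is a compactness-and-contradiction argument along the lines developed in Part~I for the case $s=1$. Suppose, for contradiction, that the family of $\delta_0$-non LCV cubes is not Carleson for some $\delta_0>0$. Then there exists a sequence of dyadic cubes $P_N\in\mathcal{D}$ with
$$\sum_{\substack{Q\subset P_N \\ Q \ \delta_0\text{-non LCV}}} \ell(Q)^s \ \geq \ N\,\ell(P_N)^s,$$
and by a standard stopping-time extraction one may further assume that the analogous density on proper subcubes of $P_N$ stays controlled. The goal is to produce from this sequence a blow-up limit that is $s$-AD regular and reflectionless for $T$, yet whose support cannot be an $s$-plane, contradicting the rigidity hypothesis.

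The next step is to rescale. For each $N$, let $\mu_N$ denote the pushforward of $\mu$ under the affine map that sends $P_N$ to the unit cube $Q_0 = (0,1)^d$, normalized by $\ell(P_N)^s$. Each $\mu_N$ is $\Lambda$-AD regular with the same $\Lambda$ as $\mu$, and $\|T\|_{\mu_N} = \|T\|_{\mu}$ by the homogeneity of $K$. The rescaled dyadic lattice contains $Q_0$; applying the stabilization lemma of Section~\ref{stabdyadic} and Banach--Alaoglu, one passes to a subsequence along which the lattices stabilize to a lattice $\mathcal{D}'$ and $\mu_N \rightharpoonup \mu_\infty$ weakly-$*$, with $\mu_\infty$ also $\Lambda$-AD regular. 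A pigeonhole on scales inside $P_N$, exploiting the non-Carleson failure, allows a second rescaling so that a distinguished dyadic cube $Q^* \in \mathcal{D}'$ at unit scale is $\tfrac{\delta_0}{2}$-non LCV for $\mu_\infty$; the preservation of this property in the limit follows from the local Hausdorff convergence $\supp(\mu_N) \to \supp(\mu_\infty)$ guaranteed by uniform AD-regularity.

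The decisive step is to verify that $\mu_\infty$ is reflectionless for $T$. Using the uniform $L^2(\mu_N)$-boundedness of $T$ together with the restricted growth at infinity of the $\mu_N$, the generalized pairings $\langle T(f\mu_\infty), 1\rangle_{\mu_\infty}$ defined in Section~\ref{reflmeas} can be obtained as limits of the corresponding pairings for $\mu_N$, and the tail decay of $T(f\mu_N)$ for mean-zero $f$ forces these limits to vanish---this is the blow-up machinery introduced in Part~I. Granted reflectionlessness, the rigidity hypothesis forces $\mu_\infty = c\mathcal{H}^s|L$ for some $s$-plane $L$ and $c>0$. But then for any $x,y \in \overline{3Q^*}\cap L$, the midpoint $(x+y)/2$ lies in $L=\supp(\mu_\infty)$, so no ball of radius $\tfrac{\delta_0}{2}\ell(Q^*)$ centered at $(x+y)/2$ can miss $\supp(\mu_\infty)$. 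Thus $Q^*$ cannot be non-LCV for $\mu_\infty$, contradicting the construction.

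The main obstacle I anticipate is the production of a reflectionless blow-up. Weak convergence of $\mu_N$ with uniformly bounded associated CZOs does not by itself ensure that the limit is reflectionless; one must control the near-field behavior through the uniform $L^2$-bound and the far-field behavior through the decay $|T(f\mu_N)(x)|\leq C_{f}(1+|x|)^{-s-\alpha}$ valid for mean-zero $f$, and one must arrange the blow-up centers so that the cancellation identity $\langle T(f\mu_\infty),1\rangle_{\mu_\infty}=0$ survives. Coordinating this extraction with the pigeonholing that keeps a $\delta_0$-non LCV cube alive at the macroscopic scale of the limit is the technical heart of the proof, and is the step where the non-Carleson failure is essentially used.
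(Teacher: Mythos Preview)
Your argument has a genuine gap at the step you yourself flag as the main obstacle: the claim that the blow-up limit $\mu_\infty$ is reflectionless. A weak limit of $\Lambda$-AD regular measures with uniformly bounded associated CZO is \emph{not} reflectionless in general. Indeed, the trivial rescaling of $\mu$ itself (which is certainly not reflectionless unless its support is already a plane) is such a limit. Your justification---that ``the tail decay of $T(f\mu_N)$ for mean-zero $f$ forces these limits to vanish''---conflates two different things: the tail decay makes the pairing $\langle T(f\mu_N),1\rangle_{\mu_N}$ \emph{well-defined}, but it says nothing about its size. That pairing equals $-\langle T_{\mu_N}(1), f\rangle_{\mu_N}$, which has no reason to tend to zero; the $L^2(\mu_N)$-boundedness of $T$ only bounds it by $\|T\|_{\mu}\,\mu_N(\mathbb{R}^d)^{1/2}\|f\|_{L^2(\mu_N)}$. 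The convergence lemma you are implicitly invoking (Lemma~\ref{smalldiff}) requires as \emph{input} that the pairings $\langle T(\psi\mu_N),1\rangle_{\mu_N}$ go to zero, and you have no mechanism producing that smallness from the failure of the Carleson condition.

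The paper avoids this by inserting an intermediary quantity, the Lipschitz oscillation coefficient $\Theta_\mu^A(Q)=\sup_{\psi\in\Phi_A^\mu(Q)}|\langle T(\psi\mu),1\rangle_\mu|$. The Carleson estimate is obtained \emph{quantitatively} from the Riesz-system inequality $\sum_Q \Theta_\mu^A(Q)^2/\mu(B(x_Q,3A\ell(Q)))\leq C\|T_\mu(\varphi)\|_{L^2(\mu)}^2$ (for a suitable localizing $\varphi$), once one knows that $\Theta_\mu^A(Q)\geq \Delta\,\ell(Q)^s$ at every $\delta$-non LCV cube. It is \emph{this lower bound} that is proved by compactness-and-contradiction: if it fails along a sequence, then by construction the oscillation coefficients tend to zero, which is precisely the hypothesis of Lemma~\ref{smalldiff} needed to produce a reflectionless limit. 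In other words, the compactness argument is not run on the Carleson packing directly, but on the pointwise lower bound for $\Theta$, and it is the assumed smallness of $\Theta$ that supplies the vanishing pairings you are missing.
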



The second result is the non-integer rigidity result.  Formally, there is no need in this case to impose the requirement that $s$ be non-integer, but in order for the hypothesis on $T$ to be satisfied, $s$ cannot be an integer.  Recall the Wolff potential (\ref{powerwolff}) from the introduction.

\begin{prop}\label{thm1}  Let $s\in (0,d)$.  Suppose that $T$ is an $s$-dimensional CZO, and that the only nice reflectionless measure for $T$ is the zero measure.

There exists $p\in (0,\infty)$, depending on $d$, $s$, and $\alpha$, such that any $\Lambda$-nice measure $\mu$ with associated CZO transform $T$ bounded in $L^2(\mu)$ satisfies
\begin{equation}\label{powerwolffest}\int_{Q}\mathbb{W}_{p}(\chi_{Q}\mu)d\mu\leq C\mu(Q) \text{ for any cube }Q\subset \R^d,
\end{equation}
for a constant $C>0$ depending on $s$, $d$, $\alpha$, $\Lambda$, and the $\|T_{\mu}\|_{L^2(\mu)\to L^2(\mu)}$.
\end{prop}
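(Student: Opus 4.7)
The approach is a compactness and blow-up argument: from a sequence of hypothetical counterexamples I would extract a non-trivial $\Lambda$-nice measure that is reflectionless for $T$, contradicting Wolff-rigidity.

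I fix $p$ (to be chosen large depending on $d$, $s$, $\alpha$) and argue by contradiction. If the conclusion fails, there are $\Lambda$-nice measures $\mu_k$ with $\|T\|_{\mu_k}\leq C_0$ and cubes $Q_k\subset\R^d$ such that
\[
\int_{Q_k}\mathbb{W}_p(\chi_{Q_k}\mu_k)\,d\mu_k\;\geq\;N_k\,\mu_k(Q_k),\qquad N_k\to\infty.
\]
Since the kernel is $(-s)$-homogeneous, the rescaled measures $\ell(Q_k)^{-s}(\phi_k)_*\mu_k$, with $\phi_k$ affinely sending $Q_k$ onto $Q_0=(0,1)^d$, are again $\Lambda$-nice with the same norm bound and the same divergent ratio, so I may assume $Q_k=Q_0$. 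The danger at this stage is that $\mu_k(Q_0)\to0$, if the Wolff blow-up lives at arbitrarily small scales near one point. This is dealt with by a stopping-time/pigeonhole step: writing the Wolff integral as a comparable dyadic sum $\sum_{R\in\mathcal{D}_k,\,R\subset Q_0}D_{\mu_k}(R)^p\,\mu_k(R)$, divergence of the ratio forces the existence of a descendant $Q_k'\subset Q_0$ with $D_{\mu_k}(Q_k')$ close to the maximal value $\Lambda$ and with the Wolff ratio on $Q_k'$ still divergent. Rescaling $Q_k'$ to $Q_0$, I obtain a sequence (still denoted $\mu_k$) of $\Lambda$-nice measures with $\|T\|_{\mu_k}\leq C_0$, $\mu_k(Q_0)\geq c_0>0$, and $\int_{Q_0}\mathbb{W}_p(\chi_{Q_0}\mu_k)\,d\mu_k\to\infty$.

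Passing to a subsequence, $\mu_k\rightharpoonup\mu_\infty$ in the weak-$*$ topology on $\R^d$. Niceness, restricted growth at infinity, and the lower bound $\mu_\infty(\overline{Q_0})\geq c_0$ all survive the limit; and the uniform bound $\|T\|_{\mu_k}\leq C_0$ transfers to $\|T\|_{\mu_\infty}\leq C_0$ by testing against $\Lip_0$-pairs and using the dominated convergence argument of Section~\ref{reflmeas}. I would then verify that $\mu_\infty$ is reflectionless: for $f,\varphi\in\Lip_0(\R^d)$,
\[
\langle T(f\mu_k),\varphi\rangle_{\mu_k}=\iint K(x-y)H_{f,\varphi}(x,y)\,d\mu_k(x)d\mu_k(y),
\]
and the integrand is controlled by $C_{f,\varphi}|x-y|^{-(s-1)}\chi_{S\times S}$ on a fixed compact $S\supset\supp f\times\supp\varphi$. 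Splitting into contributions at distance $\geq\delta$ and $<\delta$ from the diagonal, the former passes to the corresponding $\mu_\infty\times\mu_\infty$ integral by weak-$*$ convergence of the product measures, while the latter is controlled uniformly in $k$ by Cauchy--Schwarz and the $L^2$ bound on $T_{\mu_k,\delta'}$ (recovering the diffuseness of $\mu_\infty$ in the limit). Setting $\varphi\equiv1$ and $f$ mean-zero with respect to $\mu_\infty$, and using a $\Lip_0$ cut-off combined with restricted growth at infinity to interpret the pairing, gives $\langle T(f\mu_\infty),1\rangle_{\mu_\infty}=0$. By Wolff-rigidity, $\mu_\infty\equiv0$, contradicting $\mu_\infty(\overline{Q_0})\geq c_0>0$.

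I expect the main obstacle to be the stopping-time/pigeonhole step in the second paragraph: the value of $p$ must be chosen large enough that divergence of $\int\mathbb{W}_p$ genuinely forces the existence of a single descendant of near-maximal density, rather than being spread as small contributions across many cubes of moderate density; this is where the dependence of $p$ on $d$, $s$, $\alpha$ enters. A secondary obstacle is the verification of the reflectionless identity in the limit, specifically controlling the near-diagonal contribution to the bilinear integral and confirming that $\mu_\infty$ inherits enough diffuseness and operator boundedness from the $\mu_k$ to make $\langle T(f\mu_\infty),1\rangle_{\mu_\infty}$ meaningful and the limiting procedure valid.
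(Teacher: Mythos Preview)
Your argument has a genuine gap at the most important step: you never explain why the weak limit $\mu_\infty$ should be \emph{reflectionless}. What you actually show is that $\langle T(f\mu_k),\varphi\rangle_{\mu_k}\to\langle T(f\mu_\infty),\varphi\rangle_{\mu_\infty}$ for nice $f,\varphi$, and hence that $T_{\mu_\infty}$ is bounded in $L^2(\mu_\infty)$. But boundedness of the CZO is \emph{not} reflectionlessness; for $\mu_\infty$ to be reflectionless you need $\langle T(f\mu_\infty),1\rangle_{\mu_\infty}=0$ for all mean-zero $f$, and nothing in your set-up forces the approximating quantities $\langle T(f\mu_k),1\rangle_{\mu_k}$ to be small. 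Indeed, if all $\mu_k$ happened to equal a fixed $\mu$ with bounded $T_\mu$ that is not reflectionless (and there is no shortage of such measures), the limit would be $\mu$ itself. The divergence of the Wolff integral tells you about many scales with non-negligible density; it says nothing about the size of $\langle T_{\mu_k}(1),f\rangle_{\mu_k}$.

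The paper's route is structurally different and supplies exactly the missing link. One introduces the \emph{Lipschitz oscillation coefficient} $\Theta_\mu^A(Q)=\sup_{\psi\in\Phi_A^\mu(Q)}|\langle T(\psi\mu),1\rangle_\mu|$ and proves two independent facts. First, since $\{\Phi_A^\mu(Q)\}_Q$ is a Riesz system, the $L^2$-boundedness of $T_\mu$ gives the \emph{a priori} inequality $\sum_Q \Theta_\mu^A(Q)^2/\mu(B(x_Q,3A\ell(Q)))\leq C\mu(\R^d)$. Second, a compactness argument (this is where the reflectionless limit appears) shows that under Wolff-rigidity there are $\eps,A,\Delta$ with $\Theta_\mu^A(Q)\geq\Delta D_\mu(3Q)\mu(3Q)$ for every $\eps$-\emph{regular} cube $Q$; here the blow-up is taken around cubes where $\Theta$ is \emph{small}, which is precisely what drives the limit to be reflectionless. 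Combining these gives $\sum_{Q\ \eps\text{-regular}} D_\mu(3Q)^2\mu(3Q)\leq C\mu(\R^d)$, and a separate graph-theoretic lemma (choosing $p$ large) shows that the full sum $\sum_Q D_\mu(3Q)^p\mu(3Q)$ is controlled by the sum over $\eps$-regular cubes. Your ``stopping-time/pigeonhole'' paragraph is reaching for something like this last lemma, but without the oscillation coefficient and the Riesz-system bound there is no mechanism connecting the Wolff blow-up to the vanishing of $\langle T(f\mu),1\rangle_\mu$.
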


As we discussed in the introduction, we are able to answer Question \ref{reflconj} affirmatively if $s\in (d-1,d)$.

\begin{prop}\label{onlytrivmeas}  If $s\in (d-1,d)$, then the only nice reflectionless measure for the $s$-dimensional Riesz transform is the zero measure.
%
\end{prop}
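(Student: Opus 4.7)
The plan is to exploit a feature special to the range $s\in(d-1,d)$: the $s$-Riesz kernel is a gradient, $K^s(x)=-\tfrac{1}{s-1}\nabla|x|^{-(s-1)}$, and the Riesz potential $U^\mu(x):=\int|x-y|^{-(s-1)}d\mu(y)$ satisfies the fractional Laplace equation $(-\Delta)^{\beta/2}U^\mu=c_0\mu$ distributionally, with $\beta:=d-s+1\in(1,2)$ and $c_0>0$. This is precisely the regime where $(-\Delta)^{\beta/2}$ admits a strong non-local maximum principle, which was the central tool in the qualitative statement of \cite{ENV} about the $\mathcal{H}^s$-measure of the support, and I would follow that philosophy here.

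The first step is to unpack the reflectionless condition. Testing $\langle T(f\mu),1\rangle_\mu=0$ against $f=\varphi-\bigl(\int\varphi\,d\mu/\int\psi\,d\mu\bigr)\psi$ for $\varphi\in\Lip_0(\R^d)$ and a fixed positive bump $\psi$, and using the antisymmetry of the kernel (as in the manipulation defining $T_\mu$ in Section~2.4), one extracts that $R^s\mu\equiv\vec{c}$ in the $L^2(\mu)$ sense, for some constant vector $\vec{c}\in\R^d$. Translating via $R^s\mu=-\tfrac{1}{s-1}\nabla U^\mu$, the gradient of $U^\mu$ is $\mu$-a.e.\ constant. Introducing the shifted potential $V(x):=U^\mu(x)+(s-1)\vec{c}\cdot x$, which still satisfies $(-\Delta)^{\beta/2}V=c_0\mu$ because the fractional Laplacian annihilates affine functions for $\beta\in(0,2)$ (by odd symmetry of its principal-value representation), I would aim to show that $V$ is constant $\mu$-a.e.\ on $\supp(\mu)$ and then invoke the non-local maximum principle to conclude that $V$ is globally constant, forcing $\mu=0$ via the PDE.

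The main obstacle I expect is the non-compactness of $\supp(\mu)$: the infinite-energy fact mentioned in the introduction tells us that a non-trivial reflectionless measure cannot be compactly supported, so neither $U^\mu$ nor $V$ need decay at infinity, and a naive application of the maximum principle fails. The technical heart of the proof should therefore be a localisation argument: truncate $\mu$ to $\chi_{B(0,R)}\mu$, quantify the defect introduced in the reflectionless identity and in the fractional PDE using the growth condition (\ref{growth}), apply the fractional maximum principle on bounded scales, and pass to the limit $R\to\infty$. A preliminary antisymmetry calculation, pairing $\iint K^s(x-y)d\mu(x)d\mu(y)=0$ (appropriately truncated) against the identity $\int R^s\mu\,d\mu=\vec{c}\mu(\R^d)$, should also identify $\vec{c}=\vec{0}$, thereby eliminating the otherwise problematic linear growth of the correction $(s-1)\vec{c}\cdot x$. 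In this way, the rigidity imposed by reflectionlessness plays the role that the finite-$\mathcal{H}^s$-measure hypothesis plays in \cite{ENV}, and the maximum-principle machinery then delivers $\mu\equiv 0$.
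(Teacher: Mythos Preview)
Your proposal identifies the right phenomenon --- the $s$-Riesz transform for $s\in(d-1,d)$ is tied to a fractional Laplacian of order $\beta\in(1,2)$, and a strong maximum principle is available --- but the execution has two genuine gaps, and the paper's actual argument fills them with an idea you do not have.

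First, the potential $U^\mu(x)=\int|x-y|^{-(s-1)}\,d\mu(y)$ is typically $+\infty$ everywhere: for a non-trivial nice measure the tail $\int_{|y|>R}|y|^{-(s-1)}\,d\mu(y)$ need not converge. So the object $V=U^\mu+(s-1)\vec c\cdot x$ is not defined, and neither is the distributional equation $(-\Delta)^{\beta/2}V=c_0\mu$ in any direct sense. This is fixable in principle by working only with differences (which is exactly what the paper's function $\overline R_\mu(1)$ encodes), but it undermines the plan as written.

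Second, and more seriously, the maximum-principle step lacks its key input. Even granting that the reflectionless condition yields $\overline R_\mu(1)=0$ $\mu$-a.e.\ on $\supp(\mu)$, this tells you nothing about where (or whether) $|\overline R_\mu(1)|$ attains its $L^\infty(m_d)$-supremum. The non-local maximum principle only bites once you know the supremum is attained at a point off the support; your truncation-and-limit sketch does not produce such a point, and the antisymmetry computation you propose to identify $\vec c=0$ involves a divergent double integral. The jump from ``constant on $\supp(\mu)$'' to ``globally constant'' is precisely where the argument stalls.

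The paper supplies the missing idea through an \emph{extremal problem over the space of reflectionless measures} (Proposition~\ref{genprop}). Using the Cotlar-type bound from Part~I, $\overline R_\mu(1)$ is uniformly bounded across all $\Lambda$-nice reflectionless measures; rescaling so that $\dist(0,\supp(\mu))=1$, one maximises $|\overline R_\mu(1)(0)|$ over this class and shows by compactness that the supremum is attained by some $\mu^\star$. A second variational step then shows $|\overline R_{\mu^\star}(1)(0)|=\|\overline R_{\mu^\star}(1)\|_{L^\infty(m_d)}$. Only now does the fractional-Laplacian representation (Lemma~\ref{sharmon}) apply: the principal-value identity at $0$ forces $\overline R_{\mu^\star}(1)$ to be globally constant, and the Wiener lemma then gives $\mu^\star=0$, a contradiction. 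The extremal construction is what converts ``$\overline R_\mu(1)$ vanishes on the support'' into ``the $L^\infty$-maximum is attained at a specific point'', and this is the step your proposal is missing.
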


\section{An overview of the proof of Proposition \ref{thm1}}\label{outline}

 We shall prove Proposition \ref{thm1} first.  The proof of Proposition \ref{intcase} will be significantly simpler.   It will be convenient to prove the analogue of Proposition \ref{thm1} with a dyadic Wolff potential.

\begin{prop}\label{dyprop}    Suppose that the only nice reflectionless measure for a CZO $T$ is the zero measure.   If $\mu$ is a finite nice measure for which the CZO $T$ is bounded in $L^2(\mu)$, then there exist $p\in (0,\infty)$ depending on $s,d,$ and $\alpha$, and a constant $C>0$, depending on $s$, $d$, $\alpha$ and $\|T\|_{\mu}$, such that
$$\sum_{Q\in \mathcal{D}}D_{\mu}(3Q)^p \mu(3Q)\leq C\mu(\mathbb{R}^d).
$$
\end{prop}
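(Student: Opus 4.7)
The plan is to argue by contradiction via a blow-up/compactness scheme at the ``worst'' dyadic scale, a strategy standard for rigidity results of this type. Suppose the conclusion fails. Then, by a diagonal extraction, there exist a sequence $p_k \downarrow 0$, a sequence $(\mu_k)$ of finite $\Lambda$-nice measures with $\|T\|_{\mu_k} \le M$, and dyadic lattices $\mathcal{D}_k$ such that
$$\sum_{Q \in \mathcal{D}_k} D_{\mu_k}(3Q)^{p_k}\,\mu_k(3Q) \ \ge\ k\,\mu_k(\mathbb{R}^d).$$
A greedy stopping-time selection inside $\mathcal{D}_k$ produces a dyadic cube $R_k$ that is \emph{maximal bad}: the Wolff sum inside $R_k$, normalized by $\mu_k(R_k)$, diverges as $k \to \infty$, while on every proper dyadic descendant of $R_k$ the same normalized quantity is bounded. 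Maximality also forces $D_{\mu_k}(3R_k)$ to be bounded below by a universal positive constant.

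Next I would translate and dilate so that $R_k$ becomes the unit cube $Q_0 = (0,1)^d$, and rescale $\mu_k$ to obtain $\Lambda$-nice measures $\tilde\mu_k$ with $D_{\tilde\mu_k}(R_k) \asymp 1$. Homogeneity of the kernel (condition (iv) of Section \ref{CZOSec}) ensures $\|T\|_{\tilde\mu_k} \le M$ is preserved. Passing to a subsequence using the dyadic stabilization lemma of Section \ref{stabdyadic} and the weak-$*$ compactness of $\Lambda$-nice measures on compact sets, obtain a stable limit lattice $\mathcal{D}'$ and $\tilde\mu_k \rightharpoonup \mu_\infty$. Niceness passes to the limit, and a lower-semicontinuity argument combined with the density lower bound gives $\mu_\infty(\overline{3Q_0}) \ge c > 0$, so $\mu_\infty$ is nonzero.

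The core step is to show that $\mu_\infty$ is reflectionless for $T$. Given $f \in \operatorname{Lip}_0(\mathbb{R}^d)$ with $\int f\, d\mu_\infty = 0$, I would construct approximants $f_k \in \operatorname{Lip}_0(\mathbb{R}^d)$ with $\int f_k\, d\tilde\mu_k = 0$ converging suitably to $f$. The uniform $L^2(\tilde\mu_k)$ boundedness of $T_{\tilde\mu_k}$ controls the pairing $\langle T_{\tilde\mu_k}(f_k), \psi \rangle_{\tilde\mu_k}$ for localizing $\psi \in \operatorname{Lip}_0$, while the Wolff-smallness on the proper descendants of $R_k$ (supplied by stopping-time maximality, after rescaling) allows one to pass through both the principal-value regularization $T_{\cdot, \delta}$ and the weak-$*$ limit without losing reflectionlessness; specifically, the commutator between $T_{\tilde\mu_k,\delta}$ and $T_{\mu_\infty,\delta}$ is absorbed into a vanishing Wolff remainder. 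The restricted growth at infinity needed to define $\langle T(f\mu_\infty), 1\rangle_{\mu_\infty}$ is handled by the tail decay of $T(f\mu_\infty)$ for mean-zero $f$, combined with niceness. The conclusion is that $\langle T(f\mu_\infty), 1\rangle_{\mu_\infty} = 0$, so $\mu_\infty$ is a nontrivial nice reflectionless measure for $T$, contradicting the hypothesis of the proposition.

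The main obstacle is precisely this reflectionlessness step: one must pass a genuinely singular integral pairing through weak convergence of the underlying measures while controlling both the local commutator errors and the long-range contributions. The exponent $p$ in the statement is pinned down by this analysis: it must be chosen small enough that the greedy selection is guaranteed to produce a density lower bound on $R_k$ that survives the rescaling, so that the extracted limit is simultaneously nontrivial and reflectionless. Once both properties are established, the assumed Wolff-rigidity of $T$ immediately yields the contradiction, proving the existence of the claimed constants $p$ and $C$.
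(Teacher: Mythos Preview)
Your argument has a genuine gap at the step you yourself flag as the ``main obstacle'': there is no mechanism in your outline that forces the weak limit $\mu_\infty$ to be reflectionless. The measures $\tilde\mu_k$ are merely $\Lambda$-nice with $\|T\|_{\tilde\mu_k}\le M$; neither property says anything about the smallness of the pairings $\langle T(f\tilde\mu_k),1\rangle_{\tilde\mu_k}$ for mean-zero $f$, and boundedness of $T_{\tilde\mu_k}$ does not upgrade to reflectionlessness under weak convergence. Your sentence about the ``commutator between $T_{\tilde\mu_k,\delta}$ and $T_{\mu_\infty,\delta}$ being absorbed into a vanishing Wolff remainder'' is not an argument: the stopping-time information (Wolff-smallness on descendants of $R_k$) controls densities, not oscillation of $T_{\tilde\mu_k}(1)$. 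Also, your choice $p_k\downarrow 0$ is in the wrong direction; the estimate becomes harder for smaller $p$, and the paper in fact proves it for some \emph{large} $p$.

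The paper's route supplies exactly the missing bridge. It introduces the Lipschitz oscillation coefficient $\Theta_\mu^A(Q)=\sup_{f\in\Phi_A^\mu(Q)}|\langle T(f\mu),1\rangle_\mu|$ and observes, via a Riesz-system inequality, that $\sum_Q \Theta_\mu^A(Q)^2/\mu(B(x_Q,3A\ell(Q)))\le C\|T_\mu\|^2\mu(\mathbb{R}^d)$. One then restricts attention to $\eps$-\emph{regular} cubes (those whose density varies slowly in the dyadic graph metric). A compactness alternative (Proposition~\ref{wavelet}) shows that either some nontrivial nice reflectionless measure exists, or there are $\eps,A,\Delta$ such that $\Theta_\mu^A(Q)\ge\Delta D_\mu(3Q)\mu(3Q)$ on every $\eps$-regular cube; it is \emph{this} smallness of $\Theta$ along a blow-up sequence---not mere boundedness of $T$---that produces a reflectionless limit via Lemma~\ref{smalldiff}. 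Under the hypothesis the second branch holds, and the Riesz inequality then controls $\sum_{\text{$\eps$-regular }Q} D_\mu(3Q)^2\mu(3Q)$. Finally an elementary graph-theory lemma (Lemma~\ref{pdomination}) shows that for $p$ large enough the full sum $\sum_Q D_\mu(3Q)^p\mu(3Q)$ is dominated by the sum over $\eps$-regular cubes, completing the proof.
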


To get Proposition \ref{thm1} from Proposition \ref{dyprop}, merely note that if $\mu$ is a measure with associated CZO transform $T$ bounded in $L^2(\mu)$, then for any cube $Q$, $\nu =\chi_{Q}\mu$ is a finite measure for which the associated CZO transform $T$ is bounded in $L^2(\nu)$.  Proposition \ref{dyprop} then yields that
$$\int_{\R^d}\mathbb{W}_p(\nu)d\nu\leq C\sum_{Q'\in \mathcal{D}}D_{\nu}(3Q')^p\nu(3Q')\leq C\nu(\R^d),
 $$
as required.

The proof of this proposition proceeds through studying the Lipschitz oscillation coefficient at a dyadic cube.  We first outline this scheme.

Let $\mu$ be a (non-negative) measure.  For $A>100\sqrt{d}$, and a cube $Q\in \mathcal{D}$, define the set of functions
$$\Phi_A^{\mu}(Q) = \Bigl\{\psi\in \Lip_0(B(x_Q, A\ell(Q))) : \|\psi\|_{\Lip}\leq\frac{1}{\ell(Q)}, \int_{\mathbb{R}^d}\psi d\mu=0\Bigl\}.$$
The system  $\Phi_A^{\mu}(Q)$ $(Q\in \mathcal{D})$ forms a Riesz system, that is, there exists a constant $C=C(A,s,d)>0$, such that for any $f\in L^2(\mu)$, and every choice of $\psi_Q\in \Psi_Q$ ($Q\in \mathcal{D}$),
$$\sum_{Q\in \mathcal{D}}\frac{|\langle f, \psi_Q\rangle_{\mu}|^2}{\mu(B(x_{Q}, 3A\ell(Q)))} \leq C\|f\|_{L^2(\mu)}^2.$$
See Appendix \ref{riesz} for a proof of this fact.  Consequently, if in addition $\mu$ is a finite $\Lambda$-nice measure for which $T$ is bounded in $L^2(\mu)$, then
\begin{equation}\begin{split}\label{rieszTbd}\sum_{Q\in \mathcal{D}}&\frac{|\langle T(\psi_Q\mu), 1 \rangle_{\mu}|^2}{\mu(B(x_{Q}, 3A\ell(Q)))} =  \sum_{Q\in \mathcal{D}}\frac{|\langle T_{\mu}(1), \psi_Q\rangle_{\mu}|^2}{\mu(B(x_{Q}, 3A\ell(Q)))} \\&\leq C(A)\|T_{\mu}(1)\|_{L^2(\mu)}^2\leq C(A)\|T_{\mu}\|^2_{L^2(\mu)\rightarrow L^2(\mu)}\mu(\mathbb{R}^d).
\end{split}\end{equation}

We now introduce the \emph{Lipschitz Oscillation Coefficient}.  Define
\begin{equation}\label{liposcdef}\Theta_{\mu}^{A}(Q) =\sup_{f\in \Phi_A^{\mu}(Q)}|\langle T(f\mu),1 \rangle_{\mu}|.
\end{equation}

From (\ref{rieszTbd}) we see that
\begin{equation}\label{rieszTbd2}
\sum_{Q\in \mathcal{D}}\frac{\Theta_{\mu}^A(Q)^2}{\mu(B(x_{Q}, 3A\ell(Q)))} \leq C(A)\|T_{\mu}\|^2_{L^2(\mu)\rightarrow L^2(\mu)}\mu(\mathbb{R}^d).
\end{equation}

\begin{lem}\label{coeflem}Suppose that $\mu$ is a finite $\Lambda$-nice measure for which the CZO $T$ is bounded in $L^2(\mu)$.  Suppose that $\mathcal{F}\subset \mathcal{D}$, $\Delta>0$ and $A>1$ are such that for every $Q\in \mathcal{F}$,
\begin{equation}\label{coeflowbd}\Theta_{\mu}^A(Q)\geq \Delta D_{\mu}(3Q)\mu(3Q).
\end{equation}
Then
$$\sum_{Q\in \mathcal{F}} D_{\mu}(3Q)^2 \frac{\mu(3Q)^2}{\mu(B(x_Q, 3A\ell(Q)))}\leq C(A)\Bigl(\frac{\|T_{\mu}\|_{L^2(\mu)\rightarrow L^2(\mu)}}{\Delta}\Bigl)^2\mu(\mathbb{R}^d).
$$
\end{lem}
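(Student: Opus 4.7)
The lemma is a direct consequence of the Riesz system estimate \eqref{rieszTbd2} combined with the hypothesis \eqref{coeflowbd}, so my plan is essentially a one-line calculation.

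First I would restrict the sum in \eqref{rieszTbd2} to the sub-family $\mathcal{F}\subset \mathcal{D}$. Since every summand on the left of \eqref{rieszTbd2} is non-negative, this yields
\begin{equation*}
\sum_{Q\in \mathcal{F}}\frac{\Theta_{\mu}^A(Q)^2}{\mu(B(x_{Q}, 3A\ell(Q)))} \leq C(A)\|T_{\mu}\|^2_{L^2(\mu)\rightarrow L^2(\mu)}\mu(\mathbb{R}^d).
\end{equation*}
Next, I would square the hypothesis \eqref{coeflowbd} to obtain
\begin{equation*}
\Theta_\mu^A(Q)^2 \geq \Delta^2\, D_\mu(3Q)^2\, \mu(3Q)^2 \quad \text{for every } Q \in \mathcal{F},
\end{equation*}
substitute this pointwise lower bound for $\Theta_\mu^A(Q)^2$ under the sum, and divide through by $\Delta^2$. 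The result is exactly the claimed Carleson-type estimate with constant $C(A)\bigl(\|T_\mu\|/\Delta\bigr)^2$.

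Since the deduction is immediate, there is no genuine obstacle internal to Lemma \ref{coeflem}. The real content sits on either side of it. Upstream, one needs the Riesz-system property of the family $\Phi_A^\mu(Q)$ (verified in Appendix \ref{riesz}), together with the antisymmetry identity $|\langle T(\psi_Q\mu),1\rangle_\mu| = |\langle T_\mu(1), \psi_Q\rangle_\mu|$ used to derive \eqref{rieszTbd2} by testing the Riesz inequality on $f = T_\mu(1) \in L^2(\mu)$. Downstream, and much more delicate, will be verifying the hypothesis \eqref{coeflowbd} on a suitable collection $\mathcal{F}$ of cubes of large density: that step is where the Wolff-rigidity of $T$ will enter, presumably through a blow-up/compactness argument extracting a non-zero nice reflectionless measure whenever $\Theta_\mu^A(Q)$ fails to be bounded below by a density multiple. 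Lemma \ref{coeflem} is the bookkeeping device that converts such a pointwise oscillation lower bound into the Carleson-type summability needed to reach the packing estimate of Proposition \ref{dyprop}.
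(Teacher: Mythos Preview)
Your argument is correct and matches the paper's own justification exactly: the lemma is stated there as an immediate consequence of \eqref{rieszTbd2}, obtained by restricting the sum to $\mathcal{F}$, inserting the squared lower bound from \eqref{coeflowbd}, and dividing by $\Delta^2$. Your surrounding remarks about the upstream Riesz-system input and the downstream blow-up argument also accurately reflect how the lemma fits into the proof of Proposition~\ref{dyprop}.
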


The lemma is an immediate consequence of the inequality (\ref{rieszTbd2}).  Consequently, \textit{if we were able to show that} there exist positive constants $\Delta$ and $A$, so that (\ref{coeflowbd}) holds \textit{for every cube} $Q\in \mathcal{D}$, then we would arrive at
$$\sum_{Q\in \mathcal{D}}D_{\mu}(3Q)^2 \frac{\mu(3Q)^2}{\mu(B(x_{Q}, 3A\ell(Q)))}\leq C\mu(\mathbb{R}^d),
$$
from which the solution of the Mateu-Prat-Verdera conjecture would follow for the CZO $T$.  However,\textit{ the inequality (\ref{coeflowbd}) can easily fail for many cubes}, but the main point behind the proof of Proposition \ref{dyprop}  is that under the hypothesis of the non-existence of non-trivial $\Lambda$-nice reflectionless measures, (\ref{coeflowbd}) holds for a class of cubes near which $\mu$ is sufficiently regular.

\begin{defn}Let $\mu$ be a locally finite measure, and let $\eps>0$.  A cube $Q\in \mathcal{D}$ is called $\eps$-regular for $\mu$ if
\begin{equation}\label{regcube}D_{\mu}(3Q') \leq 2^{\eps d(Q,Q')}D_{\mu}(3Q), \text{ for every }Q'\in \mathcal{D}.
\end{equation}
\end{defn}

In Section \ref{nonstandard} we shall show that if any locally finite measure $\mu$ has an $\eps$-regular cube $Q \in \mathcal{D}$, and $\eps$ is small enough (in terms of $d,s$ and $\alpha$), then $\mu$ is diffuse and so one can define the Lipschitz oscillation coefficient at $Q$.  We shall then prove the following alternative:


\begin{prop}\label{wavelet}  One of the following two statements holds:  Either

(i)  There exists a non-trivial nice reflectionless measure for the CZO $T$,

or

(ii) There exist $\eps>0$,  $A>0$, and $\Delta>0$, such that whenever $\mu$ is a locally finite measure and $Q\in \mathcal{D}$ is an $\eps$-regular cube, one has $$\Theta_{\mu}^{A}(Q)\geq \Delta D_{\mu}(3Q)\mu(3Q).$$
\end{prop}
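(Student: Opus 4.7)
The plan is to prove the alternative by contradiction and compactness: if case (ii) fails for every choice of parameters, I will manufacture a nontrivial reflectionless nice measure as a vague limit of suitably renormalized configurations, thereby forcing case (i).

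First I would extract a bad sequence. Negation of (ii) supplies sequences $\varepsilon_n\to 0$, $A_n\to\infty$, $\Delta_n\to 0$, locally finite measures $\mu_n$, and $\varepsilon_n$-regular dyadic cubes $Q_n\in\mathcal{D}_n$ with
$$\Theta_{\mu_n}^{A_n}(Q_n)<\Delta_n\, D_{\mu_n}(3Q_n)\,\mu_n(3Q_n).$$
Both sides of this inequality transform covariantly under the affine rescalings and $\mu$-dilations compatible with the homogeneity of $K$, so by translating and dilating I can take $Q_n=Q_0=(0,1)^d$ and, by rescaling $\mu_n$, arrange $D_{\mu_n}(3Q_0)=1$, i.e.\ $\mu_n(3Q_0)=3^s$. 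The $\varepsilon_n$-regularity at $Q_0$ then gives the uniform estimate $\mu_n(3Q')\le 2^{\varepsilon_n d(Q_0,Q')}\ell(Q')^s$ for every $Q'\in\mathcal{D}_n$. Using the stabilization lemma of \S\ref{stabdyadic} I pass to a subsequence along which $\mathcal{D}_n$ stabilizes to some $\mathcal{D}'$, and then to a further subsequence along which $\mu_n\to\mu$ vaguely.

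Next I would verify that $\mu$ is a legitimate candidate. Lower semicontinuity on the open cubes $3Q'$, combined with $\varepsilon_n d(Q_0,Q')\to 0$ at fixed $Q'\in\mathcal{D}'$, yields $\mu(3Q')\le\ell(Q')^s$, from which $\mu$ is nice; niceness automatically implies the restricted growth at infinity and the diffuseness needed for the reflectionless pairing of \S\ref{reflmeas}. Upper semicontinuity on $\overline{3Q_0}$ gives $\mu(\overline{3Q_0})\ge \limsup_n\mu_n(3Q_0)=3^s$, so $\mu\neq 0$. To check the reflectionless property, fix $f\in\Lip_0(\mathbb{R}^d)$ with $\int f\,d\mu=0$, pick an auxiliary $\chi\in\Lip_0$ concentrated near $Q_0$ with $\int\chi\,d\mu>0$, and set $f_n=f-c_n\chi$ with $c_n=\int f\,d\mu_n/\int\chi\,d\mu_n$, so that $c_n\to 0$ and $\int f_n\,d\mu_n=0$. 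For all large $n$ the function $f_n/\|f_n\|_{\Lip}$ belongs to $\Phi^{A_n}_{\mu_n}(Q_0)$, whence
$$|\langle T(f_n\mu_n),1\rangle_{\mu_n}|\le \|f_n\|_{\Lip}\,\Theta^{A_n}_{\mu_n}(Q_0)<\|f_n\|_{\Lip}\,3^s\Delta_n\longrightarrow 0.$$
Once continuity of the pairing along the vague limit is established, taking $n\to\infty$ gives $\langle T(f\mu),1\rangle_\mu=0$, which contradicts the supposed failure of (i).

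The main obstacle is precisely this last continuity statement: passing to the limit in $\langle T(f_n\mu_n),1\rangle_{\mu_n}$ when $\mu_n\to\mu$ vaguely and $f_n\to f$ in Lipschitz norm. Near the diagonal the integrand is dominated by $|x-y|^{1-s}$, integrable against $\mu_n\otimes\mu_n$ uniformly in $n$ by niceness, so excising a $\delta$-tube leaves a continuous compactly supported integrand for which vague convergence of $\mu_n\otimes\mu_n$ applies, after which $\delta\to 0$ is harmless. Far from the support of $f$ the pairing is interpreted as $\int T(f_n\mu_n)(x)[1-\psi(x)]\,d\mu_n(x)$ for a large $\Lip_0$-cutoff $\psi$; here the mean-zero condition $\int f_n\,d\mu_n=0$ yields the decay $|T(f_n\mu_n)(x)|\lesssim\|f_n\|_{\Lip}\,(1+|x|)^{-s-\alpha}$, integrable against the nice $\mu_n$ uniformly. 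The delicate point is to track $c_n\to 0$ carefully enough that this cancellation survives the limit, which is best handled by taking $n\to\infty$ first with $\psi$ fixed and only afterward sending $\psi\nearrow 1$, so that the tail estimate is exploited only for $\mu$ rather than for the $\mu_n$.
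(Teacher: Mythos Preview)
Your overall strategy matches the paper's: normalize, extract a weak limit, and show it is a nontrivial nice reflectionless measure. The paper actually splits the limit into two stages (Lemmas~\ref{epsregalt} and~\ref{epsnonexist}): first freeze $\varepsilon$ and send $A\to\infty$, $\Delta\to 0$ to obtain a reflectionless measure with the $\varepsilon$-growth bound $D_\mu(3Q)\le 2^{\varepsilon d(Q,Q_0)}$; then send $\varepsilon\to 0$ along these reflectionless measures to land on a nice one. Your single-pass limit is also viable, but one technical point in your write-up needs repair.

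The gap is the repeated claim that the renormalized measures $\mu_n$ are \emph{nice}. They are not. After normalizing so that $D_{\mu_n}(3Q_0)=1$, $\varepsilon_n$-regularity only gives $D_{\mu_n}(3Q')\le 2^{\varepsilon_n d(Q_0,Q')}$; for small cubes at large graph distance from $Q_0$ this bound blows up, so $\mu_n$ may well fail the growth condition $\mu_n(B(x,r))\le Cr^s$ at small scales. Your appeals to ``niceness'' for the uniform near-diagonal integrability and for the uniform tail integrability of $(1+|x|)^{-s-\alpha}$ therefore do not stand as written, and your attempted workaround at the end---deferring the tail estimate to the limit measure $\mu$---does not close the argument either: you know $\langle T(f_n\mu_n),1\rangle_{\mu_n}\to 0$, not $\langle T(f_n\mu_n),\psi\rangle_{\mu_n}\to 0$, so to pass from one to the other you still need uniform control of $\int_{|x|>R_\psi}(1+|x|)^{-s-\alpha}\,d\mu_n$.

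The fix is exactly what the paper does in Section~\ref{nonstandard}: introduce the class of $\Lambda$-\emph{reasonable} measures (allowing slightly super-$s$ growth governed by a parameter $\beta<\min(\alpha,s)$), verify via Lemma~\ref{regularreasonable} that your normalized $\mu_n$ are uniformly $\Lambda$-reasonable once $\varepsilon_n\le\varepsilon_0=\beta/2$, and then invoke Lemma~\ref{reasint} to obtain precisely the uniform diffuseness and uniform restricted-growth estimates you need. With that in place, Lemma~\ref{smalldiff} (Lemma~8.2 of Part~I) delivers the reflectionless limit directly and your argument goes through.
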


Under the hypothesis of Proposition \ref{dyprop}, we are forced into part (ii) of this alternative.  Let us now fix $\eps$ and $A$ as in the statement of part (ii).  Returning to the case of a finite nice measure $\mu$ for which $T$ is bounded in $L^2(\mu)$, Lemma \ref{coeflem} implies that
$$\sum_{Q\in \mathcal{D}: Q\text{ is }\eps\text{-regular}}D_{\mu}(3Q)^2 \frac{\mu(3Q)^2}{\mu(B(x_{Q}, 3A\ell(Q)))}\leq C\mu(\mathbb{R}^d),
$$
where the constant $C>0$ may depend on $\eps$, $A$, $\|T_{\mu}\|_{L^2(\mu)\rightarrow L^2(\mu)}$, and $\Delta$.  However, a regular cube is doubling by its defining property, and so
$$\sum_{Q\in \mathcal{D}: Q\text{ is }\eps\text{-regular}}D_{\mu}(3Q)^2 \mu(3Q)\leq C\mu(\mathbb{R}^d)
$$
as well.

We therefore arrive at the following question:   When does the sum over regular $\eps$-cubes bound the entire sum?  In Section \ref{graph}, we shall prove the following lemma.
\begin{lem}\label{pdomination} Suppose that $\mu$ is a finite nice measure.  For each $\eps>0$, there exists $p=p(\eps,d)\geq 2$ such that
$$\sum_{Q\in \mathcal{D}}D_{\mu}(3Q)^p\mu(3Q)\leq C\sum_{\substack{Q\in \mathcal{D}:\\ Q \text{ is }\eps-\text{regular}}}D_{\mu}(3Q)^p\mu(3Q),
$$
where $C=C(\eps,d, s)>0$.
\end{lem}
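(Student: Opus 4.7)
The plan is, for each $Q\in \mathcal{D}$, to select an $\eps$-regular ``shadow'' cube $Q^\star=Q^\star(Q)\in \mathcal{D}$ satisfying
$$D_\mu(3Q)\leq 2^{-\eps d(Q,Q^\star)}D_\mu(3Q^\star),$$
and then, after grouping every cube in $\mathcal{D}$ by its assignment, to dominate the full sum on the left by the restricted sum over $\eps$-regular cubes, provided $p$ is chosen sufficiently large in terms of $\eps$ and $d$.

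\medskip\noindent\emph{Construction and regularity of $Q^\star$.} Fix $Q\in\mathcal{D}$ and consider
$$\Phi_Q(Q'):=2^{-\eps d(Q,Q')}D_\mu(3Q'),\quad Q'\in\mathcal{D}.$$
Niceness of $\mu$ supplies a uniform bound on $D_\mu(3Q')$, while one step in $\Gamma(\mathcal{D})$ alters the sidelength by at most a factor of $2$, so only finitely many cubes lie within any fixed graph distance from $Q$. Hence $\Phi_Q$ tends to $0$ at infinity and attains its supremum at some $Q^\star$. I claim $Q^\star$ is $\eps$-regular: otherwise there would exist $Q''$ with $D_\mu(3Q'')>2^{\eps d(Q^\star,Q'')}D_\mu(3Q^\star)$, and the triangle inequality $d(Q,Q'')\leq d(Q,Q^\star)+d(Q^\star,Q'')$ would yield
$$\Phi_Q(Q'')>2^{\eps(d(Q^\star,Q'')-d(Q,Q''))}D_\mu(3Q^\star)\geq 2^{-\eps d(Q,Q^\star)}D_\mu(3Q^\star)=\Phi_Q(Q^\star),$$
contradicting maximality. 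Testing $\Phi_Q(Q)\leq\Phi_Q(Q^\star)$ then recovers the advertised inequality.

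\medskip\noindent\emph{Grouping and summation.} The vertex degree of $\Gamma(\mathcal{D})$ is at most $N_d:=2^d+2d+1$, so at most $N_d^k$ cubes lie at graph distance exactly $k$ from any fixed cube $R$, and each such cube satisfies $\ell(Q)\leq 2^k\ell(R)$. Combining these with the inequality $D_\mu(3Q)\leq 2^{-\eps k}D_\mu(3R)$ (valid whenever $Q^\star(Q)=R$ and $d(Q,R)=k$) and the identity $\mu(3Q)=3^s\ell(Q)^sD_\mu(3Q)$ gives, for any $\eps$-regular $R$,
$$\sum_{Q:\,Q^\star(Q)=R}D_\mu(3Q)^p\mu(3Q)\leq D_\mu(3R)^p\mu(3R)\sum_{k\geq 0}\bigl[N_d\cdot 2^{s-\eps(p+1)}\bigr]^k.$$
Choosing $p=p(\eps,d)\geq 2$ large enough that $\eps(p+1)>s+\log_2 N_d$ makes the geometric series converge to a constant $C=C(\eps,d,s)$. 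Summing the resulting estimate over all $\eps$-regular $R$, and recalling that each $Q\in\mathcal{D}$ is assigned to at least one such $R$, yields the lemma.

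\medskip\noindent\emph{Anticipated difficulty.} The only delicate point is arranging via the ``near-maximizer'' trick for the shadow $Q^\star$ to be $\eps$-regular; once that is in place, the remaining estimates reduce to a routine geometric-series computation using the bounded vertex degree of $\Gamma(\mathcal{D})$.
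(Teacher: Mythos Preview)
Your proof is correct and follows essentially the same ``senior vertex'' strategy as the paper: pick, for each $Q$, a maximizer of a suitably weighted quantity over $\Gamma(\mathcal{D})$, use the triangle inequality to show the maximizer is $\eps$-regular, and finish with a geometric series exploiting the bounded vertex degree. The only cosmetic difference is that the paper maximizes $2^{-Md(Q,Q')}D_\mu(3Q')^p\mu(3Q')$ for a fixed $M$ (and then checks that senior cubes are $\tfrac{M+s}{p+1}$-regular), whereas you maximize $2^{-\eps d(Q,Q')}D_\mu(3Q')$ directly, obtaining $\eps$-regularity of $Q^\star$ in one step; both routes impose the same kind of largeness condition on $p$.
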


From this lemma, and the inequality $D_{\mu}(3Q)^p\leq (C\Lambda)^{p-2}D_{\mu}(3Q)^2$, we have that
$$\sum_{Q\in \mathcal{D}}D_{\mu}(3Q)^p\mu(3Q)\leq C\sum_{\substack{Q\in \mathcal{D}:\\ Q \text{ is }\eps-\text{regular}}}D_{\mu}(3Q)^2\mu(3Q)\leq C\mu(\R^d),
$$
and Proposition \ref{dyprop} follows.



\section{Measures with slightly non-standard growth}\label{nonstandard}





Fix $\beta$ satisfying $\beta<\min(\alpha, s)$ (recall here that $\alpha\leq 1$). We say that a measure $\mu$ is $\Lambda$-\textit{reasonable} if
$$\mu(B(x,r)) \leq \Lambda r^s\bigl(R\max(1,\tfrac{1}{r})\bigl)^{\beta},
$$
whenever $B(x,r)\subset B(0,R)$, with $R>1$. \\

It is immediate from the definition that if $\mu_k$ is a sequence of $\Lambda$-reasonable measures, then there is a subsequence which converges weakly to a $\Lambda$-reasonable measure $\mu$.  (Of course, here we are using the weak compactness of the space of locally finite measures  over the (separable) space of compactly supported continuous functions.)

Throughout this section, all constants may depend on $\Lambda$ and $\beta$ without explicit mention.  The next lemma consists of two straightforward estimates using the definition of a reasonable measure.

\begin{lem}\label{reasint}  Let $R>1$, and suppose that $\mu$ is $\Lambda$-reasonable.  Then there is a constant $C>0$ such that if $r\in (0, 2R)$,
$$\int_{|x-y|<r}\frac{1}{|x-y|^{s-1}}d\mu(y) \leq CR^{\beta}\min(r^{1-\beta},r) \text{ for }x\in B(0,R),
$$
and,
$$\int_{\mathbb{R}^d\backslash B(0,R)}\frac{1}{|x|^{s+\alpha}}d\mu(x) \leq \frac{C}{R^{\alpha-\beta}}.
$$
\end{lem}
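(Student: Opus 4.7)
Both estimates are standard dyadic decomposition arguments; the only subtlety is that reasonability takes two different forms depending on whether the scale in question is above or below $1$, so there will be a split into a "large-scale" and "small-scale" regime for the first bound.

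The second estimate is the cleaner one, so I would do it first. My plan is to partition $\mathbb{R}^d \setminus B(0,R) = \bigcup_{k \geq 0} S_k$ into dyadic shells $S_k = B(0, 2^{k+1}R) \setminus B(0, 2^k R)$. On $S_k$ the integrand is at most $(2^k R)^{-(s+\alpha)}$, and since $S_k \subset B(0, 2^{k+1}R)$ with $2^{k+1}R > 1$, the definition of reasonability (applied with $R' = 2^{k+1}R$, so that the $\max$ collapses to $1$) yields $\mu(S_k) \leq \Lambda (2^{k+1}R)^{s+\beta}$. Multiplying and summing over $k \geq 0$ gives a geometric series of ratio $2^{\beta-\alpha}$, which converges since $\beta < \alpha$, to a constant multiple of $R^{\beta-\alpha}$.

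For the first estimate I would use a dual dyadic decomposition $B(x,r) \setminus \{x\} = \bigcup_{k \geq 0} A_k$ with $A_k = \{y : r 2^{-k-1} \leq |x-y| < r 2^{-k}\}$. In the case $s \leq 1$ the integrand is bounded on $B(x,r)$ by $r^{1-s}$ and a single application of reasonability (with $R' = 3R$, since $|x| < R$ and $r < 2R$) at scale $r$ finishes the job, so I would treat this case separately in one line. For $s > 1$, I would bound the integrand on $A_k$ by $(2^{k+1}/r)^{s-1}$ and apply reasonability to the containing ball $B(x, r 2^{-k}) \subset B(0, 3R)$. The resulting bound $\mu(A_k) \leq \Lambda (r 2^{-k})^s (3R \max(1, 2^k/r))^\beta$ splits along the dividing line $r 2^{-k} = 1$: in the range $r 2^{-k} \geq 1$ the contribution is $C R^\beta r \cdot 2^{-k}$, a geometric series summing to $O(R^\beta r)$, while in the range $r 2^{-k} < 1$ the contribution is $C R^\beta r^{1-\beta} \cdot 2^{k(\beta-1)}$, which is summable because $\beta < 1$ (guaranteed by $\beta < \alpha \leq 1$) and yields $O(R^\beta r^{1-\beta})$. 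For $r \leq 1$ only the second regime is present, and for $r \geq 1$ both are; combining the two regimes produces the claimed bound.

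I do not expect any genuine obstacle. The two points that require a little care are (i) checking that the summability exponents $\beta < 1$ and $\beta < \alpha$ are consumed in exactly the right place, and (ii) being careful at the "transition scale" $r 2^{-k} \sim 1$ where the reasonability bound switches forms — but since both forms agree up to constants at this transition, no delicate argument is needed, and a cruder split (e.g., separating the sum at the first $k$ with $r 2^{-k} < 1$) suffices.
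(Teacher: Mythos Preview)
Your proposal is correct and takes essentially the same approach as the paper: a layer-cake decomposition with the split at scale $1$ where the reasonability bound changes form, using $\beta<1$ for convergence on small scales and $\beta<\alpha$ for the tail. The paper uses the continuous distribution-function identity $\int_{|x-y|<r}|x-y|^{-(s-1)}\,d\mu(y)\leq C\int_0^{r}\mu(B(x,t))\,t^{-s}\,dt$ (and the analogous tail integral) in place of your discrete dyadic shells, and does not single out the case $s\le 1$, but otherwise the two arguments are parallel.
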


\begin{proof}  For the first estimate, note that
$$\int_{|x-y|<r}\frac{1}{|x-y|^{s-1}}d\mu(y)\leq C\int_0^{r}\frac{\mu(B(x,t))}{t^{s-1}}\frac{dt}{t}.
$$
Note that $B(x,t)\subset B(0,3R)$ for $x\in B(0,R)$ and $t\leq 2R$.  Thus, if $t\in (0,1]$, then $\mu(B(x,t))\leq 3^{\beta}R^{\beta}\Lambda t^{s-\beta}$.  Consequently, if $r\in (0,1]$
$\int_0^r\tfrac{\mu(B(x,t))}{t^{s-1}}\tfrac{dt}{t}\leq CR^{\beta}\int_0^rt^{1-\beta}\tfrac{dt}{t}\leq CR^{\beta}r^{1-\beta}$.  On the other hand, for $t\in (1,2R)$, $\mu(B(x,t))\leq 3^{\beta}R^{\beta}\Lambda t^{s}$, and so $\int_1^{r}\tfrac{\mu(B(x,t))}{t^{s-1}}\tfrac{dt}{t}\leq CR^{\beta}r$.  The desired bound follows.

The tail estimate is just as simple.  For $r>1$, we have $\mu(B(0,r)) \leq \Lambda r^{s+\beta}$. Substituting this inequality into the integral $\int_R^{\infty}\frac{\mu(B(0,r))}{r^{s+\alpha}}\frac{dr}{r}$ yields the required estimate.
\end{proof}

An immediate consequence of the first estimate in this lemma is that a $\Lambda$-reasonable measure $\mu$ is diffuse, that is, the function $(x,y)\rightarrow \frac{1}{|x-y|^{s-1}}$ is locally integrable with respect to $\mu\times\mu$.  Furthermore, the second estimate ensures that $\mu$ has restricted growth at infinity in the sense that $\int_{|x|\geq 1}\tfrac{1}{|x|^{s+\alpha}}d\mu(x)<\infty$.  Consequently,  the bilinear form $\langle T(f\mu), 1\rangle_{\mu}$ is well defined for any $f\in \Lip_0(\mathbb{R}^d)$ with $\int fd\mu=0$ (recall Section \ref{reflmeas}).\\


Now, recall from Section 8 of Part I that a sequence of measures $\mu_k$ is called \textit{uniformly diffuse} if, for each $R>0$ and $\eps>0$, there exists $r>0$ such that for all $k$,
$$\iint\limits_{\substack{B(0,R)\times B(0,R)\\|x-y|<r}}\frac{d\mu_k(x)d\mu_k(y)}{|x-y|^{s-1}}
\leq \eps,
$$
and $\mu_k$ is said to have \textit{uniformly restricted growth (at infinity)} if, for each $\eps>0$, there exists an $R\in(0,\infty)$ such that for all $k$,
$$\int_{\mathbb{R}^d\backslash \overline{B(0, R)}}\frac{1}{|x|^{s+\alpha}}d\mu_k(x)
\leq \eps.
$$

Notice that since the constant in Lemma \ref{reasint} depends only on $s$, $\Lambda$, and $\beta$, a sequence $\mu_k$ of $\Lambda$-reasonable measures is uniformly diffuse with uniformly restricted growth at infinity.  Consequently, Lemma 8.2 of Part I is applicable to such a sequence of measures.  Let us now state this convergence lemma for the special case under consideration.

Consider two sets of functions $$\Phi^{\mu}_R =\Bigl\{\psi\in \Lip_0(B(0, R)): \|\psi\|_{\Lip} < 1\;\text{ and } \int_{B(0,R)}\psi \,d\mu=0\Bigl\},$$
and $$\Phi^{\mu} =\Bigl\{\psi\in \Lip_0(\mathbb{R}^d): \|\psi\|_{\Lip} < 1\;\text{ and } \int_{\mathbb{R}^d}\psi\, d\mu=0\Bigl\} = \bigcup_{R>0}\Phi_R^{\mu}.$$

\begin{lem}\label{smalldiff}  Suppose that $\mu_k$ are $\Lambda $-reasonable measures that converge weakly to a measure $\mu$ (and so $\mu$ is $\Lambda$-reasonable as well).  Let $\gamma_k$ and $\widetilde{R}_k$ be sequences of non-negative numbers satisfying $\gamma_k\rightarrow 0 $, and $\widetilde{R}_k\rightarrow +\infty$, as $k\rightarrow\infty$.

If $|\langle T(\psi\mu_k),1\rangle_{\mu_k}|\leq \gamma_k$ for all $\psi\in \Phi_{\widetilde{R}_k}^{\mu_k}$, then $\langle T(\psi \mu),1 \rangle_{\mu} =0$ for all $\psi\in \Phi^{\mu}$.
\end{lem}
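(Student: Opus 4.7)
The plan is to establish the reflectionless identity for the limit measure $\mu$ by testing against an arbitrary $\psi \in \Phi^{\mu}$ and using the hypothesis on a corrected approximant $\psi_k \in \Phi^{\mu_k}_{\widetilde{R}_k}$.  The case $\mu \equiv 0$ is trivial (every pairing against $\mu$ vanishes), so I will assume $\mu$ is nontrivial and fix once and for all an auxiliary $\eta \in \Lip_0(\mathbb{R}^d)$ with $\int \eta \, d\mu \neq 0$; then $\int \eta \, d\mu_k \neq 0$ for all sufficiently large $k$ by weak convergence.

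\medskip

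Given $\psi \in \Phi^{\mu}$ with $\supp(\psi) \subset B(0,R)$ and $\|\psi\|_{\Lip}<1$, I would set
$$c_k = \frac{\int \psi \, d\mu_k}{\int \eta \, d\mu_k}, \qquad \psi_k = \psi - c_k \eta.$$
Since $\int \psi \, d\mu = 0$, weak convergence yields $c_k \to 0$.  Hence, for all large $k$, the function $\psi_k$ has mean zero against $\mu_k$, satisfies $\|\psi_k\|_{\Lip} \leq \|\psi\|_{\Lip} + |c_k|\|\eta\|_{\Lip} < 1$, and has compact support eventually contained in $B(0, \widetilde{R}_k)$ (since $\widetilde{R}_k \to \infty$).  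Thus $\psi_k \in \Phi^{\mu_k}_{\widetilde{R}_k}$, and the hypothesis forces $|\langle T(\psi_k\mu_k),1\rangle_{\mu_k}| \leq \gamma_k \to 0$.

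\medskip

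By bilinearity, $\langle T(\psi_k\mu_k),1\rangle_{\mu_k} = \langle T(\psi\mu_k),1\rangle_{\mu_k} - c_k\langle T(\eta\mu_k),1\rangle_{\mu_k}$.  I would show that $\langle T(\eta\mu_k),1\rangle_{\mu_k}$ is uniformly bounded in $k$ by unpacking the representation from Section \ref{reflmeas}: the near-diagonal bilinear integral is dominated via the first estimate of Lemma \ref{reasint} and the tail via the second, with constants depending only on $\Lambda$, $\beta$, $\|\eta\|_{\Lip}$, and $\diam(\supp \eta)$.  Consequently $c_k\langle T(\eta\mu_k),1\rangle_{\mu_k} \to 0$, and the task reduces to verifying $\langle T(\psi\mu_k),1\rangle_{\mu_k} \to \langle T(\psi\mu),1\rangle_\mu$.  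This is exactly the content of Lemma 8.2 of Part I applied to our sequence, whose hypotheses — uniform diffuseness and uniformly restricted growth at infinity — are provided by the $\Lambda$-reasonable condition together with Lemma \ref{reasint}, as the excerpt notes just before the statement.

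\medskip

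The main obstacle, which is sealed inside Lemma 8.2 of Part I, is passing a singular non-local operator through weak convergence.  The cure is to split the pairing into (i) a bilinear integral over $\{(x,y)\in B\times B:|x-y|\geq r\}$ where $K(x-y)H_{\psi,1}(x,y)$ is a bounded continuous test function for the product measures $\mu_k \times \mu_k \to \mu \times \mu$, plus a near-diagonal error of size $O(\sup_k \iint_{|x-y|<r,\,x,y\in B}|x-y|^{1-s}\, d\mu_k\, d\mu_k)$ which is uniformly small by uniform diffuseness, and (ii) a tail $\int T(\psi\mu_k)(x)[1-\chi](x)\,d\mu_k(x)$ whose pointwise bound $C_{\psi}(1+|x|)^{-s-\alpha}$ is dominated-convergence admissible thanks to the uniformly restricted growth at infinity.
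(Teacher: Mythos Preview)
Your overall strategy---correct $\psi$ to $\psi_k=\psi-c_k\eta$ so as to have $\mu_k$-mean zero, apply the hypothesis, and pass to the limit via uniform diffuseness and uniformly restricted growth---is sound, and indeed the paper does not prove this lemma at all: it simply presents it as the specialization of Lemma~8.2 of Part~I to $\Lambda$-reasonable measures.

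There is, however, a genuine gap in your bilinearity step.  You write
\[
\langle T(\psi_k\mu_k),1\rangle_{\mu_k}=\langle T(\psi\mu_k),1\rangle_{\mu_k}-c_k\langle T(\eta\mu_k),1\rangle_{\mu_k},
\]
but neither term on the right is defined: $\psi$ and $\eta$ do \emph{not} have $\mu_k$-mean zero, and by Section~\ref{reflmeas} the pairing $\langle T(f\mu),1\rangle_\mu$ is only given a meaning when $\int f\,d\mu=0$.  Without that cancellation one has merely $|T(\eta\mu_k)(x)|\lesssim |x|^{-s}$ at infinity, and for a $\Lambda$-reasonable measure $\int_{|x|>R}|x|^{-s}\,d\mu_k$ may diverge (since $\mu_k(B(0,r))$ is allowed to grow like $r^{s+\beta}$).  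Your appeal to the second estimate of Lemma~\ref{reasint} is therefore misplaced: that estimate concerns $|x|^{-s-\alpha}$, which is exactly the extra decay furnished by the mean-zero condition you lack.

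The repair is easy: drop the split and argue directly that $\langle T(\psi_k\mu_k),1\rangle_{\mu_k}\to\langle T(\psi\mu),1\rangle_\mu$.  Both pairings are now legitimately defined, and since $\psi_k\to\psi$ uniformly with $\|\psi_k\|_{\Lip}\to\|\psi\|_{\Lip}$, your final-paragraph sketch goes through verbatim with $\psi_k$ in place of $\psi$ on the approximating side: the near-diagonal and tail errors are controlled uniformly in $k$ by Lemma~\ref{reasint}, and on the truncated piece one combines weak convergence $\mu_k\times\mu_k\to\mu\times\mu$ with uniform convergence of $H_{\psi_k,\chi}\to H_{\psi,\chi}$.
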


Let us also record a useful corollary of this result.

\begin{cor} \label{reflweaklim}  The weak limit of a sequence of $\Lambda$-reasonable reflectionless measures is (provided it exists) a $\Lambda$-reasonable reflectionless measure.\end{cor}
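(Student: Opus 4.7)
The plan is to decompose the proof into two routine steps: first, that $\Lambda$-reasonableness passes to weak limits, and second, that reflectionlessness of $\mu$ then follows immediately from Lemma \ref{smalldiff} applied with $\gamma_k\equiv 0$.

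For the first step, I would fix a ball $B(x,r)\subset B(0,R)$ with $R>1$ and sandwich $\chi_{B(x,r)}$ between compactly supported continuous bumps $\varphi_\eta\in C_c(\mathbb{R}^d)$ satisfying $\chi_{B(x,r)}\leq \varphi_\eta\leq \chi_{B(x,r+\eta)}$. Since $B(x,r+\eta)\subset B(0,R+\eta)$, the weak convergence $\mu_k\to \mu$ gives
$$\mu(B(x,r))\leq \int \varphi_\eta\, d\mu=\lim_{k\to\infty}\int\varphi_\eta\, d\mu_k \leq \liminf_{k\to \infty}\mu_k(B(x,r+\eta))\leq \Lambda(r+\eta)^s\bigl((R+\eta)\max(1,\tfrac{1}{r+\eta})\bigr)^{\beta},$$
and letting $\eta\downarrow 0$ yields the required growth bound. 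Hence $\mu$ is $\Lambda$-reasonable, and Lemma \ref{reasint} then guarantees that $\mu$ is diffuse with restricted growth at infinity, so that the pairing $\langle T(f\mu),1\rangle_\mu$ is well defined for mean-zero $f\in \Lip_0(\mathbb{R}^d)$.

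For the second step, since each $\mu_k$ is reflectionless, every $\psi\in \Phi^{\mu_k}$ satisfies $\langle T(\psi\mu_k),1\rangle_{\mu_k}=0$. Choosing any sequence $\widetilde{R}_k\to\infty$, one has $\Phi_{\widetilde{R}_k}^{\mu_k}\subset \Phi^{\mu_k}$ (a function in $\Lip_0(B(0,\widetilde{R}_k))$ that has mean zero against $\mu_k$, when viewed as a globally defined $\Lip_0(\mathbb{R}^d)$ function, retains that mean-zero property), so the hypothesis of Lemma \ref{smalldiff} is satisfied with $\gamma_k\equiv 0$. That lemma then gives $\langle T(\psi\mu),1\rangle_\mu=0$ for every $\psi\in \Phi^\mu$; homogeneity in $\psi$ removes the Lipschitz-norm restriction, yielding the reflectionless relation for all mean-zero $f\in \Lip_0(\mathbb{R}^d)$.

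The corollary is really a clean packaging of Lemma \ref{smalldiff}: all the serious analytic work — controlling the passage of a singular bilinear form through a weak limit in the presence of both a local integrable singularity and slow decay at infinity — is already carried out there, and uniform $\Lambda$-reasonableness of the sequence $(\mu_k)$ provides exactly the uniform diffuseness and uniform restricted growth at infinity that Lemma \ref{smalldiff} requires. I therefore do not anticipate any genuine obstacle beyond the two straightforward verifications above.
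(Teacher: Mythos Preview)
Your proposal is correct and matches the paper's intended approach: the paper states this as an immediate corollary of Lemma \ref{smalldiff} without further proof, having already noted (just before Lemma \ref{reasint}) that $\Lambda$-reasonableness passes to weak limits. Your two-step verification—first the growth bound via a standard bump-function sandwich, then applying Lemma \ref{smalldiff} with $\gamma_k\equiv 0$ and any $\widetilde{R}_k\to\infty$—is exactly the argument the paper has in mind.
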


\subsection{Lipschitz oscillation coefficients and reflectionless measures}


In this section we prove Proposition \ref{wavelet}.  We shall assume that statement (i) of the proposition fails to hold, which is to say that the only nice reflectionless measure is the trivial measure.


The proof that statement (ii) holds will be obtained via a compactness argument.  First we fix $\eps$, and let $A$ tend to infinity and $\Delta$ tend to zero.  Then we let $\eps$ tend to zero.

\begin{lem}\label{regularreasonable}  There exists $\Lambda>0$ so that if $\eps$ is small enough (smaller than some $\eps_0$ depending on $\beta$), then any measure $\mu$ satisfying
$$D_{\mu}(3Q)\leq 2^{\eps d(Q,Q_0)} \text{ for every } Q\in \mathcal{D},
$$
for some dyadic lattice $\mathcal{D}$ containing $Q_0$, is $\Lambda$-reasonable.
\end{lem}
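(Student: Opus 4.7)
The plan is to cover any ball $B(x,r)\subset B(0,R)$ with $R>1$ by a single dilated dyadic cube and then read off the reasonable bound from the regularity hypothesis, after a careful estimate of the graph distance from that cube to $Q_0$.

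First, I pick $Q\in\mathcal{D}$ with $x\in Q$ and $\ell(Q)\in [r,2r)$; such a $Q$ exists since $\mathcal{D}$ contains cubes of every dyadic scale, and the observation $r\le R$ (which follows from $B(x,r)\subset B(0,R)$) guarantees this does not take us out of range. A straightforward $\ell^{\infty}$ check gives $B(x,r)\subset 3Q$, so the hypothesis yields
$$\mu(B(x,r))\le \mu(3Q)=\ell(3Q)^s D_{\mu}(3Q)\le (6r)^s\cdot 2^{\eps\, d(Q,Q_0)}.$$
Everything thus reduces to bounding the graph distance $d(Q,Q_0)$.

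Second, I would estimate $d(Q,Q_0)$ by routing through a large common ancestor. Writing $\ell(Q)=2^m$, send $Q$ up to scale $2^k$ via $k-m$ parent-edges, then cross horizontally to the ancestor of $Q_0$ at the same scale using at most $C(d)(R/2^k+1)$ face-neighbour hops (both ancestors lie within distance $O(R)$ of the origin since $|x|\le R$), and finally descend to $Q_0$ via $k$ child-edges. The total cost $2k-m+C(d)(R/2^k+1)$ is minimized by $k=\max(m,\lceil\log_2 R\rceil)$, yielding
$$d(Q,Q_0)\le 2\log_2 R+\log_2\max(1,1/r)+C(d).$$

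Plugging this into the earlier bound, and using $\max(1,1/r)\ge 1$ to write $R^{2\eps}\max(1,1/r)^{\eps}\le \bigl(R\max(1,1/r)\bigr)^{2\eps}$, gives $\mu(B(x,r))\le \Lambda r^s \bigl(R\max(1,1/r)\bigr)^{2\eps}$ with $\Lambda=6^s\cdot 2^{\eps C(d)}$. Setting $\eps_0:=\beta/2$ then produces the $\Lambda$-reasonable bound for every $\eps\le\eps_0$, with $\Lambda$ depending only on $d$, $s$, and $\beta$.

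The main obstacle is obtaining the graph-distance estimate. A naive horizontal traversal at scale $\ell(Q)\sim r$ would give $d(Q,Q_0)\lesssim R/r$, producing a hopeless $2^{\eps R/r}$ factor that is exponential in $R$. The crucial idea is to route through a high-scale ancestor, trading a linear-in-$R$ horizontal penalty for a merely logarithmic vertical one, so that the exponentiated distance becomes a power of $R\max(1,1/r)$ rather than an exponential in it. Once this estimate is in hand, the remainder is routine bookkeeping.
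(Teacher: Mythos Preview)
Your proof is correct and follows essentially the same route as the paper's: cover the ball by a dilated dyadic cube at scale $\sim r$, estimate the graph distance to $Q_0$ by ascending to scale $\sim R$, crossing horizontally in $O(1)$ steps, and descending, and then choose $\eps_0=\beta/2$. Your write-up is in fact somewhat more careful than the paper's (you handle the case $r>1$ explicitly via $\max(1,1/r)$, and you use a single cube containing $x$ rather than the paper's $3^d$ cubes), but the argument is the same.
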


\begin{proof}Fix a ball $B(x,r)\subset B(0,R)$ with $R>1$.  Then $B(x,r)$ is contained in the union of at most $3^d$ dyadic cubes of side length between $r$ and $2r$.  We shall estimate $d(Q,Q_0)$ for one of these dyadic cubes $Q$.  Note that $Q$ is contained in the ball $B(0,10\sqrt{d}R)$, and so has graph distance at most $\log_2(R/r)+ C$ from the dyadic ancestor of $Q_0$ of sidelength between $R$ and $2R$.  But then $d(Q,Q_0)\leq 2\log_2 R+\log_2(1/r)+C$.  It follows that
 $$\mu(B(x,r))\leq Cr^{s-\eps}R^{2\eps},
 $$so we only need to choose $\eps_0\leq \tfrac{\beta}{2}$.\end{proof}

\begin{lem}\label{epsregalt}  Let $\eps\in (0,\eps_0)$.  One of the following two statements holds:

(i)  There exist $A= A(\eps)$ and $\Delta=\Delta(\eps)>0$, such that every non-trivial locally finite measure $\mu$ with an $\eps$-regular cube $Q\in \mathcal{D}$ satisfies
  $$\Theta_{\mu}^{A}(Q)\geq \Delta D_{\mu}(3Q)\mu(3Q).$$

(ii)  There exists a reflectionless measure satisfying $\mu(\overline{3Q_0})\geq 1$ and
\begin{equation}\label{egrow}D_{\mu}(3Q)\leq 2^{\eps d(Q,Q_0)} \text{ for every }Q\in \mathcal{D'}\end{equation}
where $\mathcal{D}'$ is some dyadic lattice containing $Q_0$.

\end{lem}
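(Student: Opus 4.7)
My plan is a compactness-contradiction argument: assume (i) fails and produce a measure witnessing (ii). Negating (i), for sequences $A_n\to\infty$ and $\Delta_n\to 0^+$, I obtain for each $n$ a non-trivial locally finite measure $\mu_n$, a dyadic lattice $\mathcal{D}_n$, and an $\eps$-regular cube $Q_n\in\mathcal{D}_n$ with
$$\Theta_{\mu_n}^{A_n}(Q_n) < \Delta_n D_{\mu_n}(3Q_n)\mu_n(3Q_n).$$
Since the CZ kernel $K$ is homogeneous (condition (iv) in Section \ref{CZOSec}), the Lipschitz oscillation coefficient, $\eps$-regularity, and the $\Lambda$-reasonable condition all transform covariantly under translation/dilation paired with a positive renormalization of the mass. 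I may therefore precompose with the affine map sending $Q_n$ to $Q_0=(0,1)^d$ and rescale so that $D_{\mu_n}(3Q_0)=1$. The $\eps$-regularity of $Q_0$ then reads
$$D_{\mu_n}(3Q)\leq 2^{\eps d(Q,Q_0)}\text{ for every }Q\in\mathcal{D}_n,$$
so Lemma \ref{regularreasonable} (applicable since $\eps<\eps_0$) yields a uniform $\Lambda=\Lambda(\eps)$ with respect to which every $\mu_n$ is $\Lambda$-reasonable.

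Next I would pass to a subsequence along which $\mathcal{D}_n$ stabilizes to a lattice $\mathcal{D}'$ containing $Q_0$ (Section \ref{stabdyadic}), and along which $\mu_n$ converges weakly to some $\Lambda$-reasonable measure $\mu$, using the weak sequential compactness of $\Lambda$-reasonable measures recorded after their definition.

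Three properties must be verified for $\mu$. For any $Q\in\mathcal{D}'$, eventually $Q\in\mathcal{D}_n$ and the graph distance $d(Q,Q_0)$ agrees in $\mathcal{D}_n$ and $\mathcal{D}'$; lower semicontinuity of weak limits on the open set $3Q$ gives
$$\mu(3Q)\leq\liminf_n\mu_n(3Q)\leq 2^{\eps d(Q,Q_0)}\ell(Q)^s,$$
which is precisely (\ref{egrow}). Upper semicontinuity on the closed set $\overline{3Q_0}$ produces
$$\mu(\overline{3Q_0})\geq\limsup_n\mu_n(\overline{3Q_0})\geq\limsup_n\mu_n(3Q_0)=3^s\geq 1,$$
so $\mu$ is non-trivial. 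For reflectionlessness, set $\gamma_n:=3^s\Delta_n\to 0$ and $\widetilde R_n:=A_n\to\infty$. For any $\psi\in\Phi_{\widetilde R_n}^{\mu_n}$ (with $\|\psi\|_{\Lip}<1$), a harmless rescaling places $\psi$ inside the test class defining $\Theta_{\mu_n}^{A_n}(Q_0)$, so
$$|\langle T(\psi\mu_n),1\rangle_{\mu_n}|\leq\Theta_{\mu_n}^{A_n}(Q_0)<3^s\Delta_n=\gamma_n.$$
Lemma \ref{smalldiff} then delivers $\langle T(\psi\mu),1\rangle_\mu=0$ for every $\psi\in\Phi^\mu$, i.e.\ $\mu$ is reflectionless for $T$.

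The main obstacle I anticipate is bookkeeping rather than conceptual: I must verify cleanly that all relevant structures (the CZO $T$, the coefficient $\Theta_\mu^A$, the graph distance in $\mathcal{D}_n$, and the test-function class $\Phi_A^\mu(Q)$) transform correctly under the affine normalization that reduces to the case $Q_n=Q_0$. This reduction is what allows the universal constant $\Lambda$ from Lemma \ref{regularreasonable} to apply uniformly in $n$, and in turn to invoke the weak compactness and Lemma \ref{smalldiff} without loss. Once that is set up, the compactness chain (weak limit $+$ lattice stabilization $+$ upper/lower semicontinuity $+$ Lemma \ref{smalldiff}) runs essentially automatically.
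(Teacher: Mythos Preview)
Your proposal is correct and follows essentially the same compactness-contradiction scheme as the paper: negate (i), rescale each counterexample so that the $\eps$-regular cube becomes $Q_0$ with normalized density, invoke Lemma~\ref{regularreasonable} for uniform $\Lambda$-reasonableness, pass to a weak limit along a subsequence where the lattices stabilize, and use semicontinuity together with Lemma~\ref{smalldiff} to obtain the reflectionless measure in (ii). The paper's proof makes the rescaling explicit via the formula $\mu_k(\cdot)=\tfrac{1}{\widetilde{\mu}_k(3Q_k)}\widetilde{\mu}_k(x_{Q_k}+\ell(Q_k)\,\cdot\,)$ and takes $A_k=\Delta_k^{-1}=k$, but otherwise the argument is the same as yours, including the bookkeeping point about the test-function classes that you flag at the end.
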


\begin{proof}
Suppose that (i) fails to hold.  For each $k>100\sqrt{d}$, there is a non-trivial measure $\widetilde\mu_k$, and an $\eps$-regular cube $Q_k$, with $$|\langle T(\psi{\widetilde{\mu}_k}),1\rangle_{\widetilde{\mu}_k}|\leq \frac{1}{k}  D_{\widetilde{\mu}_k}(3Q_k)\widetilde{\mu}_k(3Q_k)\text{ for all }\psi\in \Phi_{k}^{\widetilde{\mu}_k}(Q_k).$$

Now define the measure $\mu_k$ by $$\mu_k(\,\cdot\,) = \displaystyle \tfrac{1}{\widetilde{\mu}_k(3Q_k)}\widetilde{\mu}_k(x_{Q_k}+\ell(Q_k)\, \cdot \, ).$$  Then $\mu_k(\overline{3Q_0})\geq 1$.  Furthermore, $\mu_k$ satisfies the inequality (\ref{egrow}) in the shifted lattice $\mathcal{D}_k = \tfrac{1}{\ell(Q_k)}[\mathcal{D}-x_{Q_k}]$, and
$$|\langle T(\psi\mu_k),1\rangle_{\mu_k}|\leq \frac{1}{k}  \text{ for all }\psi\in \Phi_{k}^{\mu_k}.
$$

By choosing a suitable subsequence, we may assume that $\mu_k$ converge weakly to a measure $\mu$ with $\mu(\overline{3Q_0})\geq 1$.  Passing to a further subsequence if necessary, we may assume that the lattices $\mathcal{D}_k$ stabilize to some lattice $\mathcal{D}'$, see Section \ref{stabdyadic}.   Since the dyadic cubes are open, the lower semicontinuity of the weak limit ensures that $\mu$ satisfies (\ref{egrow}) in the lattice $\mathcal{D}'$.  The measures $\mu_k$ are $\Lambda$-reasonable (Lemma \ref{regularreasonable}), and so applying Lemma \ref{smalldiff} with $\gamma_k = \tfrac{1}{k}$, and $\widetilde{R}_k =k$ yields the reflectionless measure promised in statement (ii).
\end{proof}



Our second lemma rules out the possibility that the second alternative of  Lemma \ref{epsregalt} holds for every $\eps>0$, and so proves Proposition \ref{wavelet}.

\begin{lem}\label{epsnonexist} Suppose that the only nice reflectionless measure is the zero measure.  Then there exists $\eps\in (0, \eps_0)$ such that there is no reflectionless measure $\mu$ satisfying $\mu(\overline{3Q_0})\geq 1$ and \begin{equation}\label{lambdagrow}D_{\mu}(3Q)\leq 2^{\eps d(Q, Q_0)}\end{equation} for every $Q\in \mathcal{D}$ where $\mathcal{D}$ is a dyadic lattice with $Q_0\in \mathcal{D}$.
\end{lem}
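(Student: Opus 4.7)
\noindent\textbf{Proof plan for Lemma \ref{epsnonexist}.}

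The plan is to argue by contradiction via a compactness argument that mirrors the proof of Lemma \ref{epsregalt}. Suppose the conclusion fails: then for every $n$ with $1/n < \eps_0$ there exists a reflectionless measure $\mu_n$, a dyadic lattice $\mathcal{D}_n \ni Q_0$, such that $\mu_n(\overline{3Q_0}) \geq 1$ and
\[
D_{\mu_n}(3Q) \leq 2^{(1/n) \, d_{\mathcal{D}_n}(Q, Q_0)} \quad \text{for every } Q \in \mathcal{D}_n.
\]
The strategy is to extract a weak subsequential limit $\mu$ and show that it is a non-trivial nice reflectionless measure, contradicting the hypothesis.

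First I would apply Lemma \ref{regularreasonable} to deduce that each $\mu_n$ is $\Lambda$-reasonable, with $\Lambda$ independent of $n$ (since $1/n < \eps_0$). The weak compactness of $\Lambda$-reasonable measures then yields a subsequence $\mu_n \rightharpoonup \mu$ with $\mu$ also $\Lambda$-reasonable. Passing to a further subsequence, I would invoke the stabilization lemma in Section \ref{stabdyadic} so that $\mathcal{D}_n$ stabilizes to some lattice $\mathcal{D}' \ni Q_0$. Next, Corollary \ref{reflweaklim} (a direct consequence of Lemma \ref{smalldiff} applied with $\gamma_k \equiv 0$) ensures that $\mu$ is reflectionless.

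The key remaining step is to upgrade $\mu$ from "reasonable" to "nice" using the vanishing of $\eps_n$. Fix any $Q \in \mathcal{D}'$; for all sufficiently large $n$, the cube $Q$ and a shortest $\mathcal{D}'$-path from $Q$ to $Q_0$ all lie in $\mathcal{D}_n$, so $d_{\mathcal{D}_n}(Q, Q_0) \leq d_{\mathcal{D}'}(Q, Q_0)$, and hence $2^{(1/n)\, d_{\mathcal{D}_n}(Q, Q_0)} \to 1$. Since $3Q$ is open, lower semicontinuity gives
\[
\mu(3Q) \leq \liminf_{n \to \infty} \mu_n(3Q) \leq \limsup_{n \to \infty} 2^{(1/n)\, d_{\mathcal{D}_n}(Q, Q_0)} \ell(Q)^s = \ell(Q)^s,
\]
i.e.\ $D_\mu(3Q) \leq 1$ for every $Q \in \mathcal{D}'$. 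Because every ball $B(x,r)$ can be covered by a bounded (dimensional) number of dyadic cubes from $\mathcal{D}'$ of sidelength comparable to $r$, this implies $\mu(B(x,r)) \leq C r^s$ uniformly; thus $\mu$ is nice. Non-triviality is immediate from the upper-semicontinuity for closed sets: $\mu(\overline{3Q_0}) \geq \limsup_n \mu_n(\overline{3Q_0}) \geq 1$. This contradicts the hypothesis that the only nice reflectionless measure for $T$ is the zero measure.

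The main obstacle I anticipate is the bookkeeping associated with the lattices: one must ensure both that $\mu$ inherits the reflectionless property (handled cleanly by Corollary \ref{reflweaklim}, since reflectionlessness of each $\mu_n$ is an unconditional hypothesis and the $\mu_n$ are uniformly reasonable) and that the vanishing of $\eps_n$ combined with the fact that graph distances in $\mathcal{D}_n$ are dominated by those in $\mathcal{D}'$ for $n$ large gives the uniform growth bound on $\mu$ \emph{after} the limit. Once these two technicalities are in place, the contradiction is immediate.
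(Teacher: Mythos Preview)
Your proposal is correct and follows essentially the same compactness argument as the paper: assume failure for each $\eps$, pass to a weak limit along a subsequence (using that the $\mu_n$ are uniformly $\Lambda$-reasonable and the lattices stabilize), invoke Corollary~\ref{reflweaklim} to get reflectionlessness, and use lower semicontinuity on the open cubes $3Q$ together with $\eps_n\to 0$ to conclude the limit is nice but non-trivial. If anything, you supply more detail than the paper (the bookkeeping on graph distances and the Portmanteau-type step for $\overline{3Q_0}$), but the route is the same.
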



\begin{proof} Suppose that for each $\eps\in (0,\eps_0)$, there exists a reflectionless measure $\mu_{\eps}$ with $\mu_{\eps}(\overline{3Q_0})\geq 1$ satisfying (\ref{lambdagrow}) for every cube $Q$ in some dyadic lattice $\mathcal{D}_{\eps}$ containing $Q_0$.  We may assume that $\mu_{\eps}$ converge weakly to a measure $\mu$ as $\eps$ tends to zero along a suitably chosen sequence, and also that the lattices $\mathcal{D}_{\eps}$ stabilize to some lattice $\mathcal{D}'$.  Since the measures $\mu_{\eps}$ are $\Lambda$-reasonable, an application of Corollary \ref{reflweaklim} ensures that the limit measure $\mu$ is reflectionless.  However, $\mu(3Q)\leq \ell(Q)^s$ for any cube $Q\in \mathcal{D}'$.  Thus $\mu$ is a nice reflectionless measure.  But $\mu(\overline{3Q_0})\geq 1$.   This contradiction proves the lemma.\end{proof}

\section{Senior vertices on a graph}\label{graph}

To complete the proof of Proposition \ref{dyprop} it remains to provide a proof of Lemma \ref{pdomination}, which is a very elementary piece of graph theory.\smallskip

Suppose that $\Gamma$ is a graph with vertex degree bounded by $D$.  Suppose that $\nu$ is a bounded non-negative function on $\Gamma$.

Let $M>0$.  We call a vertex $x\in \Gamma$ \textit subordinate to $y\in \Gamma$ if $\nu(x)< 2^{-Md(x,y)}\nu(y)$.  Here $d(x,y)$ denotes the graph distance (i.e., the length of a shortest path from $x$ to $y$ in $\Gamma$).  A vertex $x\in \Gamma$ is \textit{senior} if it is not subordinate to any vertex in the graph.

For each $x\in \Gamma$, consider $\max\{\nu(y)2^{-Md(x,y)} : y\in \Gamma\}$.  That the maximum is attained is an immediate consequence of the boundedness of $\nu$ and the vertex degree.  Suppose that the maximum is attained at $x^*$. (We shall view $x^{*}$ as a vertex determined by the vertex $x$.)  Then we claim that $x^*$ is senior.  Otherwise, there exists some $z\in \Gamma$ such that $\nu(x^*)< 2^{-Md(x^*,z)}\nu(z)$.  But then by the triangle inequality $\nu(x^*)2^{-Md(x,x^*)}<\nu(z)2^{-Md(x,z)}$, which is a contradiction.  Clearly $\nu(x)\leq 2^{-Md(x^*,x)}\nu(x^*)$.

If $2^{-M}D<1$, then for any fixed senior vertex $z\in \Gamma$,
$$\sum_{x\in \Gamma : x^{*}=z}\nu(x) \leq \sum_{x\in \Gamma} 2^{-Md(x,z)}\nu(z) \leq \nu(z) \sum_{k\geq 0}2^{-Mk}\#\{x\in \Gamma: d(x,z)=k\},$$
which is at most $\nu(z)\sum_{k\geq 0} 2^{-Mk}D^k\leq C\nu(z)$.  Combining these observations, we arrive at

$$\sum_{x\in \Gamma}\nu(x) \leq C \sum_{x \text{ is senior}}\nu(x),
$$


\subsection{The proof of Lemma \ref{pdomination}}

Fix $\eps>0$ and a finite nice measure $\mu$.

First choose $M$ with $(2^d+2d+1)2^{-M}<1$.  (The number $2^d+2d+1$ is an upper bound for the vertex degree of the graph $\Gamma(\mathcal{D})$.)  For $p\geq 2$, set $$\nu(Q) = D_{\mu}(3Q)^p\mu(3Q).$$  Notice that $\nu$ is a bounded function, since $\mu$ is a finite nice measure.  A vertex $Q\in \mathcal{D}$ is senior if $$D_{\mu}(3Q')^p\mu(3Q')\leq 2^{Md(Q',Q)}D_{\mu}(3Q)\mu(3Q)\text{ for every }Q'\in \mathcal{D}.$$

The general considerations of Section \ref{graph} guarantee that
\begin{equation}\label{uniform}\sum_{Q\in \mathcal{D}}\nu(Q) \leq C\sum_{Q\in \mathcal{D}: \,Q\text{ is senior}}\nu(Q).
\end{equation}

On the other hand, for a senior cube $Q\in \mathcal{D}$,
$$D_{\mu}(3Q')^{p+1} \leq 2^{Md(Q', Q)}D_{\mu}(3Q)^{p+1}\frac{\ell(Q)^s}{\ell(Q')^s}\leq 2^{(M+s)d(Q,Q')}D_{\mu}(3Q)^{p+1},
$$
for any $Q'\in \mathcal{D}$.  Thus, a senior cube $Q$ is an $\tfrac{M+s}{p+1}$-regular cube for $\mu$.

But now, provided that $\tfrac{M+s}{p+1}<\eps$, we have that
$$\sum_{Q\in \mathcal{D}}\nu(Q) \leq C\sum_{Q\in \mathcal{D}: \,Q\text{ is }\eps-regular}\nu(Q),
$$
and so Lemma \ref{pdomination} is proved.

\section{The proof of Proposition \ref{intcase}}

The proof of Proposition \ref{intcase} is quite similar to that of Proposition \ref{thm1} except that the proof is significantly more qualitative, and the measures under consideration will have more regularity.

Fix $s\in \mathbb{Z}$, $s\in (0,d)$.  Let us suppose that $T$ is a CZO such that the only $s$-AD regular reflectionless measures associated to $T$ 
are of the form $c\mathcal{H}^s|L$  for a constant $c>0$ and an $s$-plane $L$.  Recall the definition of the Lipschitz oscillation coefficient $\Theta_{\mu}^A(Q)$ from Section \ref{outline}.  We shall prove the following lemma.

\begin{lem}\label{LCVdelta}  For each $\delta>0$, there exists $\Delta>0$ and $A>0$, such that if $\mu$ is an $\Lambda$-AD regular measure, and $Q\in \mathcal{D}$ is $\delta$-non LCV for $\mu$,  then
$$\Theta_{\mu}^A(Q) \geq \Delta\ell(Q)^s.
$$
\end{lem}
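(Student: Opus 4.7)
The plan is to argue by contradiction via a compactness/blow-up argument, in the spirit of the proof of Proposition \ref{wavelet}. Suppose the conclusion fails for some $\delta>0$: then for each integer $k\geq 100\sqrt{d}$ there is a $\Lambda$-AD regular measure $\mu_k$ and a $\delta$-non LCV cube $Q_k\in\mathcal{D}$ with
$$\Theta_{\mu_k}^k(Q_k)<\tfrac{1}{k}\ell(Q_k)^s.$$
Rescaling by $\tilde{\mu}_k(\,\cdot\,)=\tfrac{1}{\ell(Q_k)^s}\mu_k(x_{Q_k}+\ell(Q_k)\,\cdot\,)$ and passing to the shifted dyadic lattice $\mathcal{D}_k=\tfrac{1}{\ell(Q_k)}[\mathcal{D}-x_{Q_k}]$, the cube $Q_0=(0,1)^d$ now plays the role of $Q_k$: the rescaled measure $\tilde{\mu}_k$ is still $\Lambda$-AD regular, $Q_0$ is still $\delta$-non LCV for $\tilde{\mu}_k$, and $|\langle T(\psi\tilde{\mu}_k),1\rangle_{\tilde{\mu}_k}|\leq\tfrac{1}{k}$ for every $\psi\in\Phi_k^{\tilde{\mu}_k}(Q_0)$.

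Next, I would extract a subsequence along which $\tilde{\mu}_k$ converges vaguely to a locally finite measure $\mu$, and the lattices $\mathcal{D}_k$ stabilize to some lattice $\mathcal{D}'$ containing $Q_0$. Since $\Lambda$-AD regular measures are $\Lambda$-nice and hence $\Lambda$-reasonable, Lemma \ref{smalldiff} applies with $\gamma_k=\tfrac{1}{k}$ and $\widetilde R_k=k$ and yields that $\mu$ is reflectionless for $T$. A standard argument then transfers the $\Lambda$-AD regularity itself to $\mu$: the upper bound descends from $\Lambda$-niceness of the $\tilde{\mu}_k$, while for the lower bound any $x\in\supp(\mu)$ is a limit of points $x_k\in\supp(\tilde{\mu}_k)$, so the inclusion $B(x_k,r)\subset B(x,\rho)$ for $\rho>r$ together with the closed-set inequality $\mu(\overline{B(x,\rho)})\geq\limsup_k\tilde{\mu}_k(\overline{B(x,\rho)})$ gives $\mu(\overline{B(x,\rho)})\geq\tfrac{1}{\Lambda}r^s$, hence the desired AD lower bound at $x$.

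By the AD-rigidity hypothesis on $T$, the reflectionless AD-regular measure $\mu$ is either the zero measure or of the form $c\mathcal{H}^s|L$ for some $s$-plane $L$ and some $c>0$. The zero case is ruled out because the $\delta$-non LCV property exhibits points in $\overline{3Q_0}\cap\supp(\tilde{\mu}_k)$, and AD regularity then forces each $\tilde{\mu}_k$ to carry at least mass $1/\Lambda$ in a fixed bounded neighbourhood of $\overline{3Q_0}$, a bound that survives in $\mu$. It remains to contradict the $\delta$-non LCV property in the limit. Choose witnesses $x_k,y_k\in\overline{3Q_0}\cap\supp(\tilde{\mu}_k)$ with $B(\tfrac{x_k+y_k}{2},\delta)\cap\supp(\tilde{\mu}_k)=\emptyset$, and pass to a further subsequence so that $x_k\to x$ and $y_k\to y$ in $\overline{3Q_0}$. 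The preceding step places $x,y\in\supp(\mu)=L$, so the midpoint $m=\tfrac{x+y}{2}$ lies on $L$ and hence $\mu(B(m,\delta/2))=c\mathcal{H}^s(L\cap B(m,\delta/2))>0$. On the other hand, once $k$ is large enough that $|m-\tfrac{x_k+y_k}{2}|<\delta/2$, the open ball $B(m,\delta/2)$ sits inside $B(\tfrac{x_k+y_k}{2},\delta)$ and so has zero $\tilde{\mu}_k$-measure; lower semicontinuity of the vague limit on open sets then forces $\mu(B(m,\delta/2))=0$, which is the desired contradiction.

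The main obstacle will be the careful preservation of AD regularity (especially the lower bound on $\supp(\mu)$) under vague limits, so that the limit measure really falls within the class covered by the AD-rigidity hypothesis; this is not hard but requires some care with the Portmanteau-type inequalities. Once this is arranged, the remainder of the proof is a direct adaptation of the compactness scheme developed in Section \ref{nonstandard}, with the AD-rigidity of $T$ playing the role of the non-existence of non-trivial nice reflectionless measures that was used in the proof of Proposition \ref{wavelet}.
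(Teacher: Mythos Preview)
Your argument is correct and follows essentially the same compactness scheme as the paper: rescale to the unit cube, pass to a weakly convergent subsequence, use Lemma \ref{smalldiff} to obtain a reflectionless $\Lambda$-AD regular limit, invoke the rigidity hypothesis, and then derive a contradiction from the persistence of the $\delta$-non LCV condition. The paper packages the three auxiliary facts (compactness of $\Lambda$-AD regular measures, preservation of support points, and preservation of the $\delta$-non LCV condition) as separate bullet points before the main argument, whereas you reprove them inline; your treatment of the empty-hole part via the fixed ball $B(m,\delta/2)$ is a minor variant of the paper's increasing-Lipschitz-function argument, and the lattice stabilization you invoke is harmless but unnecessary here since only $Q_0$ matters.
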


Taking this lemma for granted for the time being, we shall conclude the proof of Proposition \ref{intcase}.  Let us recall that we want to show that if $\mu$  is a $\Lambda$-AD regular measure with $T_{\mu}$ bounded in $L^2(\mu)$, then  for each $\delta>0$, there exists $C_{\delta}>0$ such that for every $P\in \mathcal{D}$
\begin{equation}\label{deltcarl}\sum_{\substack{Q\in \mathcal{D}: Q\subset P,\\Q \text{ is }\delta-\text{non-LCV (for }\mu)}}\ell(Q)^s\leq C_{\delta}\ell(P)^s.
\end{equation}
To see this, fix some $\Lambda$-AD regular measure with $T$ bounded in $L^2(\mu)$.  Also fix $\delta>0$, and a dyadic cube $P\in \mathcal{D}$.  Consider a cube $Q\in \mathcal{D}$, $Q\subset P$ that is $\delta$-non LCV.  By Lemma \ref{LCVdelta}, there exists $\psi_Q\in \Phi^{\mu}_A(Q)$ with
$$|\langle T(\psi_Q\mu),1\rangle_{\mu}|\geq \frac{\Delta}{2}\ell(Q)^s.
$$
Now, set $A'>100\sqrt{d}A$.   Choose $\varphi_{A'}\in \Lip_0(B(x_P, 2A'\ell(P)))$ satisfying $0\leq\varphi_{A'}\leq 1$ on $\mathbb{R}^d$ and $\varphi\equiv 1$ on $B(x_{P}, A'\ell(P))$.  Then, since $\psi_Q$ has $\mu$-mean zero, $|\langle T(\psi_Q\mu),1-\varphi_{A'}\rangle_{\mu}|$ is dominated by
$$\int_{\mathbb{R}^d\backslash B(x_P, A'\ell(P))}\int_{B(x_Q, A\ell(Q))}|K(y-x)-K(-x)||\psi_Q(y)|d\mu(y)d\mu(x).
$$
But, with $x\not\in  B(x_P, A'\ell(P))$, and $y\in B(x_Q, A\ell(Q))$, $|K(y-x)-K(-x)|\leq\tfrac{CA^{\alpha}\ell(Q)^{\alpha}}{|x|^{s+\alpha}}$.  Also, $\|\psi_Q\|_{\infty}\leq 2A$.  Thus,
$$|\langle T(\psi_Q\mu),1-\varphi_{A'}\rangle_{\mu}|\leq CA\int_{\mathbb{R}^d\backslash B(x_P, A'\ell(P))}\frac{A^{\alpha+s}\ell(Q)^{s+\alpha}}{|x|^{s+\alpha}}d\mu(x),$$
but this is dominated by $\tfrac{CA^{1+s+\alpha}\ell(Q)^s}{A'^{\alpha}}$ (as $\ell(P)\geq \ell(Q)$).  Fix $A'$ (chosen in terms of $\delta$, $s$, $d$, and $\Lambda$), so that $$\frac{CA^{1+s+\alpha}}{A'^{\alpha}}\leq\frac{\Delta}{4}.$$
Our conclusion is that, for each $\delta$-non LCV cube $Q\subset P$, there exists $\psi_Q\in \Phi^{\mu}_A(Q)$ such that
$$|\langle T(\psi_Q\mu),\varphi_{A'}\rangle_{\mu}|\geq \frac{\Delta}{4}\ell(Q)^s.
$$
Now, recall that the system  $\Phi_A^{\mu}(Q)$ $(Q\in \mathcal{D})$ forms a Riesz system, so there exists a constant $C=C(A,s,d)>0$, such that for every choice of $\psi_Q\in \Phi_A^{\mu}(Q)$ ($Q\in \mathcal{D}$)
$$\sum_{Q\in \mathcal{D}: \, Q\subset P}\frac{|T(\varphi_{A'}\mu), \psi_{Q}\rangle_{\mu}|^2}{\mu(B(x_{Q}, 3A\ell(Q)))} \leq C\|T_{\mu}(\varphi_{A'})\|_{L^2(\mu)}^2.
$$
Restricting the sum to those $\delta$-non LCV cubes $Q$ contained in $P$, we deduce that
\begin{equation}\begin{split}\nonumber \sum_{\substack{Q\in \mathcal{D}: Q\subset P,\\Q \text{ is }\delta-\text{non-LCV}}}\frac{\ell(Q)^{2s}}{{\mu(B(x_{Q}, 3A\ell(Q)))} } \leq C(\delta)\|T_{\mu}(\varphi_{A'})\|_{L^2(\mu)}^2\\
\leq C(\delta)\|T_{\mu}\|^2_{L^2(\mu)\rightarrow L^2(\mu)}\|\varphi_{A'}\|_{L^2(\mu)}^2\leq C(\delta)\|T_{\mu}\|^2_{L^2(\mu)\rightarrow L^2(\mu)}\ell(P)^s.
\end{split}\end{equation}
However, since $\mu(B(x_{Q}, 3A\ell(Q)))\leq C(A)\ell(Q)^s$, we derive the required inequality (\ref{deltcarl}).\medskip

We now return proving Lemma \ref{LCVdelta}.  Let's begin with a few simple facts about the weak convergence of AD-regular measures.
\begin{itemize}
\item  Suppose that $\mu_k$ is a sequence of $\Lambda$-AD regular measures.  Then there is a subsequence of the measures $\mu_k$ that converges weakly to a $\Lambda$-AD regular measure $\mu$.
\item Fix $\mu_k$ a sequence of $\Lambda$-AD regular measures that converges weakly to a measure $\mu$ (and so $\mu$ is $\Lambda$-AD regular).  Suppose that $x_k\in \supp(\mu_k)$ and $x_k$ converges to some $x\in \mathbb{R}^d$.  Then $x\in \supp(\mu)$.
\item Fix a dyadic cube $Q\in \mathcal{D}$.  Let $\mu_k$ be a sequence of $\Lambda$-AD regular measures that converges weakly to a measure $\mu$ (and so $\mu$ is $\Lambda$-AD regular).  If $Q$ is $\delta$-non LCV for each $\mu_k$, then $Q$ is $\delta$-non LCV for $\mu$.
\end{itemize}

The first two facts are essentially immediate and very well known.  We shall prove the third item.  By definition, there are points $x_k$ and $y_k$ in $\overline{3Q}\cap \supp(\mu_k)$ such that $z_k = \tfrac{x_k+y_k}{2}$ satisfies $B(z_k, \delta\ell(Q))\cap\supp(\mu_k)=\varnothing$.   By passing to a subsequence, we may assume that $x_k$ converge to some $x\in \overline{3Q}\cap \supp(\mu)$, and $y_k$ converge to some $y\in \overline{3Q}\cap \supp(\mu)$.  But then $z_k$ converges to $z=\tfrac{x+y}{2}$.  Now, choose an increasing sequence $f_{\ell}\in \Lip_0(B(z,\delta\ell(Q)))$ that converges pointwise to $\chi_{B(z,\delta\ell(Q))}$.  For each $\ell$, $\supp(f_{\ell})\subset B(z_k, \delta\ell(Q))$ for all sufficiently large $k$, and so $\int_{\mathbb{R}^d}f_{\ell}\,d\mu = \lim_{k\rightarrow\infty}\int_{\R^d}f_{\ell}\,d\mu_k=0$.  But then the monotone convergence theorem ensures that $\mu(B(z,\delta))=0$.

We now suppose that the statement of the lemma is false.  Then for some $\delta>0$, and every $k\in \mathbb{N}$, there exists a $\Lambda$-AD regular measure $\widetilde{\mu}_k$, and a dyadic cube $Q_k$ that is $\delta$-non LCV for $\widetilde{\mu}_k$, such that
$$|\langle T(\psi \widetilde{\mu}_k),1\rangle_{\widetilde{\mu}_k}|\leq\frac{1}{k}\ell(Q_k)^s
$$
for every $\psi\in \Phi_k^{\widetilde{\mu}_k}(Q_k)$.

For each $k$, consider the measure $\mu_k = \tfrac{\widetilde{\mu}_k(x_{Q_k}+\ell(Q_k)\,\cdot\,)}{\ell(Q_k)^s}$.  Then $\mu_k$ is $\Lambda$-AD regular, the unit cube $Q_0$ is $\delta$-non LCV for $\mu_k$, and
$$|\langle T(\psi \mu_k),1\rangle_{\mu_k}|\leq\frac{1}{k} \text{ for all }\psi\in \Phi_k^{\mu_k}(Q_0),
$$
(and in particular this inequality holds for $\psi\in \Psi^{\mu_k}_k$).

By passing to a subsequence if necessary, we may assume that there is a $\Lambda$-AD regular measure $\mu$ such that $Q_0$ is $\delta$-non LCV for $\mu$, and $\mu_k$ converges to $\mu$ weakly.

On the other hand, Lemma \ref{smalldiff} is applicable (a sequence of $\Lambda$-AD-regular measures are certainly $\Lambda$-reasonable) with $\gamma_k = \tfrac{1}{k}$, and $R_k=k$.  Applying the lemma, we conclude that $\mu$ is reflectionless.  By hypothesis, $\mu$ therefore takes the form $\mu=c\mathcal{H}^s|L$ for some $s$-plane $L$ and $c>0$.  But this measure cannot have a $\delta$-non LCV cube.  This contradiction proves the lemma, and with it the proposition.

\section{An Extremal Reflectionless Measure} \label{extremalsection} We now prove the existence of an extremal $\Lambda$-nice reflectionless measure for smooth non-degenerate CZOs.  This extremal measure will form a key tool in the argument asserting Proposition \ref{onlytrivmeas}.  We shall use the results of Section 6 from Part I here.

 We shall also use the notation of Section 3.7 of Part I, with the function $m_d$-almost everywhere defined function $\T1_{\mu}(1)$.  One should think of $\T1_{\mu}(1)(x)$  as the difference $\int_{\R^d}K(y-x)d\mu(y) - \langle T(\varphi_0\mu), 1\rangle_{\mu}$ where $\varphi_0\in \Lip_0(\R^d)$ satisfies $\int_{\R^d}\varphi_0d\mu=1$.   Notice that the local part of the first term in the difference, say $\int_{\R^d}K(y-x)\psi_R(y)d\mu(y)$ where $\psi_R\in \Lip_0(B(0,2R))$ satisfies $\psi_R\equiv 1$ on $B(0,R)$, lies in $L^1_{\text{loc}}(m_d)$ as
 $$\int_{K}\int_{B(0,R)}\frac{1}{|x-y|^s}d\mu(y)dm_d(x)\leq Cm_d(K)^{\tfrac{d-s}{d}}\mu(B(0, R)),
 $$
 for any compact set $K\subset \R^d$.  The term $\langle T(\varphi_0\mu), \psi_R\rangle_{\mu}$ also makes sense as a bilinear form. The remaining contribution to the difference $\int_{\R^d}K(y-x)d\mu(y) - \langle T(\varphi_0\mu), 1\rangle_{\mu}$ can be written as
\begin{equation}\label{T1taildouble}\iint_{\R^d\times \R^d}[K(y-x)-K(y-z)](1-\psi_R(y))\varphi_0(z)d\mu(y)d\mu(z),
 \end{equation}
 and provided that $R$ is chosen so large as to ensure that $x\in B(0, \tfrac{R}{2})$ and $\supp(\varphi_0)\subset B(0, \tfrac{R}{2})$ this double integral converges absolutely due to the restricted growth at infinity.  The precise definition of $\T1_{\mu}(1)$ as an $m_d$-almost everywhere defined function can therefore be taken as
\begin{equation}\begin{split}\nonumber\T1_{\mu}(1)(x) & = \int_{\R^d}K(y-x)\psi_R(y)d\mu(y)+ \langle T(\varphi_0\mu), \psi_R\rangle_{\mu}\\&+\iint_{\R^d\times \R^d}[K(y-x)-K(y-z)](1-\psi_R(y))\varphi_0(z)d\mu(y)d\mu(z),
 \end{split}\end{equation}
 where $R$ is chosen sufficiently large.  The value $\T1_{\mu}(1)(x)$ is of course independent of the choice of $R$ as long as the double integral (\ref{T1taildouble}) converges absolutely.

 Outside of applying the results from Part I verbatim, the only fact that the reader needs to know in this section about $\T1_{\mu}(1)$ is that for $m_d$-almost every $x, x'\in \mathbb{R}^d$, $\T1_{\mu}(1)(x) - \T1_{\mu}(1)(x') = \int_{\mathbb{R}^d} [K(y-x)-K(y-x')]d\mu(y)$, which can be readily checked from the definition given above.

The Cotlar Lemma, Crollary 7.2 of Part I, states that if $\mu$ is a $\Lambda$-nice reflectionless measure, then $\|\T1_{\mu}(1)\|_{L^{\infty}(m_d)}\leq C$ where $C>0$ depends only on $d$, $s$, and $\Lambda$.

In this subsection, we shall assume that $\Omega \in C^{\infty}(\mathbb{S}^{d-1})$ satisfies $m\bigl(\frac{\xi}{|\xi|}\bigl)\neq 0$ for any $\xi\in \mathbb{R}^d$, where $m$ and $\Omega$ are related by \begin{equation}\label{fourier}
\mathcal{F}\Bigl(\frac{\Omega\bigl(\tfrac{\cdot}{|\cdot|}\bigl)}{|\cdot|^s}\Bigl)(\xi) = \frac{m\bigl(\tfrac{\xi}{|\xi|}\bigl)}{|\xi|^{d-s}}, \text{ for any } \xi\neq 0,
\end{equation}
here $\mathcal{F}$ is the Fourier transform operator.  This assumption guarantees that the Wiener lemma (Lemma 7.3 of Part I) holds.

\begin{prop}\label{genprop}   If there is a non-trivial $\Lambda$-nice reflectionless measure, then there exists a $\Lambda$-nice reflectionless measure $\mu^{\star}$ such that $\dist(0,\supp(\mu^{\star}))= 1$ and
$$|\T1_{\mu^{\star}}(1)(0)| = \|\T1_{\mu^{\star}}(1)\|_{L^{\infty}(m_d)}.
$$
\end{prop}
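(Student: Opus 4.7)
The plan is to construct $\mu^\star$ as an extremizer of a compactness problem over the class of $\Lambda$-nice reflectionless measures. Define
$$M := \sup\bigl\{ |\T1_\mu(1)(x)| : \mu \text{ is } \Lambda\text{-nice and reflectionless}, \; x \in \R^d \setminus \supp(\mu) \bigr\}.$$
By the Cotlar Lemma (Corollary 7.2 of Part I), $M \leq C(d,s,\Lambda) < \infty$. By hypothesis, there exists a non-trivial $\Lambda$-nice reflectionless measure $\mu_0$, and the Wiener Lemma (Lemma 7.3 of Part I, applicable thanks to the non-degeneracy assumption on the multiplier $m$) ensures $\T1_{\mu_0}(1) \not\equiv 0$. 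Combined with the continuity of $\T1_{\mu_0}(1)$ on the open set $\R^d \setminus \supp(\mu_0)$, this forces $M > 0$.

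Choose a maximizing sequence $(\mu_k, x_k)$ with $x_k \notin \supp(\mu_k)$ and $|\T1_{\mu_k}(1)(x_k)| \to M$. The class of $\Lambda$-nice reflectionless measures is closed under the rescaling $\mu \mapsto \lambda^{-s}\mu(x_0 + \lambda\,\cdot\,)$, and because $K$ is $(-s)$-homogeneous, this transformation identifies the value of $\T1$ at corresponding points. Applying it with $x_0 = x_k$ and $\lambda = r_k := \dist(x_k, \supp(\mu_k))$ produces a new sequence $\nu_k$ in the class, with $\dist(0, \supp(\nu_k)) = 1$ and $|\T1_{\nu_k}(1)(0)| \to M$. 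By the weak compactness of uniformly $\Lambda$-nice measures, pass to a subsequence $\nu_k \to \mu^\star$ weakly. Corollary \ref{reflweaklim} (whose proof applies since $\Lambda$-niceness implies $\Lambda$-reasonableness) gives that $\mu^\star$ is a $\Lambda$-nice reflectionless measure, and the Portmanteau Theorem applied to the open unit ball (on which every $\nu_k$ vanishes) yields $\mu^\star(B(0,1)) = 0$, whence $\dist(0, \supp(\mu^\star)) \geq 1$.

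To conclude, three further facts are needed: (i) $\T1_{\nu_k}(1)(0) \to \T1_{\mu^\star}(1)(0)$, which combined with $|\T1_{\nu_k}(1)(0)| \to M > 0$ forces $|\T1_{\mu^\star}(1)(0)| = M$ and thus the non-triviality of $\mu^\star$; (ii) $\dist(0, \supp(\mu^\star)) = 1$, which, if the distance strictly exceeds $1$, is rectified by the same rescaling as in the previous paragraph, leaving $|\T1_{\mu^\star}(1)(0)|$ unchanged; (iii) $|\T1_{\mu^\star}(1)(0)| = \|\T1_{\mu^\star}(1)\|_{L^\infty(m_d)}$, which is immediate from the definition of $M$, since any $y \in \R^d \setminus \supp(\mu^\star)$ with $|\T1_{\mu^\star}(1)(y)| > M$ would contradict the defining supremum.

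The main obstacle is (i). Using the cutoff representation of $\T1_{\nu_k}(1)(0)$ given in the excerpt, I split it into a local integral $\int K(y)\psi_R(y)\,d\nu_k(y)$ (which converges by weak convergence, since on $\R^d \setminus B(0,1)$ the integrand $K\psi_R$ agrees with a bounded continuous function and both $\nu_k$ and $\mu^\star$ vanish on $B(0,1)$), a bilinear pairing $\langle T(\varphi_0\nu_k), \psi_R\rangle_{\nu_k}$ (which converges by the arguments underlying Lemma \ref{smalldiff}, since the $\nu_k$ are uniformly reasonable), and a tail double integral whose integrand is uniformly dominated by $C\varphi_0(z)/|y|^{s+\alpha}$ (which converges by dominated convergence via the uniform tail estimate of Lemma \ref{reasint}). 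A consistent choice of the normalizing bump $\varphi_0$ is made by fixing it with support in an annulus around the unit sphere where the $\nu_k$ carry uniformly positive mass; such an annulus exists because the uniform lower bound $|\T1_{\nu_k}(1)(0)| \geq M/2 > 0$ together with the decay of the kernel $K$ at infinity precludes the escape of the $\nu_k$-mass to infinity.
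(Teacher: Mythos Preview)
Your approach is essentially the paper's: set up an extremal problem over the class of $\Lambda$-nice reflectionless measures, normalize by translation/dilation so that $\dist(0,\supp)=1$, and extract a maximizer by weak compactness. The paper's supremum $\mathcal{Q}$ is taken over normalized measures evaluated at $0$, while your $M$ ranges over all pairs $(\mu,x)$ with $x\notin\supp(\mu)$; these are equivalent by the scaling invariance you describe.

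There is, however, a small but genuine gap. In two places you implicitly need that $\T1_\mu(1)$ vanishes $m_d$-a.e.\ on $\supp(\mu)$, which is Corollary~6.6 of Part~I. First, for $M>0$: the Wiener Lemma gives $\T1_{\mu_0}(1)\not\equiv 0$ in $L^\infty(m_d)$, but this alone does not produce a point \emph{off} $\supp(\mu_0)$ where $\T1_{\mu_0}(1)\neq 0$; one needs to know that the non-zero set lies (up to $m_d$-null sets) in the complement of the support. Second, for (iii): you only argue that $|\T1_{\mu^\star}(1)(y)|\le M$ for $y\notin\supp(\mu^\star)$, which falls short of $\|\T1_{\mu^\star}(1)\|_{L^\infty(m_d)}=M$ unless you also control points of $\supp(\mu^\star)$. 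The paper invokes Corollary~6.6 at exactly these junctures.

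On the convergence step (i) and the non-escape of mass, the paper streamlines matters by citing two black boxes from Part~I: Corollary~6.5 (a quantitative statement that $|\T1_{\mu}(1)(0)|\ge c>0$ forces $\mu(B(0,M'))\ge c'>0$) to prevent mass escape, and Lemma~8.1 (weak convergence of $\mu_k$ implies convergence of $\T1_{\mu_k}(1)(0)$ under uniform diffuseness) for the limit. Your by-hand argument is in the right spirit, but the treatment of the normalizing bump $\varphi_0$ is slightly circular: you need $\varphi_0$ with $\int\varphi_0\,d\nu_k=1$ to even make sense of $\T1_{\nu_k}(1)(0)$, yet you propose to choose its support using a mass lower bound that is itself derived from $|\T1_{\nu_k}(1)(0)|\ge M/2$. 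The paper's Corollary~6.5 resolves this cleanly.
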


We now set up an extremal problem whose solution will provide the measure $\mu^{\star}$ whose existence is claimed in the statement of Proposition \ref{genprop}.  

Define $\mathcal{F}$ to be the set of non-trivial $\Lambda$-nice reflectionless measures $\mu$.  We suppose that $\mathcal{F}\neq \varnothing$.

Set $\mathcal{Q} = \sup\{|\T1_{\mu}(1)(0)|: \mu\in \mathcal{F}\text{ with } \dist(0, \supp(\mu))=1\}$.

\begin{cla}  $\mathcal{Q}>0$.
\end{cla}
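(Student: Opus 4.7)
The plan is to argue by contradiction. Suppose $\mathcal{Q}=0$. I will show that this forces every $\mu\in\mathcal{F}$ to be the zero measure, contradicting the standing assumption $\mathcal{F}\neq\varnothing$.

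Fix any $\mu\in\mathcal{F}$ and any point $x_0\in \R^d$ with $r:=\dist(x_0,\supp(\mu))>0$. Define the translated and rescaled measure $\tilde\mu(A):=r^{-s}\mu(x_0+rA)$. Because the CZ kernel $K$ is translation-invariant and homogeneous of degree $-s$, and because both $\Lambda$-niceness and reflectionlessness are preserved under such an affine rescaling (a standard scaling argument that appears in Part I), one has $\tilde\mu\in\mathcal{F}$, and by construction $\dist(0,\supp(\tilde\mu))=1$, so $\tilde\mu$ competes in the supremum defining $\mathcal{Q}$. Carrying the affine change of variable through each of the three terms in the definition of $\T1_{\cdot}(1)$ recalled above, and matching the auxiliary cut-off $\tilde\varphi_0$ for $\tilde\mu$ to a compatibly translated-rescaled version of a $\varphi_0$ chosen for $\mu$, one arrives at the clean transformation rule
\[
\T1_{\tilde\mu}(1)(0)=\T1_{\mu}(1)(x_0).
\]
The reflectionlessness of $\mu$ is the key input: it renders the constant $\langle T(\varphi_0\mu),1\rangle_{\mu}$ embedded in the definition of $\T1_\mu(1)$ independent of the admissible choice of $\varphi_0$, so no additive ambiguity arises when matching the two sides.

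Applying the hypothesis $\mathcal{Q}=0$ to $\tilde\mu$ now gives $\T1_{\tilde\mu}(1)(0)=0$, and therefore $\T1_\mu(1)(x_0)=0$. Since $x_0$ was arbitrary in the open complement of $\supp(\mu)$ (on which $\T1_\mu(1)$ is given by a pointwise absolutely convergent integral formula), we conclude that $\T1_\mu(1)\equiv 0$ on $\R^d\setminus\supp(\mu)$, and in particular $m_d$-almost everywhere on the complement of the support. Finally, Lemma 7.3 of Part I (the Wiener-type lemma), whose hypothesis is satisfied thanks to the standing non-degeneracy condition $m\bigl(\tfrac{\xi}{|\xi|}\bigl)\neq 0$, asserts that the only $\Lambda$-nice reflectionless measure $\mu$ with $\T1_\mu(1)=0$ $m_d$-a.e.\ outside $\supp(\mu)$ is the zero measure. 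This contradicts $\mu\in\mathcal{F}$ and proves $\mathcal{Q}>0$.

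The main obstacle lies in justifying the transformation rule $\T1_{\tilde\mu}(1)(0)=\T1_\mu(1)(x_0)$ in the first step, because the definition of $\T1_\mu(1)$ involves an implicit additive constant depending on the auxiliary cut-off. Verifying that this constant transforms in lockstep with the local integral under the affine change of variables requires careful bookkeeping using both the homogeneity and translation invariance of $K$, and it relies on the cut-off-independence afforded by reflectionlessness. Once the transformation rule is in place, the remaining two steps (propagation of the vanishing to the full complement of $\supp(\mu)$, and the application of the Wiener lemma) are essentially automatic.
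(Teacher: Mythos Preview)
Your argument is the contrapositive of the paper's proof and uses the same ingredients: the affine rescaling $\tilde\mu(\cdot)=r^{-s}\mu(x_0+r\,\cdot)$, the transformation rule $\T1_{\tilde\mu}(1)(0)=\T1_\mu(1)(x_0)$, and the Wiener lemma. The paper argues directly: for $\mu\in\mathcal{F}$, Corollary~6.6 of Part~I gives $\T1_\mu(1)=0$ $m_d$-a.e.\ on $\supp(\mu)$, so by Wiener there is $z\notin\supp(\mu)$ with $|\T1_\mu(1)(z)|>0$; then rescale at $z$ to land in the supremum with a positive value.

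There is one imprecision in your write-up. You invoke the Wiener lemma as the statement that a reflectionless $\mu$ with $\T1_\mu(1)=0$ $m_d$-a.e.\ \emph{outside} $\supp(\mu)$ must be zero. The Wiener lemma (Lemma~7.3 of Part~I) actually requires vanishing $m_d$-a.e.\ on all of $\R^d$; nothing prevents $\supp(\mu)$ from having positive Lebesgue measure. The missing step is precisely Corollary~6.6 of Part~I, which says that for a reflectionless $\mu$ one already has $\T1_\mu(1)=0$ $m_d$-a.e.\ on $\supp(\mu)$. Once you cite that, your complement-side conclusion upgrades to vanishing $m_d$-a.e.\ everywhere and Wiener applies. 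With this correction your proof is complete and matches the paper's, just run backwards. Your extended discussion of the additive constant in the transformation rule is correct but more than the paper bothers with; it simply uses $|\T1_{\tilde\mu}(1)(0)|=|\T1_\mu(1)(z)|$ as an identity implicit from Part~I.
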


\begin{proof} Pick a measure $\mu\in \mathcal{F}$.  The Wiener Lemma yields that if $|\T1_{\mu}(1)|=0$ $m_d$-almost everywhere in $\mathbb{R}^d$, then $\mu=0$. If $\mu\in \mathcal{F}$, then $ |\T1_{\mu}(1)|=0$ $m_d$-almost everywhere on $\supp(\mu)$ (Corollary 6.6 from Part I), and so there must be a point $z \not\in \supp(\mu)$ with $|\T1_{\mu}(1)(z)|>0$.  Set $p=\dist(z, \supp(\mu))$.  Consider the measure $\tilde\mu(\cdot) = \tfrac{\mu(p \cdot+ z)}{p^s}$.  Then $\tilde\mu \in \mathcal{F}$, $\dist(0, \supp(\mu))=1$, and $|\T1_{\tilde\mu}(1)(0)|=|\T1_{\mu}(1)(z)| >0$.
\end{proof}


\begin{cla}  $\mathcal{Q}<+\infty$.
\end{cla}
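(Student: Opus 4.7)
The claim will follow immediately from the Cotlar Lemma (Corollary 7.2 of Part I), which provides the uniform bound $\|\T1_{\mu}(1)\|_{L^{\infty}(m_d)}\le C(d,s,\Lambda)$ for every $\Lambda$-nice reflectionless measure $\mu$. The only point that requires care is that $\T1_\mu(1)$ is a priori only an $m_d$-a.e.\ defined function, whereas $\mathcal{Q}$ is the supremum of pointwise values at the single point $0$. I would resolve this by showing that, under the hypothesis $\dist(0,\supp(\mu))=1$, the canonical representative given by the explicit formula recalled at the start of the section is continuous on a neighborhood of $0$. From continuity, its pointwise value at $0$ equals the limit of $m_d$-averages over shrinking balls, and is therefore bounded by the $L^\infty$ norm.

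\textbf{Execution.} Fix $R$ large enough that $\supp(\varphi_0)\cup\{0\}\subset B(0,R/2)$, and examine each of the three terms in the defining formula for $\T1_\mu(1)(x)$ as $x$ ranges over $B(0,\tfrac{1}{2})$. The first term $\int K(y-x)\psi_R(y)\,d\mu(y)$ is smooth in $x$ on $B(0,\tfrac{1}{2})$ because $y\in\supp(\psi_R\mu)\subset\supp(\mu)\cap B(0,2R)$ lies at distance at least $\tfrac12$ from $x$, so one may differentiate under the integral with derivatives controlled by $\mu(B(0,2R))\le\Lambda(2R)^s$. The second term $\langle T(\varphi_0\mu),\psi_R\rangle_\mu$ is independent of $x$. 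For the third term (the double integral (\ref{T1taildouble})), the integration in $y$ is restricted to $|y|\ge R$, so $|y-x|\ge R/2$ for $x\in B(0,\tfrac12)$, and the H\"older estimate $|K(y-x)-K(y-z)|\le C|x-z|^\alpha |y|^{-s-\alpha}$ together with the tail bound from Lemma \ref{reasint} (a nice measure is in particular reasonable) makes the double integral absolutely convergent uniformly in $x\in B(0,\tfrac12)$, again allowing one to differentiate under the integral.

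\textbf{Conclusion.} Continuity of $\T1_\mu(1)$ at $0$ gives
\begin{equation*}
|\T1_\mu(1)(0)|=\lim_{r\to 0}\Bigl|\dashint_{B(0,r)}\T1_\mu(1)\,dm_d\Bigr|\le\|\T1_\mu(1)\|_{L^\infty(m_d)}\le C(d,s,\Lambda),
\end{equation*}
and taking the supremum over $\mu\in\mathcal{F}$ with $\dist(0,\supp(\mu))=1$ yields $\mathcal{Q}\le C(d,s,\Lambda)<+\infty$.

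\textbf{Main obstacle.} There is no serious obstacle; the content of the claim is essentially packaged in the Cotlar Lemma. The only subtlety is the bookkeeping required to argue that the explicit formula produces a continuous, and not merely a.e.-defined, representative at points separated from $\supp(\mu)$, which is precisely what the hypothesis $\dist(0,\supp(\mu))=1$ supplies.
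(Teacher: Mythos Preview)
Your proof is correct and follows the same approach as the paper, which simply writes ``This follows immediately from the Cotlar Lemma (Corollary 7.2 from Part I).'' Your additional care in justifying that the explicit formula for $\T1_\mu(1)$ gives a continuous representative on $B(0,\tfrac12)$ when $\dist(0,\supp(\mu))=1$---so that the pointwise value at $0$ is controlled by the $L^\infty(m_d)$ norm---fills in a detail the paper leaves implicit (it evaluates $\T1_\mu(1)$ at specific points off $\supp(\mu)$ throughout the section without comment).
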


\begin{proof}  This follows immediately from the Cotlar Lemma (Corollary 7.2 from Part I).
\end{proof}


\begin{cla}\label{extremecla}  There exists $\mu^{\star}\in \mathcal{F}$ with $\dist(0, \supp(\mu))= 1$, such that $|\T1_{\mu^{\star}}(1)(0)|=\mathcal{Q}$.
\end{cla}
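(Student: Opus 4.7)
My plan is to apply the direct method in the calculus of variations: take a maximizing sequence for the functional $\mu \mapsto |\T1_\mu(1)(0)|$ subject to the constraint $\dist(0, \supp(\mu)) = 1$, extract a weakly convergent subsequence, and verify that the limit inherits all the constraints and attains the supremum. Concretely, choose $\mu_k \in \mathcal{F}$ with $\dist(0, \supp(\mu_k)) = 1$ and $|\T1_{\mu_k}(1)(0)| \to \mathcal{Q}$. Uniform $\Lambda$-niceness makes the sequence locally uniformly bounded, so standard weak sequential compactness produces a subsequence $\mu_k \rightharpoonup \mu^{\star}$. Lower semicontinuity of mass on open balls gives $\mu^{\star}(B(x,r)) \leq \Lambda r^s$, so $\mu^{\star}$ is $\Lambda$-nice, and since $\Lambda$-nice measures are trivially $\Lambda$-reasonable, Corollary~\ref{reflweaklim} shows $\mu^{\star}$ is reflectionless. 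Finally, each $\mu_k$ gives zero mass to the open ball $B(0,1)$, so the same holds for $\mu^{\star}$, hence $\dist(0, \supp(\mu^{\star})) \geq 1$.

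The crux is to establish that $|\T1_{\mu^{\star}}(1)(0)| = \mathcal{Q}$; this forces $\mu^{\star} \ne 0$ since $\mathcal{Q} > 0$. I will prove continuity of the functional along the sequence using the explicit three-term decomposition of $\T1_\mu(1)(0)$ from the excerpt: a local integral $\int K(-y)\psi_R(y)\,d\mu(y)$, a bilinear form $c_\mu^{-1}\langle T(\varphi_0\mu),\psi_R\rangle_\mu$ with $c_\mu = \int \varphi_0\,d\mu$, and a tail double integral. Assuming $\mu^{\star} \ne 0$, fix $\varphi_0 \in \Lip_0(\R^d)$ supported away from $\overline{B(0,1)}$ with $\int \varphi_0\,d\mu^{\star} > 0$; then $c_k := \int \varphi_0\,d\mu_k \to c_{\mu^{\star}} > 0$, the local term passes to the weak limit since its integrand is bounded and continuous on $\{|y| \geq 1/2\} \supset \supp(\mu_k)$, and the tail is uniformly $O(R^{-(\alpha - \beta)})$ by Lemma~\ref{reasint} and converges by dominated convergence. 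The bilinear term is handled by splitting the $\mu_k \times \mu_k$ integral along $\{|x-y| < \delta\}$ versus $\{|x-y| \geq \delta\}$: since $H_{\varphi_0, \psi_R}$ vanishes on the diagonal, $|K(x-y)H(x,y)| \leq C|x-y|^{1-s}$, which is uniformly integrable against $\mu_k \times \mu_k$ by reasonableness giving an $O(\delta^{1-\beta})$ bound near the diagonal, while away from the diagonal the integrand is bounded and continuous, so weak convergence of $\mu_k \times \mu_k$ delivers the limit.

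The main obstacle is ruling out $\mu^{\star} \equiv 0$: if $\mu_k \rightharpoonup 0$, the normalization $c_k \to 0$ renders the above decomposition indeterminate. The plan is to use an anchor point: pick $y_k \in \supp(\mu_k) \cap \partial B(0,1)$ (which exists since $\dist(0, \supp(\mu_k)) = 1$) and pass along a further subsequence to $y^{\star} = \lim y_k \in \partial B(0,1)$. Using the H\"{o}lder continuity of $\T1_{\mu_k}(1)$ on $\R^d \setminus \supp(\mu_k)$ together with the $m_d$-a.e.\ vanishing on $\supp(\mu_k)$ (Corollary~6.6 of Part~I) -- made precise via averaging against a mollifier concentrated near $y_k$ -- one can rewrite $\T1_{\mu_k}(1)(0)$, up to a small error, as the absolutely convergent integral $\int [K(-y) - K(x_k - y)]\,d\mu_k(y)$ for $x_k$ close to $y_k$. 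Its integrand is dominated uniformly in $k$ by $C\min(|y - y^{\star}|^{-s}, |y|^{-s-\alpha})$, so under $\mu_k \rightharpoonup 0$ it tends to zero by weak convergence on bounded sets plus the uniform tail bound, contradicting $|\T1_{\mu_k}(1)(0)| \to \mathcal{Q} > 0$. This step, delicately handling the degenerate normalization and the boundary behavior of $\T1_\mu(1)$, is where I expect the main technical work to lie. Finally, should $\dist(0, \supp(\mu^{\star})) = p > 1$, the rescaled measure $\tilde\mu^{\star}(E) := p^{-s}\mu^{\star}(pE)$ is again $\Lambda$-nice and reflectionless with $\dist(0, \supp(\tilde\mu^{\star})) = 1$, and by the $s$-homogeneity of $K$ satisfies $|\T1_{\tilde\mu^{\star}}(1)(0)| = |\T1_{\mu^{\star}}(1)(0)| = \mathcal{Q}$; renaming $\tilde\mu^{\star}$ as $\mu^{\star}$ gives the required extremal measure.
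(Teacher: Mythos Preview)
Your overall strategy (maximizing sequence, weak compactness, reflectionlessness via Corollary~\ref{reflweaklim}, rescaling at the end) matches the paper's exactly, and your continuity argument for $\T1_\mu(1)(0)$ under weak convergence---the three-term decomposition with the near-diagonal/off-diagonal splitting of the bilinear form---is essentially what the paper packages as Lemma~8.1 of Part~I. That part is fine.

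The genuine gap is in your non-triviality step, the ``anchor point'' argument. You want to conclude that the mollified average of $\T1_{\mu_k}(1)$ over a small ball near $y_k\in\supp(\mu_k)$ is small, invoking Corollary~6.6 of Part~I (the $m_d$-a.e.\ vanishing of $\T1_{\mu_k}(1)$ on $\supp(\mu_k)$). But this fails for two reasons. First, for $s<d$ the support of a $\Lambda$-nice measure typically has $m_d$-measure zero, in which case Corollary~6.6 is vacuous and the mollified average is governed entirely by values of $\T1_{\mu_k}(1)$ \emph{off} the support. Second, $\T1_{\mu_k}(1)$ need not be small near $\supp(\mu_k)$ from the outside: for the model reflectionless measure $\mu=\mathcal{H}^{d-1}|_L$ with $s=d-1$, one has $\T1_\mu(1)$ equal to a nonzero constant on each side of the hyperplane $L$ and zero on $L$ itself---a jump discontinuity. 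So the Cotlar bound is all you get for the mollified average, and that does not contradict $|\T1_{\mu_k}(1)(0)|\to\mathcal{Q}>0$. Your difference-integral representation $\int[K(y)-K(y-x_k)]\,d\mu_k(y)$ does tend to zero if $\mu_k\rightharpoonup 0$, but it only equals $\T1_{\mu_k}(1)(0)-\T1_{\mu_k}(1)(x_k)$, and you have no control on the second term.

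The paper avoids this entirely: since $|\T1_{\mu_j}(1)(0)|\geq\mathcal{Q}/2>0$ for all $j$, Corollary~6.5 of Part~I supplies $M'=M'(\mathcal{Q})$ and $c(\mathcal{Q})>0$ with $\mu_j(B(0,M'))\geq c(\mathcal{Q})$ uniformly in $j$. This mass lower bound passes to the weak limit on the closed ball, so $\mu^{\star}(\overline{B(0,M')})\geq c(\mathcal{Q})>0$ directly. That quantitative ``large $\T1_\mu(1)(0)$ forces nearby mass'' statement is the missing ingredient in your argument.
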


\begin{proof}
For each $j\in \mathbb{N}$, choose $\mu_j\in \mathcal{F}$ with $\dist(0, \supp(\mu_j))=1$, satisfying $|\T1_{\mu_j}(1)(0)|\geq \mathcal{Q}(1-2^{-j-1})\geq \tfrac{\mathcal{Q}}{2}$.
Then, by Corollary 6.5 in Part I, there exists $M'=M'(\mathcal{Q})$ such that $\mu_j(B(0, M'))\geq c(\mathcal{Q})$ for each $j$.  We may pass to a subsequence that converges to a $\Lambda$-nice reflectionless measure $\mu$ (Corollary \ref{reflweaklim} of this paper).  From standard weak semi-continuity properties of the weak limit, we have that $\dist(0, \supp(\mu))\geq 1$, and $\mu(\overline{B(0,M')})\geq c(\mathcal{Q})$.  Since the sequence of measures $\mu_j$ are uniformly diffuse (see Section \ref{nonstandard}, where it is shown that even a sequence of $\Lambda$-reasonable measures is uniformly diffuse) the convergence result Lemma 8.1 in Part I is applicable, and yields that $|\T1_{\mu}(1)(0)|=\mathcal{Q}$.  Fix $p=\dist(0,\supp(\mu))$.  Setting $\mu^{\star}(\,\cdot\,) = \tfrac{\mu(p\,\cdot\,)}{p^s}$ yields the claim.
\end{proof}

\begin{proof}[The proof of Proposition \ref{genprop}]    Consider the measure $\mu^{\star}$ constructed in Claim \ref{extremecla}, and suppose that $|\T1_{\mu^{\star}}(1)(0)|<\|\T1_{\mu^{\star}}(1)\|_{L^{\infty}(m_d)}$.  As a result of Corollary 6.6 from Part I, there exists $x\not\in \supp(\mu^{\star})$ with $|\T1_{\mu^{\star}}(1)(x)|>|\T1_{\mu^{\star}}(1)(0)|$.  But now set $p=\dist(x, \supp(\mu^{\star}))$.  Consider $\tilde\mu(\,\cdot\,) = \tfrac{\mu^{\star}(p\,\cdot\,+x)}{p^s}$.  Then $\tilde\mu\in \mathcal{F}$, and $\mathcal{Q}<|\T1_{\mu^{\star}}(1)(x)| = |\T1_{\tilde\mu}(1)(0)|$.  This is absurd.
\end{proof}

\section{The Riesz transform}

In this section, we consider the simplest, and most interesting $s$-dimensional CZO, the $s$-Riesz transform.  This is the choice of kernel $K(x)=\tfrac{x}{|x|^{s+1}}$ for $x\in \mathbb{R}^d$ (so the Riesz transform is $\mathbb{R}^d$-valued).  We will write $R_{\mu}$ instead of $T_{\mu}$, $\overline{R}_{\mu}(1)$ instead of $\T1_{\mu}(1)$, and so on.

Note that $\Omega$ is smooth, and $m\bigl(\tfrac{\xi}{|\xi|}\bigl) = c\tfrac{\xi}{|\xi|}$ for a non-zero complex number $c$, where $m$ is given by (\ref{fourier}).  Thus the Wiener Lemma, and hence Proposition \ref{genprop}, are both applicable for the $s$-Riesz transform when $s\in (0,d)$.


\subsection{The proof of Proposition \ref{onlytrivmeas}}

We shall need a lemma which accounts for the restriction to $s\in (d-1,d)$.  It is nothing more than the integral representation formula of the fractional Laplacian (from which the strong maximum principle trivially follows), but we couldn't find the statement precisely in the form we need it, so a proof is included in an appendix.

\begin{lem}\label{sharmon} Suppose that $s\in (d-1,d)$, and $\mu$ is a $\Lambda $-nice measure, with $0\not\in \supp(\mu)$.  Then
\begin{equation}\begin{split}\nonumber P.V. \int_{\mathbb{R}^d} &\frac{\overline{R}_{\mu}(1)(0)  - \overline{R}_{\mu}(1)(x)}{|x|^{2d+1-s}} dm_d(x)\\
&=\lim_{\delta\rightarrow 0}\int_{\mathbb{R}^d\backslash B(0,\delta)}\frac{\overline{R}_{\mu}(1)(0) - \overline{R}_{\mu}(1)(x)}{|x|^{2d+1-s}} dm_d(x)=0.
\end{split}\end{equation}
\end{lem}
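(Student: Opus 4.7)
The plan is to recognize the displayed integral, up to a positive constant, as the fractional Laplacian $(-\Delta)^{\sigma/2}\overline{R}_\mu(1)(0)$ with $\sigma = d+1-s \in (1,2)$, which is the natural range for the pointwise principal-value representation of $(-\Delta)^{\sigma/2}$. Formally $\overline{R}_\mu(1) = K*\mu$ modulo an additive constant (which is killed by any positive fractional power of $-\Delta$), so the vanishing reduces to $(-\Delta)^{\sigma/2}K\equiv 0$ on $\R^d\setminus\{0\}$. To verify the latter, write $K(y) = -\tfrac{1}{s-1}\nabla_y|y|^{-(s-1)} = -\tfrac{1}{s-1}\nabla_y |y|^{-(d-\sigma)}$. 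Since $|y|^{-(d-\sigma)}$ is a positive constant multiple of the Riesz kernel of $(-\Delta)^{\sigma/2}$, we have $(-\Delta)^{\sigma/2}|y|^{-(d-\sigma)} = c\,\delta_0$ distributionally; commuting $(-\Delta)^{\sigma/2}$ past $\nabla$ yields $(-\Delta)^{\sigma/2}K = c'\nabla\delta_0$, which vanishes on $\R^d\setminus\{0\}$. Equivalently, on the Fourier side, $\widehat{K}(\xi) = c\,\xi/|\xi|^{d-s+1}$, so $|\xi|^{\sigma}\widehat{K}(\xi) = c\,\xi$, a polynomial whose inverse Fourier transform is supported at $\{0\}$.

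To turn this heuristic into a proof, I would start from the identity
\[
\overline{R}_\mu(1)(0) - \overline{R}_\mu(1)(x) = \int_{\R^d}[K(y)-K(y-x)]\,d\mu(y),
\]
which is immediate from the definition of $\overline{R}_\mu(1)$ recalled in the excerpt and holds for $m_d$-a.e.\ $x$. Substituting into the PV integral $\int_{|x|>\delta}\cdots\,dm_d(x)$ and introducing a mild further truncation (for instance, $|y-x|>\eta$ inside the $y$-integration) so that the resulting double integral is absolutely convergent, I apply Fubini and reduce the problem to verifying that
\[
\mathrm{P.V.}\int_{\R^d}\frac{K(y)-K(y-x)}{|x|^{2d+1-s}}\,dm_d(x) = 0 \quad \text{for every } y \in \supp(\mu),
\]
which is exactly the pointwise identity $(-\Delta)^{\sigma/2}K(y) = 0$ at $y \neq 0$ established above.

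The main obstacle will be justifying the Fubini interchange and removing the truncations. Near $x = 0$ the outer kernel $|x|^{-(2d+1-s)}$ is non-integrable, but since $0\notin\supp(\mu)$ the function $\overline{R}_\mu(1)$ is smooth near $0$; Taylor expansion gives $\overline{R}_\mu(1)(0)-\overline{R}_\mu(1)(x) = -\nabla\overline{R}_\mu(1)(0)\cdot x + O(|x|^2)$, the odd part integrates to zero over symmetric annuli, and the quadratic remainder is integrable against $|x|^{-(2d+1-s)}$ because $\sigma < 2$. For $x$ close to $\supp(\mu)$ the singularity of $K(y-x)$ in the $y$-integration requires a further splitting according to whether $|y-x|$ is larger or smaller than a suitable fraction of $|x|$, with each piece controlled by the $\Lambda$-niceness of $\mu$. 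At spatial infinity, the cancellation $|K(y)-K(y-x)| = O(|x|/|y|^{s+1})$ for $|y|\gg|x|$, together with $\mu(B(0,t))\le\Lambda t^s$ and $0\notin\supp(\mu)$, yields $\int_{R<|x|<2R}|\overline{R}_\mu(1)(0)-\overline{R}_\mu(1)(x)|\,dm_d(x) = O(R^d\log R)$, which is comfortably absorbed by the polynomial decay $|x|^{-(2d+1-s)}$ since $s<d$. Once these three regimes are controlled, dominated convergence removes each truncation and the proof concludes.
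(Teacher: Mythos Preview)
Your approach is essentially correct and genuinely different from the paper's, though both rest on the same underlying identity (that the Riesz kernel is $\sigma$-harmonic away from its pole, equivalently that $(-\Delta)^{\sigma/2}K$ is supported at the origin). The paper does not attempt a Fubini argument at all: instead it mollifies and truncates $\mu$ to a measure $\mu_{\rho,N}=\psi_\rho*(\chi_{B(0,N)}\mu)$ with smooth compactly supported density $g_{\rho,N}$, invokes the cited formula $\nabla g = b\,\mathrm{P.V.}\!\int\frac{R(gm_d)(x)-R(gm_d)(y)}{|x-y|^{2d+1-s}}\,dm_d(y)$ at $x=0$ (where $\nabla g_{\rho,N}(0)=0$ since $0\notin\supp(\mu)$), and then passes to the limit $N\to\infty$, $\rho\to 0$ using exactly the three estimates you list (the second-derivative bound near $0$, the $O(A^d\log A)$ growth of $\int_{B(0,A)}|\overline R_\mu(1)|\,dm_d$, and $L^1$ convergence on compact annuli). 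Your route factorizes this: first establish $(-\Delta)^{\sigma/2}K(y)=0$ for $y\neq 0$ as a standalone fact, then deduce the general case by swapping the $x$- and $y$-integrals. This is arguably cleaner, since the first step is a citable classical identity.

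Two remarks on the execution. First, the extra truncation $|y-x|>\eta$ is unnecessary: for fixed $\delta>0$, the double integral $\int_{|x|>\delta}\int_{\R^d}\frac{|K(y)-K(y-x)|}{|x|^{2d+1-s}}\,d\mu(y)\,dm_d(x)$ is already absolutely convergent (the inner integral is $O(\log|x|)$ for large $|x|$ by the cancellation and niceness arguments you sketch, and bounded for bounded $|x|$), so Fubini applies directly. Second, your distributional/Fourier argument for the pointwise vanishing of $\mathrm{P.V.}\!\int\frac{K(y)-K(y-x)}{|x|^{2d+1-s}}\,dm_d(x)$ at $y\neq 0$ is correct as a heuristic but leaves a small gap: one must check that the distributional identity $(-\Delta)^{\sigma/2}K=c\nabla\delta_0$ agrees with the pointwise principal-value integral away from the origin. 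This follows because $K\in L^1\bigl(\frac{dm_d(x)}{1+|x|^{d+\sigma}}\bigr)$ and $K$ is $C^2$ on $\R^d\setminus\{0\}$, which is the standard hypothesis under which the two notions coincide; alternatively, one can mollify $K$ and pass to the limit---which is precisely the paper's argument specialized to $\mu=\delta_0$.
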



\begin{proof}[Proof of Proposition \ref{onlytrivmeas}]  Suppose that there is a nontrivial reflectionless measure.  Consider the measure $\mu^{\star}$ provided by Proposition \ref{genprop}.  Since $|\overline{R}_{\mu^{\star}}(1)(0)| = \|\overline{R}_{\mu^{\star}}(1)\|_{L^{\infty}(m_d)}$, Lemma \ref{sharmon} implies that $\overline{R}_{\mu^{\star}}(1)$ is constant $m_d$-almost everywhere.  But then the Wiener Lemma yields that $\mu\equiv 0$.  This is a contradiction.
\end{proof}

\subsection{Weak Porosity}\label{porous}

Having proved that non-trivial $\Lambda$-nice reflectionless measures for the $s$-Riesz transform fail to exist if $s\in (d-1,d)$, we move onto a studying them for $s\leq d-1$.  We shall here prove that the support of a reflectionless for the Riesz transform is nowhere dense.  We actually prove a slightly stronger version of this statement.



\begin{prop}\label{Rieszporous}  Suppose that $\mu$ is a $\Lambda$-nice reflectionless measure for the $s$-Riesz transform, with $s\in (0,d-1]$.  For each $\eps>0$ there is a constant $\lambda=\lambda(\eps)>0$ such that if $\mu(B(x,r))>\eps r^s$, then there is a ball $B'\subset B(x,3r)$ of radius $\lambda r$ that does not intersect $\supp(\mu)$.
\end{prop}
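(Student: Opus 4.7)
The plan is a contradiction and compactness argument. Suppose the statement fails for some $\eps>0$: for each $k\in\mathbb{N}$, we obtain a $\Lambda$-nice reflectionless measure $\mu_k$ and a ball $B(x_k,r_k)$ with $\mu_k(B(x_k,r_k))>\eps r_k^s$ such that every ball $B\subset B(x_k,3r_k)$ of radius $r_k/k$ meets $\supp(\mu_k)$. Rescaling via $\nu_k(\,\cdot\,)=r_k^{-s}\mu_k(x_k+r_k\,\cdot\,)$ yields $\Lambda$-nice reflectionless measures with $\nu_k(B(0,1))>\eps$ and $\supp(\nu_k)$ which is $1/k$-dense in $B(0,3)$. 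Extracting a weakly convergent subsequence $\nu_k\rightharpoonup\nu$, Corollary~\ref{reflweaklim} together with the lower semicontinuity of weak limits on closed sets produces a $\Lambda$-nice reflectionless measure $\nu$ with $\nu(\overline{B(0,1)})\geq\eps$, so in particular $\nu\not\equiv 0$.

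The next step is to show that $\overline{R}_\nu(1)$ is $m_d$-a.e.\ equal to a constant vector on $B(0,3)$. For each $k$, Corollary~6.6 of Part~I gives $\overline{R}_{\nu_k}(1)=0$ $m_d$-a.e.\ on $\supp(\nu_k)$, while Lemma~8.1 of Part~I provides pointwise convergence $\overline{R}_{\nu_k}(1)(z)\to\overline{R}_\nu(1)(z)$ whenever $z\notin\supp(\nu)$. Combining these with the $1/k$-density of $\supp(\nu_k)$ in $B(0,3)$, the vanishing is propagated to the limit to conclude that $\overline{R}_\nu(1)$ is $m_d$-a.e.\ constant on $B(0,3)$. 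Absorbing this constant into the normalizing function $\varphi_0$ in the definition of $\overline{R}_\nu(1)$ (cf.\ Section~\ref{extremalsection}), we may take the constant to be zero, so the distributional divergence $\nabla\cdot\overline{R}_\nu(1)$ vanishes in the sense of distributions on $B(0,3)$.

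The contradiction now comes from the positivity of $\nabla\cdot K$ when $s\leq d-1$. A direct computation yields $\nabla\cdot K(x)=(d-s-1)|x|^{-(s+1)}$ for $x\neq 0$; a flux calculation around the origin shows that, as a distribution on $\R^d$, $\nabla\cdot K$ coincides with the \emph{positive} locally integrable function $(d-s-1)|x|^{-(s+1)}$ when $s<d-1$, and equals $|\mathbb{S}^{d-1}|\,\delta_0$ when $s=d-1$. A standard integration by parts gives the distributional identity $\nabla\cdot\overline{R}_\nu(1)=\pm(\nabla\cdot K)*\nu$ (the sign is immaterial), so $(\nabla\cdot K)*\nu\equiv 0$ on $B(0,3)$. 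For $s=d-1$ this reads $\nu|_{B(0,3)}\equiv 0$, which directly contradicts $\nu(\overline{B(0,1)})\geq\eps$. For $s<d-1$, testing against any non-negative $\varphi\in C_c^\infty(B(0,3))$ with $\varphi\not\equiv 0$ gives $\int_{\R^d} g(y)\,d\nu(y)=0$, where $g(y)=\int\varphi(x)|x-y|^{-(s+1)}\,dx$ is \emph{strictly positive} for every $y\in\R^d$; hence $\nu\equiv 0$, again contradicting $\nu(\overline{B(0,1)})\geq\eps$.

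I expect the main obstacle to lie in the second step: because mass can concentrate under weak convergence, the limit $\nu$ need not satisfy $B(0,3)\subseteq\supp(\nu)$, so Corollary~6.6 of Part~I does not apply to $\nu$ directly. One must instead invoke Lemma~8.1 pointwise at a dense family of points $z\in B(0,3)\setminus\supp(\nu)$, each within $1/k$ of some $w_k\in\supp(\nu_k)$ where $\overline{R}_{\nu_k}(1)$ vanishes, and control the potentially discontinuous behaviour of $\overline{R}_{\nu_k}(1)$ across $\supp(\nu_k)$ in order to secure that the limit value is (the same) constant throughout $B(0,3)$.
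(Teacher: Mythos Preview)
Your divergence computation in the third step is exactly the right idea and coincides with what the paper does. The genuine gap is precisely where you flagged it: the passage from the density of $\supp(\nu_k)$ in $B(0,3)$ to the constancy of $\overline{R}_\nu(1)$ on $B(0,3)$ does not go through. The statement from Corollary~6.6 of Part~I, that $\overline{R}_{\nu_k}(1)=0$ $m_d$-a.e.\ on $\supp(\nu_k)$, is typically \emph{vacuous}: for a $\Lambda$-nice measure with $s<d$, the support will usually have $m_d$-measure zero, so the corollary imposes no constraint whatsoever on the values of $\overline{R}_{\nu_k}(1)$. Even when $\supp(\nu_k)$ has positive Lebesgue measure, $\overline{R}_{\nu_k}(1)$ has no continuity across $\supp(\nu_k)$, and Lemma~8.1 of Part~I only gives convergence at points \emph{outside} $\supp(\nu)$, so there is no mechanism to transport the (possibly vacuous) vanishing from a moving $m_d$-null set to the limit. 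Your own final paragraph correctly identifies this obstacle; I would add that it is not merely technical but structural, and I do not see how to repair the compactness route without essentially reproving the quantitative statement the paper uses.

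The paper bypasses the limit argument entirely. It proves directly, for the original measure $\mu$, that $\mu(B(x,r))\geq\eps r^s$ forces
\[
\int_{B(x,3r)}|\overline{R}_\mu(1)|\,dm_d\;\geq\; c\,\eps\, m_d(B(x,3r)),
\]
via the same divergence identity you wrote down: mollify $\overline{R}_\mu(1)$, compute $\operatorname{div}(\psi_{1/2}*\overline{R}_\mu(1))$ as a positive quantity (a constant times $\psi_{1/2}*\mu$ when $s=d-1$, or its Riesz potential when $s<d-1$), and integrate by parts against a cutoff. Then the general Porosity Lemma~6.7 of Part~I converts this integral lower bound into the existence of a hole of scale $\lambda r$. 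So the ``content'' of your argument (positivity of $\nabla\cdot K$) is correct; what is needed is to use it quantitatively at the level of $\mu$ itself rather than at the level of a hypothetical limit $\nu$.
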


Taking into account the general Porosity result in Lemma 6.7 of Part I, Proposition \ref{Rieszporous} will follow immediately from the following result.


\begin{lem}  Let $s\in (0,d-1]$.  There is a constant $c>0$, such that if $\mu(B(x,r))\geq \eps r^s$, then  $\int_{B(x,3r)}|\overline{R}_{\mu}(1)| dm_d>c\eps m_d(B(x,3r))$.
\end{lem}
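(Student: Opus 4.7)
The plan is to reduce the desired lower bound on $\int_{B(x,3r)}|\overline{R}_\mu(1)|\,dm_d$ to a one-sided lower bound on the integral against $\mu$ of a non-negative function. I would achieve this by pairing $\overline{R}_\mu(1)$ with a well-chosen mean-zero test vector field $\Phi$ satisfying $\|\Phi\|_\infty\le 1$ and $\supp(\Phi)\subset B(x,3r)$. For any such $\Phi$, the definition of $\overline{R}_\mu(1)$ from Section~\ref{extremalsection}, combined with a Fubini rearrangement (the mean-zero property kills the constant terms and controls the $\varphi_0$-correction), yields the identity
\[
\int_{\R^d} \Phi\cdot \overline{R}_\mu(1)\, dm_d \;=\; \int_{\R^d}\widetilde{\psi}\,d\mu, \quad \widetilde{\psi}(z):=\int_{\R^d}\Phi(y)\cdot K(z-y)\,dm_d(y),
\]
after which the normalization of $\Phi$ gives $\int_{B(x,3r)}|\overline{R}_\mu(1)|\,dm_d \ge \bigl|\int_{\R^d}\widetilde{\psi}\,d\mu\bigr|$.

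The choice of $\Phi$ I propose is $\Phi=\nabla\phi$ with $\phi(y):=r\,\eta((y-x)/r)$, where $\eta\in\Lip(\R^d)$ is a radial bump equal to $1$ on $B(0,2)$, vanishing outside $B(0,3)$, with $\|\nabla\eta\|_\infty\le 1$. Then $\Phi$ has mean zero as the gradient of a compactly supported function, $\|\Phi\|_\infty\le 1$, and $\supp(\Phi)\subset B(x,3r)\setminus B(x,2r)$. A distributional integration by parts based on the classical identity $\nabla_w\cdot K(w)=(d-s-1)|w|^{-(s+1)}$ for $w\ne 0$, augmented by a $\sigma_{d-1}\delta_0$ mass at the origin in the endpoint case $s=d-1$, yields
\[
\widetilde{\psi}(z)\;=\;\begin{cases}(d-s-1)\displaystyle\int_{\R^d}\phi(y)\,|z-y|^{-(s+1)}\,dm_d(y), & s<d-1,\\[4pt]
\sigma_{d-1}\,\phi(z), & s=d-1.\end{cases}
\]
Since $\phi\ge 0$ and $d-s-1\ge 0$, one has $\widetilde{\psi}\ge 0$ pointwise in both cases --- this positivity is precisely where the hypothesis $s\le d-1$ is essential.

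For any $z\in B(x,r)$ we have $B(z,r)\subset B(x,2r)$, a set on which $\phi\equiv r$, and a direct calculation shows $\widetilde{\psi}(z)\ge \sigma_{d-1}r^{d-s}$ in either case (via $\int_{B(z,r)}|z-y|^{-(s+1)}\,dm_d = \sigma_{d-1}r^{d-s-1}/(d-s-1)$ when $s<d-1$, and by the direct formula when $s=d-1$). Using $\widetilde{\psi}\ge 0$ globally to discard any contribution from outside $B(x,r)$,
\[
\int_{\R^d}\widetilde{\psi}\,d\mu\;\ge\;\sigma_{d-1}r^{d-s}\mu(B(x,r))\;\ge\;\sigma_{d-1}\eps\, r^d,
\]
and combining with the duality inequality above gives $\int_{B(x,3r)}|\overline{R}_\mu(1)|\,dm_d\ge c\,\eps\,m_d(B(x,3r))$ with $c=c(d,s)>0$, as required.

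The main obstacle is to execute the integration by parts rigorously. Since $\phi$ is merely Lipschitz, I would mollify on a small scale first; the estimates are stable under this. The boundary term arising on a small sphere $\partial B(z,\delta)$ works out to $\delta^{d-1-s}\sigma_{d-1}\phi(z)(1+o(1))$, which vanishes as $\delta\to 0$ for $s<d-1$ and produces the delta contribution $\sigma_{d-1}\phi(z)$ exactly in the endpoint case $s=d-1$. The Fubini exchanges needed to establish the duality identity, in turn, are permissible by the restricted growth at infinity of the nice measure $\mu$ combined with the compact support of $\Phi$ and the mean-zero condition $\int\Phi\,dm_d=0$.
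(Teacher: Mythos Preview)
Your proof is correct and follows essentially the same approach as the paper's. Both arguments test $\overline{R}_\mu(1)$ against the gradient of a cutoff function supported in $B(x,3r)$ and then invoke the divergence identity $\nabla\cdot K(w)=(d-s-1)|w|^{-(s+1)}$ (with the $\delta$-mass at the origin when $s=d-1$). The only difference is the order of operations: the paper mollifies first, writing $\psi_{1/2}*\overline{R}_\mu(1)$ as (up to constant) the Riesz transform of the smooth density $\psi_{1/2}*\mu$, takes its divergence pointwise, and then integrates by parts against the cutoff; you instead pass to the $\mu$-side via Fubini and carry out the integration by parts directly on the kernel, handling the singularity by excising $B(z,\delta)$. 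The resulting lower bounds are identical.
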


\begin{proof}  We may assume that $x=0$ and $r=1$.  Let $\psi_{\tfrac{1}{2}}$ be a non-negative bump function supported in $B(0,\tfrac{1}{2})$, with $\int_{\mathbb{R}^d} \psi_{\tfrac{1}{2}}dm_d=1$.  Then $(\psi_{1/2}*\mu)(B(0,2))\geq c\eps$.  There is a positive constant $b=b(s)$ such that
$$\text{div}(\psi_{\tfrac{1}{2}}*\overline{R}_{\mu}(1))(x) = \begin{cases} b(\psi_{\tfrac{1}{2}}*\mu)(x) \text{ if }s=d-1\\ b\int_{\mathbb{R}^d} \tfrac{(\psi_{1/2}*\mu)(y)}{|x-y|^{s+1}}dm_d(y) \text{ if }s<d-1.\end{cases}
$$
Indeed, for fixed $x'\in \mathbb{R}^d$,
$$\psi_{\tfrac{1}{2}}*\overline{R}_{\mu}(1)(x)- \psi_{\tfrac{1}{2}}*\overline{R}_{\mu}(1)(x') = \int_{\mathbb{R}^d}[K(y-x)-K(y-x')]d(\psi_{\tfrac{1}{2}}*\mu)(y),
$$
from which the formulas follow from differentiating the kernel.

On the other hand, if $\varphi\in C^{\infty}_0(\mathbb{R}^d)$, then
$$\int_{\mathbb{R}^d}[\psi_{1/2}*\overline{R}_{\mu}(1)]\cdot \nabla \varphi dm_d = - \int_{\mathbb{R}^d} \text{div}(\psi_{1/2}*\overline{R}_{\mu}(1)) \varphi dm_d.$$
Choose $\varphi$ to be nonnegative, with bounded gradient, and satisfying $\varphi \equiv 1$ on $B(0,2)$, $\supp(\varphi)\subset B(0,3)$.  Then
\begin{equation}\begin{split}\nonumber C\int_{B(0,3)}|\overline{R}_{\mu}(1)| dm_d &\geq \Bigl|\int_{\mathbb{R}^d}[\psi_{1/2}*\overline{R}_{\mu}(1)]\cdot \nabla \varphi dm_d\Bigl|\\&\geq \int_{B(0,2)}\text{div}(\psi_{1/2}*\overline{R}_{\mu}(1))  dm_d\geq c\eps,
\end{split}\end{equation}
as required.
\end{proof}

\section{Behaviour at Infinity} \label{behaviourinfinity} The growth of a reflectionless measure at infinity is something we do not yet understand particularly well.  Studying tangent measures at infinity formed an important part of Preiss's proof of the rectifiability of a measure $\mu$ for which the limit $\lim_{r\rightarrow 0}D_{\mu}(B(x,r))$ exists for $\mu$-almost every $x\in \R^d$ \cite{Pre}, and there is a hope that studying the behaviour of reflectionless measures at infinity could help shed some light on Question \ref{reflconj}.

In this section, we make some elementary remarks about the behaviour of reflectionless measures at infinity in order to introduce a couple of simple ideas.

\begin{lem}\label{nomeanzero} Suppose that $\mu$ is a reflectionless measure for a CZO $T$ satisfying the following (uniform diffuseness at infinity) condition:  For every $\eps>0$ there exists $\delta>0$ such that $$\frac{1}{\mu(B(0,R))}\iint\limits_{\substack{x,y\in B(0,R):\\ |x-y|< \delta R}}\frac{1}{R|x-y|^{s-1}}d\mu(y)d\mu(x)<\eps$$
for all sufficiently large $R>0$.

If
\begin{equation}\label{finitesint}\int_{\R^d}\frac{1}{(1+|x|)^s}d\mu(x)<\infty,
\end{equation}
then
$$\langle T(\varphi\mu), 1\rangle_{\mu}=0 \text{ for all }\varphi\in \Lip_0(\R^d) $$
(and not only those test functions with $\mu$-mean zero).
\end{lem}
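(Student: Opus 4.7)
The plan is to reduce the general case to the mean-zero case by exploiting the linearity of the pairing, and then to show that a certain scale-independent ``defect constant'' $\gamma$ vanishes under the stated hypotheses.

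First I would make sense of $\langle T(\varphi\mu),1\rangle_\mu$ even though $\varphi$ need not have $\mu$-mean zero. Choose a standard cutoff $\eta_R\in\Lip_0(\R^d)$ with $\eta_R\equiv 1$ on $B(0,R)$, $\supp\eta_R\subset B(0,2R)$, and $\|\eta_R\|_{\Lip}\leq C/R$, where $R$ is large enough that $\supp\varphi\subset B(0,R)$. The pointwise bound $|T(\varphi\mu)(x)|\leq C_\varphi|x|^{-s}$ for $|x|\geq 2R$ together with (\ref{finitesint}) makes $\int T(\varphi\mu)(x)[1-\eta_R(x)]\,d\mu(x)$ absolutely convergent, and I set
\[
\langle T(\varphi\mu),1\rangle_\mu := \langle T(\varphi\mu),\eta_R\rangle_\mu + \int T(\varphi\mu)(x)[1-\eta_R(x)]\,d\mu(x),
\]
which is independent of $R$ sufficiently large.

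Next, writing $c_R := (\int\varphi\,d\mu)/(\int\eta_R\,d\mu)$ and $f_R := \varphi - c_R\eta_R$, the function $f_R\in\Lip_0(\R^d)$ has $\mu$-mean zero, so reflectionlessness gives $\langle T(f_R\mu),1\rangle_\mu = 0$ and hence $\langle T(\varphi\mu),1\rangle_\mu = c_R\langle T(\eta_R\mu),1\rangle_\mu$. Applying the same decomposition with $\eta_R$ and $\eta_{R'}$ ($R<R'$) in place of $\varphi$ and $\eta_R$, the function $\eta_R - \lambda\eta_{R'}$ with $\lambda = \int\eta_R\,d\mu/\int\eta_{R'}\,d\mu$ is mean-zero and Lipschitz of compact support, so reflectionlessness forces
\[
\gamma := \frac{\langle T(\eta_R\mu),1\rangle_\mu}{\int\eta_R\,d\mu}
\]
to be independent of $R$. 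Hence $\langle T(\varphi\mu),1\rangle_\mu = \gamma\int\varphi\,d\mu$ for every $\varphi\in\Lip_0(\R^d)$, and the lemma reduces to proving $\gamma = 0$.

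To prove $\gamma=0$ I would use the antisymmetry identity $\langle T(\eta_R\mu),\eta_R\rangle_\mu = 0$ to rewrite $\gamma\int\eta_R\,d\mu = \langle T(\eta_R\mu),1-\eta_R\rangle_\mu$, then decompose $1-\eta_R$ into the ``far tail'' $\chi_{\{|x|>2R\}}$ and an annular transition piece $\xi_R$ supported in $B(0,2R)\setminus B(0,R)$. For the far tail, the Taylor expansion $K(x-y)=K(x)+O(|y|/|x|^{s+1})$ valid when $|x|\gg|y|$, combined with (\ref{finitesint}), yields a contribution of size $O(\mu(B(0,R))\tau_R)$ where $\tau_R := \int_{|x|>2R}(1+|x|)^{-s}\,d\mu\to 0$. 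For the annular piece, the Lipschitz bound $|K(x-y)H_{\eta_R,\xi_R}(x,y)|\leq C|x-y|^{1-s}/R$ on the support of the integrand, combined with the uniform diffuseness at infinity hypothesis applied at scale $3R$, controls the near-diagonal portion by $C\eps\,\mu(B(0,3R))$. The hard part will be the intermediate-scale annular contribution, where $|x-y|$ is comparable to $R$ and neither the sharp Lipschitz bound nor the uniform diffuseness estimate yields automatic smallness; I expect to close the estimate by selecting a subsequence $R_k\to\infty$ along which the annular doubling $\mu(B(0,2R_k))/\mu(B(0,R_k))$ is bounded (the existence of such a subsequence being guaranteed by the equivalent integrability condition $\int_1^\infty\mu(B(0,r))r^{-s-1}\,dr<\infty$). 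Along such a sequence, letting $\eps\to 0$ in the uniform diffuseness estimate forces $|\gamma|\leq C\eps$ for every $\eps>0$, yielding $\gamma=0$ and the lemma.
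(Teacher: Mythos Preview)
Your approach is essentially the same as the paper's: reduce to showing that $\langle T(\eta_R\mu),1\rangle_\mu/\int\eta_R\,d\mu$ vanishes, pass to a doubling subsequence $R_j$, split into a far tail controlled by the integrability condition and a local piece controlled by the $H$-function Lipschitz bound, and handle the local piece by combining the uniform diffuseness hypothesis (near-diagonal) with the fact that $D_\mu(B(0,3R_j))\to 0$ (intermediate range). Your extra observation that $\gamma$ is actually independent of $R$ is correct and pleasant, though the paper gets by without it since a subsequential limit suffices.

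One cosmetic issue: your decomposition $1-\eta_R=\chi_{\{|x|>2R\}}+\xi_R$ uses non-Lipschitz pieces, so the pairing $\langle T(\eta_R\mu),\xi_R\rangle_\mu$ is not literally covered by the bilinear-form definition, and your Taylor expansion $K(x-y)=K(x)+O(|y|/|x|^{s+1})$ does not hold uniformly when $|x|$ and $|y|$ are both near $2R$. The paper avoids both problems by taking $\psi_R$ supported in $B(0,\tfrac{3}{2}R)$ and pairing against the Lipschitz function $\psi_{2R}$ instead of a sharp cutoff, which leaves a genuine gap $|x-y|\geq R/2$ for the tail term and keeps all integrands in the $H$-framework. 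This is a routine fix, not a gap in your strategy.
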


Here one makes sense of $\langle T(\varphi\mu), 1\rangle_{\mu}$ by first introducing some $\psi\in \Lip_0(\R^d)$ that is identically one on the support of $\varphi$.  The condition (\ref{finitesint}) yields that $x\mapsto K(x-y)(1-\psi(x))\in L^1(\mu)$ is $y\in \supp(\varphi)$.   Therefore we may set
$$\langle T(\varphi\mu), 1\rangle_{\mu}=\langle T(\varphi\mu), \psi\rangle_{\mu} +\int_{\R^d}\varphi(y)\int_{\R^d}K(x-y)(1-\psi(x))d\mu(x)d\mu(y).
$$

Let us remark that if $s\leq 1$, then (\ref{finitesint}) already implies that $\mu$ is uniformly diffuse at infinity.  For any $s$, the uniform diffuseness condition is satisfied if $\mu$ has finite energy in the sense that
\begin{equation}\label{finitenergy}\int_{\mathbb{R}^d\times\mathbb{R}^d}\frac{1}{|x-y|^{s-1}}d\mu(x)d\mu(y)<\infty
\end{equation}
Any $\Lambda$-reasonable measure is also uniformly diffuse at infinity.

\begin{proof}  Fix $\varphi\in \Lip_0(\R^d)$.  We shall also fix a non-negative function $\psi\in \Lip_0(B(0,\tfrac{3}{2}))$ that equals $1$ everywhere on $B(0,1)$.  For $R>0$, we shall set $\psi_R(\,\cdot\,) = \psi\bigl(\tfrac{\cdot}{R}\bigl)$.

For any $R>0$ large enough to ensure that $\int_{\R^d}\psi_R d\mu>0$ the reflectionless property of $\mu$ guarantees that
$$\Bigl\langle T\Bigl(\Bigl[\varphi - \frac{\int_{\R^d}\varphi d\mu}{\int_{\R^d}\psi_Rd\mu}\psi_R\Bigl]\mu\Bigl),1\Big\rangle_{\mu}=0.
$$
Consequently, to prove the result it shall suffice to find a sequence of radii $R_j\rightarrow \infty$ such that
$$\lim_{j\rightarrow \infty}\frac{1}{\int_{\R^d}\psi_{R_j}d\mu}\langle T(\psi_{R_j}\mu),1\rangle_{\mu}=0.
$$
Since $\sup_{R>1}\frac{\mu(B(0,R))}{R^s}<\infty$, we can find an infinite sequence of radii $R_j= 3^{\ell_j}$, $\ell_j\in \mathbb{N}$, that are \emph{doubling} in the sense that $$\mu(B(0, 3R_j))\leq 9^s \mu(B(0, R_j)).$$  Indeed, for each non-doubling radius $R$, $D_{\mu}(B(0, 3R))>3^sD_{\mu}(B(0, R))$, and so there can be no infinite sequence of consecutive non-doubling radii.

Notice that $\mu(B(0, 3R_j))\leq 9^s\int_{\R^d}\psi_{R_j}d\mu\leq 81^s\mu(B(0, R_j))$.  In addition, as $\sum_j D_{\mu}(B(0, 3R_j))<\infty$, there is a sequence $\delta_j\to 0$ such that $D_{\mu}(B(0, 3R_j))\delta_j^{-(s-1)}\to 0$ as $j\to\infty$.

Let us now group together the simple estimates we shall require.  Fix a doubling radius $R_j$, then
\begin{equation}\begin{split}\nonumber|\langle T(\psi_{R_j}\mu)&, 1-\psi_{2R_j}\rangle_{\mu}|\leq \int_{|x|\geq 2R_j}\int_{|y|<\tfrac{3}{2}R_j}\frac{1}{|x-y|^s}d\mu(y)d\mu(x)\\&\leq C\mu(B(0, \tfrac{3}{2}R_j))\int_{|x|>R_j}\frac{1}{|x|^s}d\mu(x)\\&\leq C\Bigl[\int_{|x|>R_j}\frac{1}{|x|^s}d\mu(x)\Bigl]\mu(B(0,R_j)).
\end{split}\end{equation}
On the other hand, we write $\langle T(\psi_{R_j}\mu), \psi_{2R_j}\rangle_{\mu}$ as
$$\iint\limits_{B(0,3R_j)\times B(0, 3R_j)} K(x-y)(\psi_{R_j}(y)\psi_{2R_j}(x) - \psi_{R_j}(x)\psi_{2R_j}(y))d\mu(x)d\mu(y),
$$
which is bounded in absolute value by a constant multiple of
$$\iint\limits_{B(0,3R_j)\times B(0, 3R_j)}\frac{1}{R_j|x-y|^{s-1}}d\mu(x)d\mu(y).
$$
Now notice that
$$\iint\limits_{\substack{B(0,3R_j)\times B(0, 3R_j)\\|x-y|> \delta_j \cdot R_j}}\frac{1}{R_j|x-y|^{s-1}}d\mu(x)d\mu(y)\leq C\frac{D_{\mu}(B(0, 3R_j))}{\delta_j^{s-1}}\mu(B(0,R_j)).
$$
Bringing these estimates together we see that, for each $j$,
\begin{equation}\begin{split}\nonumber\frac{1}{\int_{\R^d}\psi_{R_j}d\mu}|&\langle T(\psi_{R_j}\mu), 1\rangle_{\mu}|\leq  C\int_{|x|\geq R_j}\frac{1}{|x|^s}d\mu(x)+ C\frac{D_{\mu}(B(0, 3R_j))}{\delta_j^{s-1}}\\&+C\frac{1}{\mu(B(0, 3R_j))}\iint\limits_{\substack{B(0,3R_j)\times B(0, 3R_j)\\|x-y|\leq \delta_j \cdot R_j}}\frac{1}{R_j|x-y|^{s-1}}d\mu(x)d\mu(y).
\end{split}\end{equation}
We see that $\int_{|x|\geq R_j}\frac{1}{|x|^s}d\mu(x)$ tends to zero as $j\to\infty$ due to (\ref{finitesint}).  On the other hand, the diffuseness at infinity ensures that$$\frac{1}{\mu(B(0, 3R_j))}\iint\limits_{\substack{B(0,3R_j)\times B(0, 3R_j)\\|x-y|\leq \delta_j \cdot R_j}}\frac{1}{R_j|x-y|^{s-1}}d\mu(x)d\mu(y)\rightarrow 0 \text{ as }j\to\infty.
$$
But $D_{\mu}(B(0, 3R_j))\delta_j^{-(s-1)}\to 0$ as $j\rightarrow \infty$ by construction, and so we conclude that $$\lim_{j\rightarrow \infty}\frac{1}{\int_{\R^d}\psi_{R_j}d\mu}\langle T(\psi_{R_j}\mu),1\rangle_{\mu}=0,
$$
as required.
\end{proof}

We now move onto using this lemma to prove the following proposition.

\begin{prop}\label{norieszfinitenergy}
Let $s\in (0,d)$.  The only reflectionless measure $\mu$ for the $s$-Riesz transform satisfying (\ref{finitenergy}) is the zero measure.
\end{prop}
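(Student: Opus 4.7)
The plan is to use Lemma \ref{nomeanzero} to upgrade the reflectionless identity from mean-zero test functions to all of $\Lip_0(\R^d)$, and then to plug in truncated linear functions in order to extract the Riesz energy of $\mu$. First, one verifies the hypotheses of Lemma \ref{nomeanzero}: uniform diffuseness at infinity follows from finite energy by the remark immediately after that lemma, and the tail condition $\int_{\R^d}\frac{d\mu(x)}{(1+|x|)^s}<\infty$ follows from (\ref{finitenergy}) via Fubini. Indeed, for $\mu$-a.e.\ $y_0$ the potential $\int_{\R^d}|x-y_0|^{-(s-1)}d\mu(x)$ is finite; splitting into dyadic annuli around $y_0$ yields $\mu(B(y_0,R))=o(R^{s-1})$ as $R\to\infty$ when $s>1$, while for $s\leq 1$ the finite energy condition forces $\mu(\R^d)<\infty$ outright. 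Either conclusion is more than enough to make the tail integral converge.

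Invoking Lemma \ref{nomeanzero}, one obtains $\langle R(\varphi\mu),1\rangle_\mu=0$ for every $\varphi\in \Lip_0(\R^d)$, which in its antisymmetric form reads
\[
\iint_{\R^d\times\R^d}\frac{x-y}{|x-y|^{s+1}}\bigl[\varphi(y)-\varphi(x)\bigr]d\mu(x)d\mu(y)=0.
\]
I would then plug in the test functions $\varphi_R(y)=y_i\,\eta(y/R)$, where $\eta\in\Lip_0(B(0,2))$ satisfies $\eta\equiv 1$ on $B(0,1)$; a direct computation gives $\|\varphi_R\|_{\Lip}\leq C$ independently of $R$ and $i$. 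Consequently the integrand is pointwise dominated by $C|x-y|^{-(s-1)}$, which is $\mu\times\mu$-integrable by (\ref{finitenergy}), while $\varphi_R(y)-\varphi_R(x)\to y_i-x_i$ pointwise as $R\to\infty$. Dominated convergence then produces
\[
\iint\frac{x-y}{|x-y|^{s+1}}(y_i-x_i)\,d\mu(x)d\mu(y)=0,\qquad i=1,\dots,d.
\]
Taking the $i$-th component of this vector identity and summing on $i$ collapses everything to $-\iint|x-y|^{-(s-1)}d\mu(x)d\mu(y)=0$, i.e.\ the Riesz energy of $\mu$ vanishes.

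Since the kernel $|x-y|^{-(s-1)}$ is strictly positive off the diagonal (and equal to a positive constant in the case $s=1$), the identity $\iint|x-y|^{-(s-1)}d\mu(x)d\mu(y)=0$ together with the diffuseness hypothesis forces $\mu$ to be the zero measure. The main obstacle in this argument is justifying the passage to the limit with the truncated coordinate functions; this is controlled cleanly by the uniform Lipschitz bound on $\varphi_R$ combined with (\ref{finitenergy}) providing a single integrable dominating function. The only other non-trivial step is extracting (\ref{finitesint}) from finite energy, which is routine once the splitting into dyadic annuli around a generic base point of $\mu$ is performed.
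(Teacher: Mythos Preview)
Your proof is correct and follows essentially the same route as the paper's: apply Lemma~\ref{nomeanzero} to remove the mean-zero restriction, approximate the coordinate functions $x\mapsto x_j$ by compactly supported Lipschitz functions with uniformly bounded Lipschitz norm, pass to the limit via dominated convergence against the finite-energy kernel $|x-y|^{-(s-1)}$, and sum over coordinates to force the Riesz energy to vanish. The paper is terser---it does not spell out the verification of the tail condition~(\ref{finitesint}) from finite energy, nor does it write the cutoff $y_i\,\eta(y/R)$ explicitly---but the argument is the same.
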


\begin{proof}Let us suppose that $\mu$ is a finite reflectionless measure satisfying (\ref{finitenergy}).  Fix $\varphi\in \Lip(\R^d)$, and choose a sequence  $\varphi_n\in \Lip_0(\mathbb{R}^d)$ that satisfies $\sup_n \|\varphi_n\|_{\Lip}<\infty$ and $\varphi_n(x)\rightarrow \varphi(x)$.  From Lemma \ref{nomeanzero} we have that $\langle R(\varphi_n\mu),1\rangle_{\mu}=0$ for each $n$.

Now notice that
$$\bigl|[[\varphi-\varphi_n](x)-[\varphi-\varphi_n](y)]K(x-y)\bigl|\rightarrow 0 \text{ whenever }x\neq y,
$$
and the set $\{(x,y)\in \mathbb{R}^d\times\mathbb{R}^d: x=y\}$ is a set of $\mu\times\mu$ measure zero.  In addition, there is a constant $C>0$ such that for $x\neq y$
$$\sup_n\bigl|[[\varphi-\varphi_n](x)-[\varphi-\varphi_n](y)]K(x-y)\bigl| \leq \frac{C}{|x-y|^{s-1}}.
$$
The function $(x,y)\mapsto\frac{1}{|x-y|^{s-1}}$ lies in $L^1(\mu\times\mu)$ due to the condition (\ref{finitenergy}), and so the dominated convergence theorem now yields that $$\iint\limits_{\R^d\times\R^d}K(x-y)[\varphi(x)-\varphi(y)]d\mu(x)d\mu(y)=0.$$


Now fix a co-ordinate $j\in \{1,\dots, d\}$, and consider the function $\varphi(x) = x_j$.  Then 
$$\int_{\mathbb{R}^d\times\mathbb{R}^d} \frac{(x_j-y_j)(x-y)}{|x-y|^{s+1}}d\mu(x)d\mu(y)=0.
$$
In particular, taking the $j$-th co-ordinate of this vector yields $$\int_{\mathbb{R}^d\times\mathbb{R}^d} \frac{(x_j-y_j)^2}{|x-y|^{s+1}}d\mu(x)d\mu(y)=0.
$$
But then summation over $j$ now yields
$$\int_{\mathbb{R}^d\times\mathbb{R}^d} \frac{1}{|x-y|^{s-1}}d\mu(x)d\mu(y)=0,
$$
and $\mu$ must be the zero measure.
\end{proof}

\begin{rem}The harmonic measure in $\R^2$ of the line segment $[-1,1]$ with pole at infinity is the measure $\mu$ that lies on the line $\{x_2=0\}$ with density  $d\mu(x_1,x_2) = \frac{1}{\pi}\frac{1}{\sqrt{1-x_1^2}}\chi_{[-1,1]}(x_1)dm_1(x_1)$.  We recall from \cite{MPV} that this measure has the property that the principal value of the one dimensional Riesz transform of $\mu$ is equal to zero on $\supp(\mu)$.  The above proposition in particular yields that $\mu$ is \textit{not} reflectionless in our sense\footnote{In particular, one could make a valid complaint about our use of the terminology reflectionless.}.
\end{rem}

\begin{ques}  Suppose that $\mu$ is a $\Lambda$-nice reflectionless measure for the $s$-Riesz transform with $s\in (1,d-1)$.  If $\mu\neq0$, does there exist $\eps>0$ such that
$$\mu(B(0,R))\geq R^{\tfrac{s}{2}+\eps}\text{ for all sufficiently large } R>0?
$$
\end{ques}

We would be especially interested if one could answer this question with $\eps=\tfrac{s}{2}$.

\appendix

\section{The sufficiency of the Mateu-Prat-Verdera condition}\label{squarewolffcond}

The purpose of this appendix is to provide the proof of the following well-known result.  Fix $s\in (0,d)$, and let $T$ be an $s$-dimensional CZO.

\begin{thm}  Suppose that there is a constant $C_0>0$ so that for every cube  $Q\subset \R^d$,
\begin{equation}\label{Wolfftesting}\int_{Q}\mathbb{W}_2(\chi_{Q}\mu)d\mu(x)\leq C_0\mu(Q).
\end{equation}
Then $T_{\mu}$ is bounded in $L^2(\mu)$, with norm bounded by $C\cdot C_0$, where $C$ depends on $d$, $s$ and $\alpha$.
\end{thm}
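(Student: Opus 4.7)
The plan is to apply the nonhomogeneous $T(1)$ theorem of Nazarov--Treil--Volberg, which asserts that for a measure $\mu$ with polynomial growth $\mu(B(x,r))\leq C r^s$, the uniform $L^2(\mu)$-boundedness of the family $\{T_{\mu,\delta}\}_{\delta>0}$ is equivalent to the single local testing inequality
\begin{equation}\label{testingNTV}
\int_Q|T_{\mu,\delta}\chi_Q|^2\,d\mu\leq C\mu(Q), \qquad Q\subset \R^d,\;\delta>0.
\end{equation}
Only one direction of testing is required here because $K$ is antisymmetric.

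As a preliminary, I would verify that the hypothesis \eqref{Wolfftesting} itself forces $\mu$ to be nice.  For any cube $Q$ containing $B(x,2r)$ with $\ell(Q)\asymp r$, and for $y\in B(x,r/2)$, the trivial lower bound $\mathbb{W}_2(\chi_Q\mu)(y)\geq c\bigl(\mu(B(x,r))/r^s\bigr)^2$ combined with \eqref{Wolfftesting} yields $\mu(B(x,r))^3\leq Cr^{2s}\mu(Q)$, and a short bootstrap in $r$ upgrades this to $\mu(B(x,r))\leq Cr^s$ with a constant depending only on $C_0$, $s$, $d$.

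The heart of the proof is the verification of \eqref{testingNTV}.  The antisymmetry of $K_\delta$ yields $\int_Q T_{\mu,\delta}\chi_Q\,d\mu=0$, and hence the variance identity
$$\int_Q|T_{\mu,\delta}\chi_Q|^2\,d\mu = \frac{1}{2\mu(Q)}\iint_{Q\times Q}\bigl|T_{\mu,\delta}\chi_Q(x)-T_{\mu,\delta}\chi_Q(z)\bigr|^2 d\mu(x)\,d\mu(z).$$
Writing the integrand as $\bigl|\int_Q[K_\delta(x-y)-K_\delta(z-y)]d\mu(y)\bigr|^2$, I would split the $y$-domain into the ``far'' region $\{|x-y|,|z-y|\geq 2|x-z|\}$ and its complement.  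In the far region the H\"older regularity of the kernel gives $|K_\delta(x-y)-K_\delta(z-y)|\leq C|x-z|^\alpha/\max(|x-y|,|z-y|)^{s+\alpha}$; combined with the niceness of $\mu$ this yields a pointwise bound by an absolute constant, contributing at most $C\mu(Q)$ after the double integration.  In the near region, the layer-cake identity
$$\int_{|x-y|\leq\rho,\,y\in Q}\frac{d\mu(y)}{\max(|x-y|,\delta)^s} = s\int_\delta^\rho\frac{\mu(Q\cap B(x,r))}{r^s}\frac{dr}{r}$$
and its analogue centred at $z$ allow one to bound the relevant difference by a partial Wolff potential; Fubini and the hypothesis \eqref{Wolfftesting} then absorb the resulting bilinear form into $CC_0\mu(Q)$.

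The main technical obstacle is that a naive Cauchy--Schwarz on the inner $r$-integral produces a spurious factor $\log(\ell(Q)/\delta)$, destroying uniformity in $\delta$.  The variance identity is precisely what rescues the argument:  the additional $|x-z|^\alpha$ afforded by the H\"older regularity, integrated against the $z$-variable over the relevant range of $|x-z|$, cancels this logarithm.  Once \eqref{testingNTV} is established with constant proportional to $C_0$, the NTV theorem delivers the claimed bound $\|T\|_\mu\leq CC_0$.
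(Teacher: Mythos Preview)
Your overall architecture matches the paper's: reduce to the local testing condition and then invoke the nonhomogeneous $T(1)$ theorem of Nazarov--Treil--Volberg. The point of divergence, and the place where your sketch breaks down, is the verification of the testing inequality itself.

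The gap is in your near-region estimate. After the variance identity you bound the near part of $T_{\mu,\delta}\chi_Q(x)-T_{\mu,\delta}\chi_Q(z)$ by quantities of the form $\int_\delta^{c|x-z|} D_\mu(x,r)\,\tfrac{dr}{r}$ (and the analogue at $z$), then square and integrate. You correctly observe that Cauchy--Schwarz here loses a factor $\log(\ell(Q)/\delta)$, but your proposed rescue does not work: the H\"older gain $|x-z|^\alpha$ is only available in the \emph{far} region $\{|x-y|,|z-y|\ge 2|x-z|\}$, and you have already spent it there to get the bounded contribution. In the near region $|x-y|<2|x-z|$ the difference $K_\delta(x-y)-K_\delta(z-y)$ carries no extra factor of $|x-z|^\alpha$ (take $y$ close to $x$), so there is nothing to integrate against the $z$-variable to kill the logarithm. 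The variance identity itself is neutral here: since $\int_Q T_{\mu,\delta}\chi_Q\,d\mu=0$, it is literally equivalent to the direct expansion of $\int_Q|T_{\mu,\delta}\chi_Q|^2\,d\mu$ and supplies no additional cancellation.

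The paper's argument stays with the trilinear expansion
\[
\iiint_{|x-y|>\delta,\;|x-z|>\delta} K(x-y)\cdot\overline{K(x-z)}\,d\nu(y)\,d\nu(z)\,d\nu(x),
\]
and splits according to which of $|x-y|,|x-z|,|y-z|$ is smallest. The crucial region is $U_2=\{|x-z|\le |x-y|,\;|x-z|\le |y-z|\}$, which is symmetric under swapping $x\leftrightarrow z$. Symmetrizing and using the antisymmetry $K(z-x)=-K(x-z)$ gives
\[
K(x-y)\overline{K(x-z)}+K(z-y)\overline{K(z-x)}=\overline{K(x-z)}\bigl[K(x-y)-K(z-y)\bigr],
\]
and \emph{now} H\"older continuity applies (since $|x-z|$ is the smallest side), producing the kernel $|x-z|^{-(s-\alpha)}|x-y|^{-(s+\alpha)}$. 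A pointwise Hardy-type estimate then bounds the resulting double integral by $C\,\mathbb{W}_2(\nu)(x)$. This symmetrization in the trilinear form is the missing idea; it is a sharper exploitation of antisymmetry than the mean-zero identity you used.

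One minor remark: your preliminary step deriving the growth bound $\mu(B(x,r))\le Cr^s$ from the Wolff testing hypothesis is a nice observation, though the paper simply takes it as part of the ambient setup for the $T(1)$ theorem.
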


Although this theorem is indeed well known, it is difficult to locate a proof, so we shall provide one here.  The proof is very similar to that of Theorem 4.6 of \cite{ENV}, but we are working under a slightly weaker assumption on the Wolff potential, and so prefer to avoid any integration by parts arguments.  We shall rely upon the following lemma.

\begin{lem}  There is a constant $C>0$ depending on $d$, $s$ and $\alpha$, such that for any finite measure $\nu$ and $\eps>0$,
$$\int_{\R^d}\Bigl|\int_{|x-y|>\eps}K(x-y)d\nu(y)\Bigl|^2d\nu(x)\leq C\int_{\R^d}\mathbb{W}_2(\nu)d\nu.
$$
\end{lem}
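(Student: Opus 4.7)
The plan is to expand the square into a triple integral,
\[
\int_{\R^d}\Bigl|\int_{|x-y|>\eps}K(x-y)d\nu(y)\Bigl|^2 d\nu(x) \;=\; \iiint\chi_{|x-y|>\eps}\chi_{|x-z|>\eps}\,K(x-y)\cdot K(x-z)\,d\nu(x)d\nu(y)d\nu(z).
\]
The cornerstone identity, obtained from Fubini together with the layer-cake formula $r^{-2s}=2s\int_r^\infty u^{-2s-1}du$, reads
\[
\iiint\max(|x-y|,|x-z|)^{-2s}\,d\nu(x)d\nu(y)d\nu(z)\;=\;2s\int_{\R^d}\mathbb{W}_2(\nu)(x)\,d\nu(x),
\]
so the objective becomes to dominate the triple integral by a constant multiple of $\iiint\max^{-2s}d\nu^3$. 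A quick sanity check shows that the trivial bound $|K(x-y)\cdot K(x-z)|\leq|x-y|^{-s}|x-z|^{-s}$ is insufficient on its own, since $(\min\cdot\max)^{-s}\geq\max^{-2s}$; so the H\"older smoothness of $K$ must be brought to bear.

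Using the $(y,z)$-symmetry I would restrict to $|x-y|\geq|x-z|$ (doubling), and split this region according to the size of $|y-z|$ relative to $|x-y|$. In the \emph{close} region $|y-z|\leq|x-y|/2$, the triangle inequality gives $|x-z|\geq|x-y|/2$, so both $|x-y|$ and $|x-z|$ are comparable to $\max(|x-y|,|x-z|)$. The H\"older condition on $K$ (combined with its homogeneity and $|K|\leq|x|^{-s}$) yields $|K(x-z)-K(x-y)|\leq C|y-z|^\alpha/|x-y|^{s+\alpha}$; writing $K(x-y)\cdot K(x-z)=|K(x-y)|^2+K(x-y)\cdot(K(x-z)-K(x-y))$ and using $|y-z|\leq|x-y|$, I then obtain the sharp pointwise bound $|K(x-y)\cdot K(x-z)|\leq C/\max(|x-y|,|x-z|)^{2s}$. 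The close region therefore contributes at most $C\int\mathbb{W}_2(\nu)\,d\nu$ via the cornerstone identity.

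In the \emph{far} region $|y-z|>|x-y|/2$ the triangle inequality forces $|x-y|\approx|y-z|$, so one replaces $|x-y|^{-s}$ by $2^s|y-z|^{-s}$ and carries out the remaining $x$-integration under the constraints $\eps<|x-z|\leq|x-y|<2|y-z|$. Splitting that inner integration according to whether $|x-z|<|y-z|/2$ (where a Stieltjes/layer-cake estimate gives $\int_{\eps<|x-z|<|y-z|/2}|x-z|^{-s}d\nu(x)\leq g_z(|y-z|/2)+s\int_\eps^{|y-z|/2}g_z(u)du/u$, writing $g_z(r):=\nu(B(z,r))/r^s$) or $|x-z|\geq|y-z|/2$ (where the contribution reduces to one of order $\nu(B(y,2|y-z|))/|y-z|^{2s}$), and then reassembling via the Fubini variants $\int g_z(c|y-z|)/|y-z|^s\,d\nu(y)\leq C\mathbb{W}_2(\nu)(z)$, one handles every piece except the nested cross-integral $\iint_{v\geq 2u,\,u>\eps}g_z(u)g_z(v)/(uv)\,du\,dv$. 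This is the main obstacle: a naive AM-GM bound on the cross-integral diverges logarithmically, and the correct estimate must exploit both the monotonicity $r\mapsto r^s g_z(r)$ (which gives $g_z(u)\leq g_z(v)(v/u)^s$ for $u\leq v$) and the finiteness of $\nu$ (which gives $g_z(v)\leq\nu(\R^d)/v^s$), interpolated so as to route everything back to $\int g_z(v)^2/v\,dv=\mathbb{W}_2(\nu)(z)$. This Fubini-only reorganization is, I suspect, what the authors have in mind in remarking that their argument avoids the integration-by-parts tricks of \cite{ENV}.
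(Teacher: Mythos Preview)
Your close-region estimate is fine (indeed the H\"older step is superfluous there, since $|x-z|\geq|x-y|/2$ already gives $|K(x-y)||K(x-z)|\leq C/\max^{2s}$ trivially). The genuine gap is in the far region: you never use the antisymmetry $K(-x)=-K(x)$, and without it the estimate is simply false. After taking absolute values your far-region bound reduces to controlling $\iiint_{\text{far}}|x-y|^{-s}|x-z|^{-s}\,d\nu^3$, and this quantity is \emph{not} dominated by $C\int\mathbb{W}_2(\nu)\,d\nu$ with $C=C(d,s,\alpha)$. Concretely, take any measure that is $s$-Ahlfors regular between scales $r_{\min}$ and $R$ (e.g.\ a truncated self-similar Cantor set): for every $x\in\supp(\nu)$ one has $g_x(r)\approx 1$ on $[r_{\min},R]$, so $(I_s\nu(x))^2\approx s^2\bigl(\int g_x\,dr/r\bigr)^2\approx(\log(R/r_{\min}))^2$ while $\mathbb{W}_2(\nu)(x)\approx\log(R/r_{\min})$; the ratio survives integration in $x$ and blows up as $R/r_{\min}\to\infty$. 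Your proposed rescue via ``finiteness of $\nu$'' would insert $\nu(\R^d)$ into the constant, which the lemma forbids. The nested cross-integral $\iint_{v>u}g_z(u)g_z(v)\,\tfrac{du}{u}\tfrac{dv}{v}$ really is of order $(\log)^2$ against $\mathbb{W}_2\approx\log$ in this regime, and no amount of monotonicity or Fubini reorganization can close that gap.

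What the paper actually does is different in a crucial way. The splitting is $U_1=\{|y-z|<|x-z|\}$ and $U_2=\{|y-z|\geq|x-z|\}$ (inside $|x-y|\geq|x-z|>\eps$), chosen so that $U_2$ is \emph{symmetric under swapping $x$ and $z$}. Symmetrizing and using $K(z-x)=-K(x-z)$ turns the $U_2$ integrand into $\overline{K(x-z)}\bigl[K(x-y)-K(z-y)\bigr]$, and now H\"older continuity gives the bound $C|x-z|^{-(s-\alpha)}|x-y|^{-(s+\alpha)}$. The extra factor $(|x-z|/|x-y|)^{\alpha}$ is exactly what your approach lacks, and it is what makes the resulting Hardy-type inequality
\[
\iint_{|x-y|\geq|x-z|}\frac{d\nu(y)\,d\nu(z)}{|x-z|^{s-\alpha}|x-y|^{s+\alpha}}\leq C\,\mathbb{W}_2(\nu)(x)
\]
go through (via distribution formula and Cauchy--Schwarz). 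In short: antisymmetry is used not in the ``close'' region but in the ``far'' one, precisely to kill the logarithmic divergence you identified.
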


To prove the theorem from the lemma, fix a cube $Q$ and consider the measure $\nu = \chi_{Q}\mu$.  From the lemma, and (\ref{Wolfftesting}), we find a constant $C>0$, depending only of $C_0$, $d$, $s$, and $\alpha$, such that for any $\eps>0$,
$$\int_{Q}\Bigl|\int_{Q: |y-x|>\eps}K(x-y)d\mu(y)\Bigl|^2d\mu(x)\leq C\mu(Q).
$$
But then the non-homogeneous $T(1)$-theorem \cite{NTV} yields that the CZO $T$ associated to $\mu$ is bounded in $L^2(\mu)$.

\begin{proof}  Let us expand the left hand side:
$$\iiint\limits_{\substack{x,y,z\in \R^d: \\|x-y|>\eps,\, |x-z|>\eps}}K(x-y)\cdot\overline{K(x-z)}d\nu(x)d\nu(y)d\nu(z).
$$
It is enough to estimate the absolute value of the integral with the domain of integration restricted to
$$U=\bigl\{(x,y,z)\in \R^{3d}:|x-y|\geq |x-z|>\eps\bigl\}.$$
First consider the set
$$U_1=\bigl\{(x,y,z)\in U: |y-z|<|x-z|\bigl\}.
$$
Notice that for $(x,y,z)\in U_1$, $|x-z|>\tfrac{1}{2}|x-y|$.  Thus $$|K(x-y)\cdot\overline{K(x-z)}|\leq \frac{C}{|x-y|^{2s}}.$$
But then
\begin{equation}\begin{split}\nonumber\Bigl|\iiint\limits_{\substack{(x,y,z)\in U_1}}&K(x-y)\cdot\overline{K(x-z)}d\nu(x)d\nu(y)d\nu(z)\Bigl|\\
&\leq \iiint\limits_{(x,y,z)\in \R^{3d}\,:\,|x-y|\geq |x-z|}\frac{1}{|x-y|^{2s}}d\nu(x)d\nu(y)d\nu(z).
\end{split}\end{equation}
However, the right hand side is of course dominated by a constant multiple of
$$\int_{\R^d}\int_0^{\infty}\frac{1}{r^{2s}}\iint\limits_{(y,z)\in \R^{2d}:\,|x-y|<r, |x-z|<r}d\nu(y)d\nu(z)\frac{dr}{r}d\nu(x),
$$
but this integral equals $\int_{\R^d}\mathbb{W}_2(\nu)d\nu$.

It remains to estimate
$$\Bigl|\iiint\limits_{U_2}K(x-y)\cdot\overline{K(x-z)}d\nu(x)d\nu(y)d\nu(z)\Bigl|,
$$
where
\begin{equation}\begin{split}\nonumber U_2 &= \bigl\{(x,y,z)\in U: |y-z|\geq |x-z|\bigl\}\\
&=\bigl\{(x,y,z)\in \R^{3d}: |y-z|\geq |x-z|,\; |x-y|\geq |x-z|\bigl\}.
\end{split}\end{equation}
It is at this point where we shall appeal to the facts that $K$ is antisymmetric and H\"{o}lder continuous away from the diagonal.  Notice that the set $U_2$ is symmetric under permuting $x$ and $z$.  Thus, we may estimate
$$\frac{1}{2}\iiint\limits_{U_2}|K(x-y)\cdot\overline{K(x-z)}+ K(z-y)\cdot\overline{K(z-x)}|d\nu(x)d\nu(y)d\nu(z)
$$
However, for $(x,y,z)\in U_2$, the integrand, $|K(x-y)\cdot\overline{K(x-z)}+ K(z-y)\cdot\overline{K(z-x)}|$, is bounded by
\begin{equation}\begin{split}\nonumber|\overline{K(x-z)}||K(x-y)-K(z-y)|&\leq \frac{1}{|x-z|^s}\frac{C|x-z|^{\alpha}}{|x-y|^{s+\alpha}}\\&\leq \frac{C}{|x-z|^{s-\alpha}|x-y|^{s+\alpha}}.\end{split}\end{equation}
Our goal is now to establish the pointwise estimate
\begin{equation}\nonumber
\iint\limits_{(y,z)\in \R^{2d}: |x-y|\geq |x-z|}\frac{1}{|x-z|^{s-\alpha}|x-y|^{s+\alpha}}d\nu(z)d\nu(y)\leq C\mathbb{W}_2(\nu)(x)
\end{equation}
for every $x\in \R^d$.
Appealing to the distribution formula, we first bound the left hand side of the desired inequality by a constant multiple of
$$\int_0^{\infty}\frac{\nu(B(x,r))}{r^{s+\alpha}}\int_{|x-z|<r}\frac{1}{|x-z|^{s-\alpha}}d\nu(z)\frac{dr}{r}.
$$
But,
$$\int_{|x-z|<r}\frac{1}{|x-z|^{s-\alpha}}d\nu(z)=(s-\alpha)\int_0^r\frac{\nu(B(x,t))}{t^{s-\alpha}}\frac{dt}{t}+\frac{\nu(B(x,r))}{r^{s-\alpha}},
$$
and since
$$\int_0^{\infty}\frac{\nu(B(x,r))}{r^{s+\alpha}} \frac{\nu(B(x,r))}{r^{s-\alpha}}\frac{dr}{r} = \mathbb{W}_2(\nu)(x),
$$
it suffices to estimate
$$\int_0^{\infty}\frac{\nu(B(x,r))}{r^{s+\alpha}}\int_{0}^r\frac{\nu(B(x,t))}{t^{s-\alpha}}\frac{dt}{t}\frac{dr}{r}.$$
We first use Cauchy's inequality:
$$\frac{\nu(B(x,r))}{r^{s+\alpha}}\int_{0}^r\frac{\nu(B(x,t))}{t^{s-\alpha}}\frac{dt}{t}\leq \Bigl(\frac{\nu(B(x,r))}{r^{s}}\Bigl)^2+\Bigl(\frac{1}{r^{\alpha}}\int_0^r\frac{\nu(B(x,t))}{t^s}\frac{dt}{t^{1-\alpha}}\Bigl)^2,
$$
to reduce matters to estimating
$$\int_0^{\infty}\Bigl(\frac{1}{r^{\alpha}}\int_0^r\frac{\nu(B(x,t))}{t^s}\frac{dt}{t^{1-\alpha}}\Bigl)^2\frac{dr}{r}.
$$
But $\int_0^r\frac{dt}{t^{1-\alpha}}=\frac{1}{\alpha}r^{\alpha}$, so the Cauchy-Schwarz inequality yields that
$$\Bigl(\frac{1}{r^{\alpha}}\int_0^r\frac{\nu(B(x,t))}{t^s}\frac{dt}{t^{1-\alpha}}\Bigl)^2\leq C\frac{1}{r^{\alpha}}\int_0^r\Bigl(\frac{\nu(B(x,t))}{t^s}\Bigl)^2\frac{dt}{t^{1-\alpha}},
$$
from which we deduce that
\begin{equation}\begin{split}\nonumber\int_0^{\infty}\Bigl(\frac{1}{r^{\alpha}}&\int_0^r\frac{\nu(B(x,t))}{t^s}\frac{dt}{t^{1-\alpha}}\Bigl)^2\frac{dr}{r}\\&\leq C \int_0^{\infty}\Bigl(\frac{\nu(B(x,t))}{t^s}\Bigl)^2\Bigl[\int_t^{\infty}\frac{1}{r^{\alpha}}\frac{dr}{r}\Bigl]\frac{dt}{t^{1-\alpha}}.
\end{split}\end{equation}
Evaluating the inner integral on the right hand side, $\int_t^{\infty}\frac{1}{r^{\alpha}}\frac{dr}{r} = \tfrac{1}{\alpha}\tfrac{1}{t^{\alpha}}$, we conclude that the right hand side of this final inequality equals a constant multiple of $\mathbb{W}_2(\nu)(x)$.  The lemma follows.
\end{proof}

\section{Riesz systems}\label{riesz}

Throughout this appendix, fix a non-trivial locally finite measure $\mu$.  Recall that

$$\Phi_A^{\mu}(Q) = \Bigl\{\psi\in \Lip_0(B(x_Q, A\ell(Q))) : \|\psi\|_{\Lip}\leq\frac{1}{\ell(Q)}, \int_{\mathbb{R}^d}\psi d\mu=0\Bigl\}.$$

We shall prove that there is a constant $C=C(A)>0$ such that for each $f\in L^2(\mu)$, and arbitrary choices of $\psi_Q\in \Phi_A^{\mu}(Q)$, we have that
$$\sum_{Q\in \mathcal{D}}\frac{|\langle f, \psi_Q\rangle_{\mu}|^2}{\mu(B(x_{Q}, 3A\ell(Q)))}\leq C\|f\|_{L^2(\mu)}^2.
$$
Here, as well as elsewhere in this appendix, the sum over the dyadic cubes is to be taken over those cubes with $\mu(B(x_{Q}, 3A\ell(Q)))>0$.

We shall prove this inequality by verifying its equivalent dual inequality:  there is a constant $C=C(A)>0$ such that for each non-negative sequence $(a_Q)_Q \in \ell^2(\mathcal{D})$, and for every choice of $\psi_Q\in \Phi_A^{\mu}(Q)$, we have that
$$\Bigl\|\sum_{Q\in \mathcal{D}}\frac{a_Q\psi_Q}{\sqrt{\mu(B(x_{Q}, 3A\ell(Q)))}}\Bigl\|_{L^2(\mu)}^2\leq C\|a\|_{\ell^2}^2.
$$

It will be convenient to set $\rho_Q = \mu(B(x_Q, 3A\ell(Q)))$.
We begin the proof with a few preparatory estimates. For each $Q\in \mathcal{D}$, choose $\psi_Q\in \Phi_A^{\mu}(Q)$.  Then
$$\|\psi_Q\|_{\infty}\leq\|\psi_{Q}\|_{\Lip}\cdot \diam(\supp(\psi_Q))\leq CA.
$$
Thus,
$$\|\psi_Q\|_{L^1(\mu)} \leq CA\mu(B(x_Q, A\ell(Q)).
$$
Notice that if $Q', Q''\in \mathcal{D}$ with $\ell(Q')\leq \ell(Q'')$, then the oscillation of $\psi_{Q''}$ on $B(x_{Q'},A\ell(Q'))$ is bounded by $\tfrac{A\ell(Q')}{\ell(Q'')}$.  Thus
$$\frac{|\langle \psi_{Q'}, \psi_{Q''}\rangle_{\mu}|}{\sqrt{\rho_{Q'}\rho_{Q''}}} \leq C(A)\frac{\ell(Q')}{\ell(Q'')} \sqrt{\frac{\mu(B(x_{Q'}, A\ell(Q')))}{\rho_{Q''}}}.$$
Also note that
$|\langle \psi_{Q'}, \psi_{Q''}\rangle_{\mu}|=0$ if $B(x_{Q'}, A\ell(Q'))\cap B(x_{Q''}, A\ell(Q''))=\varnothing$.

For the remainder of this proof, all sums over cubes will be taken over the dyadic lattice $\mathcal{D}$, so we shall not write this explicitly.  Now, let $(a_Q)_{Q\in \mathcal{D}}\in \ell^2(\mathcal{D})$.  Then
$$\Bigl\|\sum_{Q}\frac{a_Q\psi_Q}{\sqrt{\rho_{Q}}}\Bigl\|_{L^2(\mu)}^2
\leq 2\sum_{Q', Q'': \ell(Q')\leq \ell(Q'')}|a_{Q'}||a_{Q''}| \frac{|\langle \psi_{Q'}, \psi_{Q''}\rangle_{\mu}|}{\sqrt{\rho_{Q'}\rho_{Q''}}}.
$$

Appealing to our previous estimates,  Cauchy's inequality yields that $|a_{Q'}||a_{Q''}| \tfrac{|\langle \psi_{Q'}, \psi_{Q''}\rangle_{\mu}|}{\sqrt{\rho_{Q'}\rho_{Q''}}}$ is bounded by 
$$C(A)\Bigl[\frac{|a_{Q'}|^2}{2}  \frac{\ell(Q')}{\ell(Q'')}+ \frac{|a_{Q''}|^2}{2}\frac{\ell(Q')}{\ell(Q'')} \frac{\mu(B(x_{Q'}, A\ell(Q')))}{\rho_{Q''}}\Bigl].
$$
Thus, it suffices to estimate two sums:
$$I= \sum_{\substack{Q', Q'': \ell(Q')\leq \ell(Q'')\\ B(x_{Q'}, A\ell(Q'))\cap B(x_{Q''}, A\ell(Q''))\neq\varnothing}}\!\!\!\!\!\!\!|a_{Q'}|^2 \frac{\ell(Q')}{\ell(Q'')},$$ and
$$II= \sum_{\substack{Q', Q'': \ell(Q')\leq\ell(Q'')\\ B(x_{Q'}, A\ell(Q'))\cap B(x_{Q''}, A\ell(Q''))\neq\varnothing}}\!\!\!\!\!\!\!|a_{Q''}|^2 \frac{\ell(Q')}{\ell(Q'')} \frac{\mu(B(x_{Q'}, A\ell(Q')))}{\rho_{Q''}}.$$
Fix $Q'$ and $k\in \mathbb{Z}_+$.  There are at most $C(A)$ cubes $Q''$ with $\ell(Q'')=2^k\ell(Q')$ satisfying $ B(x_{Q'}, A\ell(Q'))\cap B(x_{Q''}, A\ell(Q''))\neq\varnothing$. Thus
$$I = \sum_{Q'}|a_{Q'}|^2\!\!\!\!\!\!\!\!\!\!\!\!\!\! \sum_{\substack{Q'': \ell(Q')\leq \ell(Q'')\\ B(x_{Q'}, A\ell(Q'))\cap B(x_{Q''}, A\ell(Q''))\neq\varnothing}}\!\!\!\!\!\!\!\!\!\!\frac{\ell(Q')}{\ell(Q'')} \leq C(A) \sum_{Q'}|a_{Q'}|^2\sum_{k\in \mathbb{Z}_+}2^{-k},
$$
which is at most $C(A)\sum_{Q'}|a_{Q'}|^2$.
For $II$, write
$$II = \sum_{Q''}|a_{Q''}|^2 \sum_{k\in \mathbb{Z}_+}2^{-k}\!\!\!\!\!\!\! \sum_{\substack{Q':\ell(Q') = 2^{-k}\ell(Q''))\\ B(x_{Q'}, A\ell(Q'))\cap B(x_{Q''}, A\ell(Q''))\neq\varnothing}} \frac{\mu(B(x_{Q'}, A\ell(Q')))}{\rho_{Q''}}.
$$
With $k\in \mathbb{Z}_+$ fixed, the inner sum can be written as
\begin{equation}\label{innerterm}\frac{1}{\rho_{Q''}}\int\limits_{\mathbb{R}^d}\!\!\!\sum_{\substack{Q':\ell(Q') = 2^{-k}\ell(Q''))\\ B(x_{Q'}, A\ell(Q'))\cap B(x_{Q''}, A\ell(Q''))\neq\varnothing}}\!\!\!\!\!\!\!\!\!\!\!\!\!\!\!\!\!\!\!\!\!\!\!\chi\ci{B(x_{Q'}, A\ell(Q'))}(y)d\mu(y).
\end{equation}
Note that if $B(x_{Q'}, A\ell(Q'))\cap B(x_{Q''}, A\ell(Q''))\neq\varnothing$, then $B(x_{Q'}, A\ell(Q'))\subset B(x_{Q''}, 3A\ell(Q''))$.  Thus, the domain of integration in the above integral  may be restricted to $B(x_{Q''}, 3A\ell(Q''))$.   On the other hand, any point $y\in B(x_{Q''}, 3A\ell(Q''))$ lies in at most $C(A)$ distinct balls $B(x_{Q'}, A\ell(Q'))$ corresponding to the cubes $Q'\in \mathcal{D}$ with $\ell(Q') = 2^{-k}\ell(Q'')$.  Consequently, the integrand is bounded by $C(A)$.  Therefore, the quantity in display (\ref{innerterm}) is bounded by $\tfrac{1}{\rho_{Q''}}C(A)\rho_{Q''}\leq C(A)$.  This yields $II\leq C(A)\sum_{Q''}|a_{Q''}|^2$, completing the proof.

\section{The Representation of the Fractional Laplacian: The proof of Lemma \ref{sharmon}}

In this section we shall prove Lemma \ref{sharmon}.  Set $K(x) = \tfrac{x}{|x|^{s+1}}, x\in \mathbb{R}^d$ with $s\in (d-1,d)$.  Fix a $\Lambda$-nice measure $\mu$ with $\dist(0, \supp(\mu))=1$.  The proof is a rather tiresome approximation argument, based around the  formula (see, for example \cite{Lan},\cite{ENV}): if $g\in C^{\infty}_{0}(\mathbb{R}^d)$,
$$
\nabla g(x) = b P.V. \int_{\mathbb{R}^d}\frac{T(gm_d)(x)- T(gm_d)(y)}{|x-y|^{2d+1-s}}dm_d(y),
$$
where $b\in \mathbb{R}\backslash \{0\}$, and $T(gm_d)(x) = \int_{\mathbb{R}^d}K(x-y)d\mu(y)$.

Fix a mollifier $\psi\in C^{\infty}_0(B(0,1))$ satisfying $\int_{\mathbb{R}^d} \psi dm_d=1$.  For $\rho>0$, set $\psi_{\rho}=\rho^{-n}\psi(\rho\,\cdot\,)$.  If $\rho\in (0,\tfrac{1}{4})$ and $N>0$ are given, then set $\mu_{\rho, N}  =\psi_\rho*(\mu\chi_{B(0,N)})$.  Notice that $\mu_{\rho, N}$ is a measure with $C^{\infty}_0(\mathbb{R}^d)$ density $g_{\rho, N}$ with respect to $m_d$, and since $\dist(0, \supp(\mu))=1$, $\nabla g_{\rho, N}(0)=0$.

Thus
$$0 = P.V. \int_{\mathbb{R}^d} \frac{T(\mu_{\rho, N})(0) - T(\mu_{\rho, N})(x)}{|x|^{2d+1-s}}dm_d(x).
$$

Our strategy is clear:  let $N\rightarrow \infty$, $\rho \rightarrow 0$, and argue that the right hand side converges to
$$P.V. \int_{\mathbb{R}^d} \frac{\overline{T}_{\mu}(1)(x)  - \overline{T}_{\mu}(1)(0)}{|x|^{2d+1-s}} dm_d(x).
$$

In order to pass to the limit in this fashion, we shall require some preparatory estimates.  Suppose that $\nu$ is a $\Lambda'$-nice measure with $\dist(0, \supp(\nu))\geq \tfrac{1}{2}$.  Notice that, for ever $x'\in B(0, \tfrac{1}{4})$, and $m_d$-almost every $x\in \mathbb{R}^d$,
\begin{equation}\label{diffeq}\T1_{\nu}(1)(x) - \T1_{\nu}(1)(x') = \int_{\mathbb{R}^d} [K(y-x)-K(y-x')]d\nu(y).
\end{equation}
Also note that $|x-\cdot|^{-\beta}\in L^1(\nu)$ whenever $x\in B(0, \tfrac{1}{4})$ and $\beta>s$.  In particular, this implies that for any multi-index $\gamma$ with $|\gamma|\geq 1$,
$$|D^{\gamma}\T1_{\mu}(1)(x)| = \Bigl|\int_{\mathbb{R}^d}[D^{\gamma}K(y-x)]d\nu(y)\Bigl|\leq C(\gamma,\Lambda'),
$$
for any $x\in B(0,\tfrac{1}{4})$.  In concert with the elementary inequality
$$\Bigl|\int_{\partial B(0,1)}[f(ry) - f(0)] dS(y)\Bigl|\leq Cr^2\sup_{B(0,r)}|\Delta f|,
$$
valid for $f\in C^2(\overline{B(0,r)})$ (here $S$ is the surface measure on the unit sphere), we see that if $r<\tfrac{1}{4}$ and $\tau>0$
\begin{equation}\begin{split}\label{smoothnearzero}\Bigl|&\int_{B(0,r)\backslash B(0,\tau)} \frac{\T1_{\nu}(0) - \T1_{\nu}(x)}{|x|^{2d+1-s}}dm_d(x)\Bigl|\\
&=c\Bigl|\int_{\tau}^r\frac{1}{t^{2d+1-s}}\int_{\partial B(0,1)}[\T1_{\nu}(1)(0)-\T1_{\nu}(1)(x)] dS(x) t^{d-1}dt\Bigl|\\
&\leq C(\Lambda')r^{1+s-d}.
\end{split}\end{equation}

Next, note that if $A>1$, then
\begin{equation}\label{avenotbig}
\int_{B(0,A)}|\T1_{\nu}(1)(0)-\T1_{\nu}(1)(x)|dm_d(x)\leq C(\Lambda')A^d\log(e+A).
\end{equation}
To see this, note that the left hand side is dominated by the sum
\begin{equation}\begin{split}\nonumber\int_{B(0,A)}&\Bigl|\int_{B(0,2A)}[K(y)-K(y-x)]d\nu(y)\Bigl|dm_d(x) \\
&+ \int_{B(0,A)}\Bigl|\int_{\mathbb{R}^d\backslash B(0,2A)}[K(y)-K(y-x)]d\nu(y)\Bigl|dm_d(x)
\end{split}\end{equation}
Let us call the two terms appearing here $I$ and $II$.

We estimate term $I$ by $\int_{B(0,A)}\int_{B(0,2A)}[|K(y)|+|K(y-x)|] d\mu(y)d\mu(x)$, which is bounded by
$$\int_{B(0,A)}\int_{B(0,2A)}\frac{1}{|y-x|^s}d\nu(y)dm_d(x) + CA^d\int_{B(0,A)\backslash B(0, \tfrac{1}{4})}\frac{1}{|y|^s}d\nu(y).
$$
But the sum of these two integrals is easily seen to at most $C\nu(B(0, 2A)) A^{d-s}+ CA^d\log(e+A)\leq CA^d\log(e+A)$.  For term $II$, merely note that
$$\int_{\mathbb{R}^d\backslash B(0,2A)}|K(y)-K(y-x)|d\nu(y)\leq \int_{\mathbb{R}^d\backslash B(0,2A)}\frac{|x|}{|y|^{s+1}}d\nu(y)\leq C(\Lambda')
$$
for $x\in B(0, A)$.  Bringing these estimates together yields the inequality (\ref{avenotbig}).

Integrating the estimate (\ref{avenotbig}) yields that

\begin{equation}\begin{split}\label{tailsmallest}\int_{\mathbb{R}^d\backslash B(0,A)}&\frac{|\T1_{\nu}(1)(0)-\T1_{\nu}(1)(x)|}{|x|^{2d+1-s}}dm_d(x)\\
&\leq C(\Lambda')\int_A^{\infty}\frac{1}{Q^{2d+1-s}} Q^d\log(e+Q) \frac{dQ}{Q}\leq \frac{C(\Lambda')}{A^{d-s}}.
\end{split}\end{equation}

We are now ready to proceed with the limiting procedure.  Notice that if $\rho<\tfrac{1}{4}$ and $N>0$, then $\mu_{N, \rho}$ is a $\Lambda'=C(\Lambda,d)$-nice measure, with $\dist(0, \supp(\mu_{N, \rho}))\geq \tfrac{1}{2}$.

Let $\eps>0$.  Then (\ref{smoothnearzero}) yields the existence of $r>0$ such that
\begin{equation}\begin{split}\Bigl|P.V. \int_{B(0,r)} &\frac{\T1_{\mu_{N,\rho}}(0) - \T1_{\mu_{N,\rho}}(x)}{|x|^{2d+1-s}}dm_d(x)\Bigl|\\
& + \Bigl|P.V. \int_{B(0,r)} \frac{\T1_{\mu}(0) - \T1_{\mu}(x)}{|x|^{2d+1-s}}dm_d(x)\Bigl|< \eps
\end{split}\end{equation}
for all $\rho \in (0, \tfrac{1}{4})$ and $N>1$.  In addition, the estimate (\ref{avenotbig}) ensures that there exists $A>0$ such that
\begin{equation}\begin{split}\int_{\mathbb{R}^d\backslash B(0,A)}&\frac{|\T1_{\mu_{\rho,N}}(1)(0)-\T1_{\mu_{N,\rho}}(1)(x)|}{|x|^{2d+1-s}}dm_d(x)\\
&+ \int_{\mathbb{R}^d\backslash A}\frac{|\T1_{\mu}(1)(0)-\T1_{\mu}(1)(x)|}{|x|^{2d+1-s}}dm_d(x)\leq \eps
\end{split}\end{equation}
for all $\rho \in (0, \tfrac{1}{4})$ and $N>1$.

Next, note that if $N>2A$, and $x\in B(0,A)$, then
\begin{equation}\begin{split}\nonumber|[\T1_{\mu}&(1)(x) - \T1_{\mu}(1)(0)] -[\T1_{\chi_{B(0,N)}\mu}(1)(x)-\T1_{\chi_{B(0,N)}\mu}(1)(0)]|\\
&=\Bigl|\int_{\mathbb{R}^d\backslash B(0,N)}[K(y-x)-K(y)]d\mu(y)\Bigl|\leq C(\Lambda)\frac{A}{N}.
\end{split}\end{equation}

So now fix $N$ so that
\begin{equation}\begin{split}\nonumber\int_{B(0,A)}&\Bigl|[\T1_{\mu}(1)(x) - \T1_{\mu}(1)(0)] \\
&-[\T1_{\chi_{B(0,N)}\mu}(1)(x)-\T1_{\chi_{B(0,N)}\mu}(1)(0)]\Bigl|dm_d(x)\leq \eps r^{2d+1-s}.
\end{split}\end{equation}
But, with $r, A$ and $N$ fixed, note that
\begin{equation}\begin{split}\nonumber\int_{B(0,A)}&\bigl|[\T1_{\mu_{\rho,N}}(1)(x) - \T1_{\mu_{\rho,N}}(1)(0)] \\
&-[\T1_{\chi_{B(0,N)}\mu}(1)(x)-\T1_{\chi_{B(0,N)}\mu}(1)(0)]\bigl|dm_d(x)\\
& =\int_{B(0,A)}\Bigl|\rho_{\rho}*\Bigl[\int_{B(0,N)}K(y-\cdot)d\mu(y)\Bigl](x) - \int_{B(0,N)}K(y-x)d\mu(y)\\
& - \rho_{\rho}*\Bigl[\int_{B(0,N)}K(y-\cdot)d\mu(y)\Bigl](0) + \int_{B(0,N)}K(y)d\mu(y)\Bigl|dm_d(x)
\end{split}\end{equation}

But notice that, as $\rho\rightarrow 0$,
$$\int_{B(0,A)}\Bigl|\rho_{\rho}*\Bigl[\int_{B(0,N)}K(y-\cdot)d\mu(y)\Bigl]-\int_{B(0,N)}K(y-x)d\mu(y)\Bigl|dm_d(x)
$$
converges to $0$, by standard theory $\bigl(\int_{B(0,N)}K(y-\cdot)d\mu(y)$ lies in $L^1(\chi_{B(0,A)}m_d)\bigl)$, while clearly
$$ \rho_{\rho}*\Bigl[\int_{B(0,N)}K(y-\cdot)d\mu(y)\Bigl](0) - \int_{B(0,N)}K(y)d\mu(y)
$$
converges to zero with $\rho$.

Finally, it remains to notice that  $$T(\mu_{\rho, N})(0) - T(\mu_{\rho, N})(x) =  \T1_{\mu_{\rho, N}}(1)(x) - \T1_{\mu_{\rho, N}}(1)(0),$$ and the triangle inequality yields
$$\Bigl|P.V. \int_{\mathbb{R}^d} \frac{\overline{T}_{\mu}(1)(0)  - \overline{T}_{\mu}(1)(x)}{|x|^{2d+1-s}} dm_d(x)\Bigl|\leq 5\eps.
$$

 \end{document}